\numberwithin{equation}{section}
\theoremstyle{plain}
\newtheorem{theorem}{Theorem}
\newtheorem{lemma}{Lemma}
\newtheorem{proposition}{Proposition}
\newtheorem{cor}{Corollary}
\theoremstyle{remark}
\theoremstyle{remark}
\newtheorem{fact}{Fact}[section]
\newtheorem{definition}{Definition}
\newtheorem{remark}{Remark}
\newcommand{\cond}{\textbf{C}\!\!}
\newcommand{\nset}{\mathbb{N}}
\newcommand{\rset}{\mathbb{R}}
\newcommand{\ind}{\mathds{1}}
\newcommand{\un}{\ind}
\newcommand{\PP}[1][]{\ifthenelse{\equal{#1}{}}{\ensuremath{\mathbb{P}}}{\ensuremath{\mathbb{P}\left( #1 \right) }}}
\newcommand{\EE}[1][]{\ifthenelse{\equal{#1}{}}{\ensuremath{\mathbb E}}{\ensuremath{{\mathbb E}\left[ #1 \right]}}}
\newcommand{\esp}{\EE} 
\newcommand{\Var}[1]{\mathbb{V}\mathrm{ar}\left[ #1 \right]}
\newcommand{\Cov}[1]{\mathbb{C}\mathrm{ov}\left[ #1 \right]}
\DeclareMathOperator*{\argmin}{arg\,min}
\DeclareMathOperator{\Span}{span}
\DeclareMathOperator{\Vector}{vec}
\newcommand\ie{\emph{i.e.}\xspace}
\newcommand\iid{\ensuremath{\mathit{i.i.d.}}\xspace}
\newcommand\cdf{\emph{c.d.f.}\xspace}
\newcommand{\ud}{\,\mathrm{d}}
\newcommand{\point}{\,\cdot\,}
\newcommand{\given}[1][{}]{\;\middle\vert\;{#1} }
\def\indep{\perp\!\!\!\!\perp}
\newcommand{\supy}{{y^+}}
\newcommand{\tailindep}[3]{\left. {#1}_\infty {\indep} {#2} \given {#3} \right.}
\newcommand{\TCI}{\textsc{TCI}\xspace}
\newcommand{\TCIG}{\textsc{TCI-G}\xspace}
\newcommand{\AMR}{\mathcal{R}_{\textrm{AM}}}
\newcommand{\AMl}{\ell_{\textrm{AM}}}
\begin{document}

\begin{frontmatter}

  \title{Tail Inverse Regression:  dimension reduction for prediction of extremes}
\runtitle{Tail Inverse Regression for Extremes}
	\begin{aug}

\author[A]{\fnms{Anass} \snm{Aghbalou}\ead[label=e1]{anass.aghbalou@telecom-paris.fr}}
\author[C]{\fnms{Fran\c{c}ois} \snm{Portier}\ead[label=e2]{francois.portier@gmail.com}}
\author[D]{\fnms{Anne} \snm{Sabourin}\ead[label=e3]{anne.sabourin@u-paris.fr}}
\author[B]{\fnms{Chen} \snm{Zhou}\ead[label=e4]{zhou@ese.eur.nl}}
\runauthor{A. Aghbalou, F. Portier, A. Sabourin, C. Zhou}

\address[A]{LTCI, T\'el\'ecom Paris, Institut polytechnique de Paris, France.\printead[presep={,\ }]{e1}}
\address[C]{Ensai, CREST - UMR 9194, Rennes, France\printead[presep={,\ }]{e2}}
\address[D]{Université Paris Cité, CNRS, MAP5, F-75006 Paris, France\printead[presep={,\ }]{e3}}
\address[B]{Erasmus University Rotterdam, Rotterdam, Netherlands;   Tinbergen Institute, Rotterdam, Netherlands\printead[presep={,\ }]{e4}}
\end{aug}




\begin{abstract}
  We consider the problem of supervised dimension reduction with a
  particular focus on extreme values of the target $Y\in\rset$ to be
  explained by a covariate vector $X \in \rset^p$. 
    The general
    purpose is to define and estimate a projection on a lower
    dimensional subspace of the covariate space which is sufficient
    for predicting exceedances of the target above high thresholds.
    We propose an original definition of Tail Conditional Independence
    which matches this purpose.  Inspired by  Sliced Inverse
    Regression (SIR) methods, we develop a novel framework (TIREX,
    Tail Inverse Regression for EXtreme response) in order to estimate
    an extreme sufficient dimension reduction (SDR) space of
    potentially smaller dimension than that of a classical SDR space.%
   We prove the weak convergence of tail empirical processes involved
  in the estimation procedure and we illustrate the relevance of the
  proposed approach on simulated and real world data.
\end{abstract}

\begin{keyword}[class=MSC2020]
\kwd[Primary ]{62G32}
\kwd{62H25}
\kwd[; secondary ]{62G08}
\kwd{62G30}
\end{keyword}
\begin{keyword}
  \kwd{Dimension reduction}
  \kwd{Empirical processes}
  \kwd{Extreme events}
  \kwd{Inverse regression}
\kwd{Supervised learning}
\end{keyword}

\end{frontmatter}


\section{Introduction}
\label{sec:intro}

Dimension reduction is a crucial matter in supervised learning
problems where the goal is to predict a \emph{dependent variable}
$Y\in\rset$ or summaries of it, when the dimension $p$ of the
\emph{covariate vector} $X\in\rset^p$ is large.  {
  In this paper we consider dimension reduction for prediction of tail
  events, by which we mean events of the kind $\{Y>y\}$, for
  arbitrarily large values of $y$.  This stylized statistical problem
  relates to a wide range of practical applications such as supervised
  anomaly detection, system monitoring with a large number of sensors,
  prediction of extreme weather conditions or financial risk
  management. For instance, in financial risk management, a typical
  concern is to identify risk factors, which will be further used to
  explain extreme events such as financial market crashes, see
  \emph{e.g.}~\cite{fama1993common,fama2015five}.  Risk factors are
  often lower dimensional functionals based on a large number of stock
  returns. Identifying such risk factors that can predict financial
  market crashes is therefore an example of dimension reduction for
  the problem of predicting tail events.  }
   
Our focus on extreme values connects our work with the field of
Extreme Value Theory (EVT) which has been successfully applied to
model tail events with potentially catastrophic impact.  Statistical
inference in this framework is performed using the most extreme
realizations of the random variable under consideration.  We refer the
interested reader to the monographs
\cite{beirlant2006statistics,dehaan+f:2007,resnick2013extreme,resnick2007heavy}.
Notice that the curse of dimensionality is particularly troublesome in
extreme value analysis where only a small fraction of the data,
reflected by the low probability $\PP[Y>y]$, is used for inference.
Before proceeding further we remark that the method proposed in this
study, although motivated by and formulated in an EVT framework, does
not rely on the minimal assumptions typically required in EVT such as
a power law decay.  It is in fact a local method related to any small
range of $Y$ and as such, it could be easily adapted to tackle the
problem of dimension reduction for prediction of $Y$ within low
probability regions of other shapes.  However in view of the
importance of applications towards risk management, we concentrate on
this specific tail region.\\~

\noindent {\bf Dimension reduction in EVT.} The subject of dimension
reduction for extremes has inspired numerous recent works. The vast
majority of them are devoted to the unsupervised setting, \ie
analyzing the extremes of a high dimensional random vector. Such
studies can be divided into the following categories: clustering
methods (\cite{chautru2015dimension,chiapino2019vizu,janssen2019k}),
support identification,
(\cite{goix2016sparse,goix2017sparse,chiapino2016feature,chiapino2019identifying,simpson2020determining,meyer2019sparse}),
Principal Component Analysis of the angular component of extremes
(\cite{cooley2016decompositions,jiang2020principal,drees2021principal}),
and graphical models for extremes
(\cite{hitz2016one,engelke2020graphical,asenova2021inference}); see
also \cite{engelke2020sparse} and the references therein.

By contrast, our approach takes place in the supervised setting.  Our
main informal assumption is that \emph{ a low dimensional orthogonal
  projection $PX$ is sufficient for {
    predicting extreme values of $Y$}}. In other words the extreme
values of $Y$ can be entirely explained by a limited number of linear
combinations of the components of $X$. {
  In this setting, the only existing works are, to our best knowledge,
  \cite{gardes2018tail} and \cite{bousebata2021extreme}.  In
  \cite{gardes2018tail}, the informal assumption emphasized above is
  made precise by a specific notion of \emph{tail conditional
    independence}, reported in Equation~\eqref{eq:defGardes}
  below. Dimension reduction is considered under this
  condition. \cite{gardes2018tail} demonstrates the usefulness of such
  a reduction for statistical estimation of large conditional
  quantiles. Even though we follow in the footsteps
  of~\cite{gardes2018tail} in terms of informal goal, our framework
  differs significantly from \cite{gardes2018tail}'s on several key
  aspects. First, the specific definition of tail conditional
  independence that we propose (See
  Definition~\ref{def:tail_cond_indep} in
  Section~\ref{sec:tail-CI-central space}) is not equivalent to
  \cite{gardes2018tail}'s
  condition~\eqref{eq:defGardes}.
  We carry out  an in-depth
  comparison of both conditions  and we show that neither one of them
  implies the other, in Section~\ref{sec:compareGardes_long} from the supplementary material.  Second, our assumption is motivated by a
  downstream task (predicting the occurrence of a tail event) which is
  different from, although related to the one motivating
  \cite{gardes2018tail} (estimation of extreme conditional quantiles).
  Third, the statistical guarantees brought by \cite{gardes2018tail}
  are obtained under the assumption that the dimension reduction space
  is already known.  In the cited reference an estimation method is
  indeed proposed for the dimension reduction space, however its
  statistical properties are only analyzed \emph{via} simulations.
  Instead, we bring statistical guarantees regarding the estimation of
  a sufficient projection subspace itself.  We discuss qualitatively
  the positive impact it may have for prediction of tail events in
  Remark~\ref{rem:TCI_AMrisk}.  Lastly, the computational cost of
  TIREX depends only polynomially on the ambient dimension $p$, which
  is not the case with the current estimation method in
  \cite{gardes2018tail}, as discussed in
  Section~\ref{sec:experiments}.  }
 
Another  study related to our work is the recently published
paper \cite{bousebata2021extreme}, where the authors adopt a
partial least square strategy to uncover the relation between linear
combinations of covariates and the extreme values of the target. Their
model assumptions differ from ours substantially: the inverse
regression model assumed in \cite{bousebata2021extreme}
implies a single-index relationship
between extreme values of the response and the covariates. In
addition, the model requires regular variation of the dependent variable $Y$
and of the link function.  Lastly, the model relies on finite variance of $Y$. 
In contrast, our approach is somewhat `free' from most
restrictions on the distribution of $(X,Y)$ except from the well-known linearity
condition and constant variance condition,
typically needed for SIR. Such conditions concern only the distribution of the
covariates.  Since we do not impose regular variation, 
we can handle not only
thin-tailed but also extremely heavy-tailed dependent variables with no finite variance or even mean.\\~

\noindent{\bf Sufficient Dimension Reduction and inverse methods.}
The underlying assumption of a sufficient linear projection subspace
has been formalized under the notion of Sufficient Dimension Reduction
(SDR) space (\cite{cook2009regression}). Many classical approaches to
supervised dimension reduction rely on a linear regression model
between $X$ and $Y$. This is the case \emph{e.g.} for Principal
component regression (\cite{hotelling1957relations}), Partial least
squares (\cite{wold1966estimation}), Canonical correlation analysis
(\cite{thompson1984canonical}) or penalized methods with sparsity
inducing regularization such as the Lasso
(\cite{jenatton2011structured}).  Differently, \emph{SDR} builds upon
a \textit{linear dimension reduction} assumption: only a small number
of \textit{linear} combinations of covariates is useful for predicting
the dependent variable. In other words, there exists a linear subspace
$E$ (a SDR) of a moderate dimension $d\leq p$, such that
\begin{align}\label{def:sdr_space}
  \PP[Y\le t \given X ] = \mathbb P(Y\leq t |PX), \qquad \forall t\in\rset, \quad \text{almost surely}, 
\end{align}
where $P$ is the orthogonal projector on $E$, \ie $Y$ depends on $X$
only through $PX\in \mathbb R^d$. This framework relies heavily on the
notion of conditional independence
\cite{dawid:1979,constantinou+d:2017}: Condition \eqref{def:sdr_space}
characterizes the fact that $Y$ is conditionally independent from $X$
given $PX$. One major advantage of this approach is that it strikes a
balance between interpretability of the dimension reduction based on
linear operations and flexibility of the generative model -- no
assumption is made regarding the dependence structure between $PX$ and
$Y$.

Under the assumption that there exists a non trivial subspace $E$ such
that~\eqref{def:sdr_space} holds, a natural idea is to estimate such a
subspace first, and then use only the variable $PX$ to predict $Y$,
thus reducing the dimensionality of the regression problem.  The
estimation problem based on SDR can also be viewed as a specific case
of semi-parametric M-estimation
(\cite{delecroix2006semiparametric}). Alternatively, one may consider
derivative based methods, relying on the fact that the gradient of the
regression curve belongs to $E$
(\cite{hardle1989investigating,hristache2001structure,xia2007constructive,dalalyan2008new}). Recently,
the framework of Reproducing Kernel Hilbert Spaces (RKHS) has been
employed to estimate SDR spaces by means of covariance operators
(\cite{fukumizu2004dimensionality,fukumizu2009kernel}).

The family of methods to which our work relates most is the inverse
regression paradigm initiated by \cite{li:1991}, including the Sliced
Inverse Regression (SIR) strategy and its second order variant Sliced
Average Variance Estimate (SAVE) (\cite{cook+w:1991}).  The main idea
underlying these methods is that under appropriate assumptions the
inverse regression curve $\esp[X| Y]$ and its second moment variant --
the columns of the conditional covariance matrix $\Var{X|Y}$ -- almost
surely belong to the minimal SDR. Cumulative slicing estimation
(CUME), proposed in \cite{zhu+z+f:2010} and further analyzed in
\cite{portier:2016}, 
aims at recovering the largest possible subspace of the minimal
SDR. It is achieved by estimating the conditional expectation and
variance of $X$, conditioning on `slices' of the target $Y$, in the
form of $\un\{Y<y\}$, and then aggregating such conditional
expectations and variances by integration with respect to $y$.

A well-known restriction of the SIR strategy is that it relies on a
so-called \emph{linearity condition} (LC) regarding the covariates,
namely equation~\eqref{eq:LC} in the next section, see
\cite{hall:1993} for a justification.  The required condition is
satisfied in particular if the covariates form an elliptical random
vector or are independent
(\cite{cook2009regression,eaton1986characterization}). There are
various extensions of SIR permitting to overcome this
restriction. Using RKHS, it has been proposed to transform the data in
a way that LC is approximately satisfied
(\cite{wu2008kernel,yeh2008nonlinear}).  Another possibility allowing
to depart from elliptical covariates is to apply the SIR methodology
and its higher order variants to score functions of the explanatory
variables (\cite{babichev2018slice}).  Finally, the high dimensional
case $p>n$ calls for regularization methods which permit in addition
to perform feature selection (\cite{li+y:2008}). All these extensions
are out of the scope of the present paper, in which we restrict
ourselves to the original SIR and SAVE methods, thus leaving room for
several improvement in further works. For estimation purposes we
consider a variant of CUME.\\~

\noindent{\bf Contributions and outline.} {
  Our contributions are twofold.  First, we develop in
  Section~\ref{sec:tail-CI-central space} a modified version of
  \cite{gardes2018tail}'s probabilistic setting regarding tail
  conditional independence.  In particular we explain in
  Remark~\ref{rem:TCI_AMrisk} the relevance of our definition for the
  purpose of predicting tail events and its connections to the
  statistical learning framework of imbalanced classification.  We
  discuss thoroughly the distinctions between the two alternative
  definitions for tail conditional independence in
  Section~\ref{sec:compareGardes_long} where we also provide examples
  of models satisfying one or the other.  Second, we show in
  Section~\ref{sec:tail-SIR} that our definition permits to extend
  inverse regression principles and methods to this extreme values
  setting (theorems~\ref{th:SIR_extreme},~\ref{th:SAVEx}).  We derive
  an asymptotic analysis for our proposed estimation strategy TIREX
  stemming from inverse regression, using specific tools from the
  theory of empirical processes
  (Section~\ref{sec:estimationWithUniform}).  We illustrate the finite
  sample performance of TIREX with simulated and real world data sets
  in Section~\ref{sec:experiments}, in particular we demonstrate
  empirically the usefulness of TIREX for tail events prediction.  The
  code developed for TIREX is available
  online \footnote{\url{https://github.com/anassag/TIREX}}
 and some technical proofs and additional comments are deferred to the
 supplementary material.

 We start-off in
Section~\ref{sec:background-SIR} by recalling the necessary background
regarding conditional independence of random variables, SDR spaces,
and inverse regression.  }
 

\section{Background: dimension reduction space and Sliced Inverse Regression}\label{sec:background-SIR}

Conditional independence of  random variables $Y$ and $V$ given $W$ is defined \emph{e.g.} in  \cite{constantinou+d:2017} as follows: the conditional distribution of $Y$ given $(V,W)$ is the same as the conditional distribution of $Y$ given $W$, almost surely. Several  characterizations are recalled below, the equivalence of which  are proved in \cite{constantinou+d:2017}, Proposition~2.3. 
\begin{definition}[conditional independence]\label{def:conditional_indep}
  Let $Y,V,W$ be random variables defined on a probability space
  $(\Omega, \mathcal F , \mathbb P)$ {
    and taking values in arbitrary measure spaces}.  The variables $Y$
  and $V$ are called conditionally independent given $W$, a property
  denoted by $Y\indep V \;|\; W$, if the equivalent conditions below
  are satisfied.
  \begin{enumerate}
  \item [(CI-1)]  For all $A_Y \in \sigma(Y)$,  $\PP[Y \in A \given V,W] = \PP[Y \in A\given W]$, almost surely.
  \item [(CI-2)] 
    For all real-valued functions $f $ and $ g$, measurable and bounded, 
   \begin{align*}
\EE[f(Y) g(V) \given W] = \EE[f(Y)  \given W] \EE[ g(V) \given W]  ,\quad \text{a.s.} 
    \end{align*}
  \item [(CI-3)]For any real-valued function $g$, measurable and bounded, 
 \begin{align*}
              \EE[ g(V) \given Y,W ] = \EE[g(V)  \given W] , \quad \text{a.s.}
         \end{align*}
  \end{enumerate}
\end{definition}

  Notice that the existence of regular versions of conditional
  probability distributions is not required in
  Definition~\ref{def:conditional_indep}. However in this paper, $Y$
  is real valued, thus the existence of such a regular version for the
  conditional distribution of $Y$ given $(V,W)$ is granted. As a
  consequence we may write, without additional precautions,
  expressions of the kind `$\PP[Y\in A \given V=v,W=w]$'. The latter
  quantity is defined as the value of the conditional probability
  kernel at point $( (v,w), A
  )$.

 In the context of supervised dimension reduction, we consider $V=X$
 and search for a projection $W = PX$ of $X$ on a lower dimensional
 subspace $E$ satisfying the above conditions.  We assume for
 simplicity that the covariance matrix $\Sigma = \Cov{X}$ is
 invertible and for ease of presentation we introduce a standardized
 covariate vector $Z = \Sigma^{-1/2}(X - m)$ where $m = \EE[X]$.  We
 consider in the remaining of this paper the problem of regressing $Y$
 on $Z$, which amounts to assuming that both $m$ and $\Sigma$ are
 known, so that the vector $Z$ is observed.  Thus $\Var{Z} = I_p$ and
 $\esp[Z] = 0$.  A SDR space (\cite{cook2009regression,cook+n:2005})
 is a subspace $E$ of $\rset^p$ such that
 $\left. Y\indep Z\given P Z \right.$ where $P$ is the orthogonal
 projection on $E$, which is equivalent to
 condition~\eqref{def:sdr_space} in the introduction.  Our results
 easily extend to general covariates $X$ (see \emph{e.g.}
 \cite{cook+w:1991}) at the price of an additional notational burden,
 see Section~\ref{sec:nonstandard} in the supplementary material.
 Notice already that in terms of non-standardized covariates $X$, a
 subspace $\tilde E$ of $\rset^p$ with associated orthogonal projector
 $\tilde P$ is a SDR space for the pair $(X,Y)$ if and only if
 $\tilde E = \Sigma^{-1/2} E$ where $E$ is a SDR space for $Z$.

 A central space is a  SDR  subspace $E_c$ for the pair $(Z,Y)$ of minimal dimension. In our context of finite dimensional covariates  a central space always exists since the ambient space $\rset^p$ itself is a SDR space. Uniqueness is not guaranteed in general but holds true under mild assumptions ensuring that an intersection of SDR spaces is a SDR space (see \emph{e.g.} \cite{portier+d:2013}, Theorem~1). In such a case one may refer without ambiguity to \emph{the} central space.

 First and second order inverse methods, respectively named SIR
 (\cite{li:1991}) and SAVE (\cite{cook+w:1991}) are two of many
 methods to estimate SDR spaces. Both rely on the fact that under
 appropriate assumptions detailed below, first and second moments of
 the covariate vector, conditioning upon the target, belong to a SDR
 space. In the sequel, $E$ is a SDR space, and $P$ 
 denotes 
 the orthogonal projection on $E$.  Then $Q = I_p-P$ is the orthogonal
 projection on $E^\perp$, the orthogonal complement of $E$.  The
 required conditions are the Linearity Condition (LC):
\begin{equation}
  \label{eq:LC}
  \EE[Z  \given  PZ] = PZ \quad \text{a.s.}
\end{equation}
and the additional Constant Conditional Variance  (CCV), 
\begin{equation}
  \label{eq:CCV}
  \Var{Z|PZ } \text{ is constant} \quad \text{a.s.}
\end{equation}
Under both LC and CCV, we have that
$ \EE [\Var{Z|PZ }] = \EE [ZZ^T] - \EE [PZ(PZ)^T] = I_p- P$ and
therefore the constant matrix in \eqref{eq:CCV} is necessarily the
projection $Q = I_p-P$ on the orthogonal complement of $E$.

Notice that LC and CCV depend on an unknown SDR space. Assuming that
LC holds for all orthogonal projectors is in fact equivalent to
assuming that the covariate vector $Z$ is spherically symmetric, \ie
$Z = \rho U$ where $\rho\indep U$, $\rho$ is a non negative random
variable and $U$ is uniformly distributed over the unit sphere of
$\mathbb R^p$, as proved in \cite{eaton1986characterization}. Among
spherical variables with finite second moment, CCV is equivalent to
being Gaussian (\citep[Theorem 4.1.4]{bryc2012normal}).

The following proposition in \cite{li:1991} encapsulates the main idea of SIR. We give below the (classical) proof for the sake of completeness.   
\begin{proposition}[SIR principle]\label{prop:sir_principle} 
  If $E$ is an SDR space for which LC~\eqref{eq:LC} is satisfied, then $Q ( \EE[Z  | Y]) = 0$ a.s., 
    that is, $\EE[Z | Y]  \in  E$ a.s.
  \end{proposition}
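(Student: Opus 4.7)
The plan is to chain three simple facts: the tower property for conditional expectations, the SDR conditional independence encoded by characterization (CI-3) of Definition~\ref{def:conditional_indep}, and the linearity condition~\eqref{eq:LC}.

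First I would write, by the tower property applied to the nested sigma-algebras $\sigma(Y)\subset \sigma(Y,PZ)$,
\begin{equation*}
\EE[Z \given Y] = \EE\bigl[\,\EE[Z \given Y, PZ]\,\bigm|\, Y\bigr], \quad \text{a.s.}
\end{equation*}
Next, since $E$ is an SDR space, by definition $Y\indep Z \given PZ$. Applying (CI-3) componentwise with $V=Z$, $W=PZ$, and $g$ ranging over coordinate projections (extending from bounded to integrable functions by the usual truncation argument, using that $\EE\|Z\|<\infty$ as $\Var{Z}=I_p$), one obtains
\begin{equation*}
\EE[Z \given Y, PZ] = \EE[Z \given PZ], \quad \text{a.s.}
\end{equation*}
The linearity condition~\eqref{eq:LC} then gives $\EE[Z \given PZ] = PZ$ a.s., so that combining the three displays,
\begin{equation*}
\EE[Z \given Y] = \EE[PZ \given Y] = P\,\EE[Z \given Y], \quad \text{a.s.}
\end{equation*}
Applying $Q = I_p - P$ to both sides yields $Q\,\EE[Z\given Y] = 0$ a.s., which is the claimed conclusion.

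I do not anticipate any serious obstacle: the argument is a textbook three-line manipulation. The only minor technical point is the extension of (CI-3) from bounded measurable $g$ to the (unbounded) identity map, which is handled by truncation together with the integrability of $Z$ guaranteed by $\Var{Z}=I_p$. Everything else is a direct application of the definitions and hypotheses given in the excerpt.
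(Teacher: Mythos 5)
Your proof is correct and follows exactly the same route as the paper's: tower property, conditional independence to replace $\EE[Z\given Y,PZ]$ by $\EE[Z\given PZ]$, then the linearity condition and projection. Your extra remark on extending (CI-3) from bounded $g$ to the identity via truncation is a small but legitimate refinement that the paper's proof passes over silently.
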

  \begin{proof}
    By the tower rule from conditional expectation,
    \begin{align*}
      \EE[Z\given Y] & =
                       \EE[\EE\big(Z \,|\, Y, PZ\big) \given Y ]
                       = \EE[\EE( Z \,|\, PA) \given Y)] \\
                     & = \EE[PZ 
                       \given Y] = P\,\EE[Z\given Y]\end{align*}
                   where the second equality comes from conditional independence and the third one follows from the linearity condition~\eqref{eq:LC}. Thus $Q\EE[Z|Y] =0$. 
\end{proof}
  The SIR method advocated first by \cite{li:1991} consists in
estimating first conditional expectations
$C_h = \EE[Z \given Y \in I(h)]$, $h= 1,\ldots, H$, where
$I(h), h=1,\ldots, H$ are called slices and form a partition of the
sample range of $Y$ (or the support of the density function if $Y$ is
continuous). From Proposition \ref{prop:sir_principle}, those
estimates lie in the vicinity of the SDR space. Next, performing a
Principal Component Analysis (PCA) on the $C_h$'s, one obtains a good
approximation of $E$. 
More precisely, the SIR estimate of $E$ is given by the largest
eigenvectors associated to the SIR matrix, 
\begin{align*}
M _{\text{SIR}} = \sum_{h=1}^H p_h^{-1} C_hC_h^T,
\end{align*}
where $p_h = \PP[Y\in I(h)]$; see \cite{li:1991}.
Various estimation procedures of SDR spaces are proposed in \cite{cook+n:2005,zhu+z+f:2010}. In the latter reference, the matrix
\begin{align}\label{eq:M-cume}
M _{\text{CUME}}  = \EE\big[m(Y) m(Y) ^T\big] ,
\end{align}
with $m(y)  = \EE[Z \un\{  Y \le y\} ]$, is introduced as an alternative to the SIR matrix.  One advantage of this approach is that the slicing parameter $h$ is no longer needed.  In addition the estimate of the matrix $M _{\text{CUME}}$ benefits from the  aggregating effect of the expectation sign which is typically associated with variance reduction.

A pitfall of SIR is that it is not guaranteed that the $C_h$'s span the entire space $E$, so that SIR may be inconsistent. 
This may happen in particular  when the regression function $\EE[Y|Z]$ admits some symmetry properties \cite[Remark 4.5]{li:1991}, a phenomenon referred to as the SIR pathology.
In this case, \cite{li:1991} and \cite{cook+w:1991} recommend to use higher order moments such as the conditional variance of $Z$ given $Y$ to obtain a second order matrix with wider range.
This second order method requires that CCV~\eqref{eq:CCV} is satisfied in addition to LC, in which case the following result holds. Here and throughout,  $\Span(M)$ stands for the column space of matrix $M$. 
\begin{proposition}[SAVE principle]\label{prop:SAVE}
  If E is an SDR space for which  LC~\eqref{eq:LC} and CCV~\eqref{eq:CCV} are satisfied, then
  \[
Q\Big( \EE\big[ZZ^\top  \,|\, Y\big] -I_p \Big) = 0 \qquad a.s., 
\]
in other words 
$\Span\big(\EE\,\![ZZ^\top  \,|\, Y] -I_p)\subset E \quad  a.s.$
\end{proposition}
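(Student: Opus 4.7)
The plan is to decompose $Z$ along $E$ and $E^\perp$, apply $Q$ on the left, and then reduce the conditioning from $Y$ to $PZ$ via conditional independence. Specifically, writing $Z = PZ + QZ$, I would expand
\[
ZZ^\top = PZ(PZ)^\top + PZ(QZ)^\top + QZ(PZ)^\top + QZ(QZ)^\top.
\]
Multiplying by $Q$ on the left kills the first two summands since $QP=0$, while $Q^2=Q$ preserves the last two. Taking conditional expectations given $Y$, the problem reduces to showing that $\EE[QZ(PZ)^\top\given Y] = 0$ and $\EE[QZ(QZ)^\top\given Y] = Q$ almost surely.

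For each of the two remaining terms I would insert the conditioning variable $PZ$ via the tower rule, writing $\EE[\,\cdot \given Y] = \EE[\EE[\,\cdot \given Y, PZ] \given Y]$, and then use that $E$ is a SDR space, i.e.\ $Y \indep Z \given PZ$, together with characterization (CI-3) of Definition~\ref{def:conditional_indep}, to drop $Y$ from the inner conditioning. For the cross term this gives
\[
\EE[QZ(PZ)^\top \given Y] = \EE\bigl[\EE[QZ \given PZ](PZ)^\top \given Y\bigr],
\]
and the linearity condition~\eqref{eq:LC} yields $\EE[QZ\given PZ] = Q\,PZ = 0$. For the quadratic term, the same manipulation gives
\[
\EE[QZ(QZ)^\top\given Y] = \EE\bigl[Q\,\EE[ZZ^\top \given PZ]\,Q \given Y\bigr],
\]
and I would split $\EE[ZZ^\top \given PZ] = \Var{Z\given PZ} + \EE[Z\given PZ]\EE[Z\given PZ]^\top$. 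LC handles the second piece (it is $PZ(PZ)^\top$, and is annihilated by $Q$ on either side), while CCV~\eqref{eq:CCV} together with the remark immediately following it gives $\Var{Z\given PZ} = Q$ almost surely. Sandwiching between $Q$'s preserves this since $Q^2=Q$, so the quadratic term equals $\EE[Q\given Y] = Q$.

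Combining the two pieces yields $Q\,\EE[ZZ^\top\given Y] = 0 + Q = Q = Q\,I_p$ almost surely, which is the first claim. The inclusion $\Span(\EE[ZZ^\top \given Y] - I_p) \subset E$ follows immediately because $Q$ is the orthogonal projector onto $E^\perp$, so any matrix $M$ with $QM = 0$ has each column in $\ker Q = E$.

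I do not expect real obstacles: the proof is a mechanical combination of the tower rule, conditional independence, LC and CCV. The only point requiring a small argument is the identification of the (a priori unknown) constant value of $\Var{Z\given PZ}$ with $Q$, but this is settled by taking an unconditional expectation and using $\EE[Z]=0$, $\Var{Z}=I_p$, exactly as done in the text just after the definition of CCV.
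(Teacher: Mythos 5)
Your proof is correct and is essentially the paper's argument: both rest on the identity $\EE[ZZ^\top\given PZ]=Q+PZ(PZ)^\top$ (from LC and CCV, with the constant in CCV identified as $Q$), the tower rule, and conditional independence to replace conditioning on $(Y,PZ)$ by conditioning on $PZ$; you merely expand $Q\,ZZ^\top$ into its two surviving blocks before conditioning, whereas the paper conditions first and rearranges afterwards. The only cosmetic caveat, shared with the paper's own proofs, is that (CI-3) is stated for bounded $g$ and is being applied to the unbounded (but integrable) functions $QZ$ and $Z_iZ_j$, which requires the standard truncation/integrability extension.
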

\begin{proof}
   We reformulate here the arguments of \cite{cook+w:1991} in our notational framework for convenience.  An immediate consequence of assumptions~\eqref{eq:LC} and \eqref{eq:CCV} is that $\EE[ZZ^T|PZ] = Q + PZZ^TP$. From a conditioning argument and the conditional independence assumption, $\EE[ZZ^T|Y] =  Q + P\EE[ZZ^T|Y]  P$. Rearranging gives $\EE[ZZ^T|Y] - I_p =   P(\EE[ZZ^T|Y] - I_p )P$, thus $Q (\EE[ZZ^T|Y] - I_p ) = 0$. 

\end{proof}
 Notice that propositions~\ref{prop:sir_principle} and~\ref{prop:SAVE} together imply that
 $ Q(\Var{Z  \given Y} -I_p) = 0$. 
Finally for estimation purpose the extension of the CUME method to the second order framework is termed CUVE (cumulative variance estimation) by \cite{zhu+z+f:2010}.  In the case of standardized covariates,  it consists in estimating the matrix
 $ M_{\text{CUVE}} = \EE\big[ W(Y) W(Y)^\top \big] $, 
where $ W(y) = \Var{ Z \un\{ Y\le y\} } - F_Y(y)I_p $ is a second order moments matrix  which column space is included in $\tilde E$. The latter fact is obtained  by a slight modification of the argument leading to the SAVE principle. 



\section{Tail conditional independence, Extreme SDR space }\label{sec:tail-CI-central space}

\subsection{Definition for Tail Conditional Independence}\label{sec:defineTCI}
  The focus on the largest values of the target variable $Y$ suggests
  to weaken the classical definition of conditional independence, so
  that the equivalent conditions (CI-1)-(CI-3) hold only for $Y$
  exceeding a high threshold tending to its right endpoint.  Namely,
  in a similar (but different) manner as in~\cite{gardes2018tail} we
  define tail conditional independence as a variant of condition
  (CI-1) from Definition~\ref{def:conditional_indep}.  In the sequel
  the right endpoint (\ie the supremum) of the support of the random
  variable $Y$ is denoted by $\supy$. The limits as $y\to \supy$ as
  understood as the limits as $y\to \supy, y<\supy$.  We assume that
  $\PP[Y>y]\xrightarrow[]{}0$ as $y\to \supy$, in particular we
  exclude the case of point masses at $\supy$.

\begin{definition}[Tail Conditional Independence (\TCI)]\label{def:tail_cond_indep}
  Let $Y,V,W$ be random variables defined on $(\Omega, \mathcal F , \mathbb P)$. We assume that $Y$ is real valued, Borel measurable, while $V$ and $W$ take their values in arbitrary measure spaces. We say that $Y$ is \textit{tail conditionally independent from $V$ given $W$} and write $Y{}_{\infty}{ \indep} V | W$, if 
  \begin{align} \label{eq:newdefTCI}
\frac{\EE \, \big|\, \PP[ Y>y \given V, W ] - \PP[Y>y \given W ]\,\big| }{\PP[Y>y]}  &\xrightarrow[y\to \supy]{} 0. 
  \end{align}
\end{definition}
Contrary to conditional independence, tail conditional independence
is not symmetric: $\tailindep{Y}{V}{W}$ does not imply that
$\tailindep{V}{Y}{W}$.

  In \cite{gardes2018tail}'s work, tail conditional
  independence is defined in a somewhat more technical manner, see
  Definition~1 from the cited reference.  However a necessary
  condition (see Equation (2) in that paper) is the almost sure
  convergence of the $\sigma(V,W)$-measurable ratio,
  \begin{equation}
    \label{eq:defGardes}
     \frac{\PP[Y> y \given V,W] - \PP[Y>y \given W]}{\PP[Y>y \given W]} \xrightarrow[y\to \supy]{} 0, \quad \text{a.s.} 
  \end{equation}
  In the sequel we refer to our notion of tail conditional
  independence defined in \eqref{eq:newdefTCI} as \TCI, while we write
  \TCIG to refer to L. Gardes' condition~\eqref{eq:defGardes}.
Both definitions are
motivated by similar but different downstream tasks, namely prediction
of extreme values for \TCI in connection to the AM risk criterion (see
Remark~\ref{rem:TCI_AMrisk} below), versus estimation of large conditional
quantiles (see Section 3.1 in~\cite{gardes2018tail}). 


In Subsection~\ref{sec:MixtureGeneric} below we work out a generic example where \TCI holds and on this occasion, we discuss briefly the differences between \TCI and \TCIG. In order not to interrupt the flow of ideas a more thorough comparison between the two definitions is relegated to the supplementary material (Section~\ref{sec:compareGardes_long}).

In practice 
\TCI allows for an extension of the SIR framework to
handle extreme values (Section~\ref{sec:tail-SIR}). Whether it is
possible to obtain a similar extension with 
\TCIG is an open
question. We conjecture a negative answer because our Tail Inverse
Regression principles theorems~\ref{th:SIR_extreme},~\ref{th:SAVEx}
rely on a specific consequence of \TCI, namely
Property (iii)  
from
Proposition~\ref{prop:equi_tail_cond_exp} below.
In spirit our definition for \TCI and the subsequent Tail inverse
regression framework developed in Section~\ref{sec:tail-SIR} below is
compatible with the main notions underlying graphical models for
extremes (\cite{engelke2020graphical}) and One component regular
variation (\cite{hitz2016one}). These connections are further detailed
in Remarks~\ref{rem:tailDistrib_SIrex}
and~\ref{rem:relationships-graphical} from Section~\ref{sec:tail-SIR}.

Meanwhile the next remark sheds light on the
  relevance of the proposed definition of \TCI for statistical
  learning applications.
  
  \begin{remark}[\TCI and Imbalanced Classification]\label{rem:TCI_AMrisk}
    Predicting exceedances over arbitrarily high thresholds $y$ may be
    viewed as a family of binary classification problems indexed by
    $y$.  Indeed for fixed $y$, consider the binary target
    $T = \un\{ Y>y\}$ with marginal class probability
    $\pi = \pi_y = \PP[Y>y]$. The goal is thus to predict $T$, by
    means of the covariate vector $X = (V,W) $ where
    $V\in \rset^{p-d}, W \in \rset ^d$.  As $y\to\supy$, $\pi_y\to
    0$. This is a typical instance of \emph{class imbalance}, a well
    documented potential issue in binary classification which has been
    the subject of several works in the statistical learning
    literature, see \emph{e.g.} the recent papers
    \cite{pmlr-v28-menon13a} or \cite{pmlr-v119-xu20b} and the
    references therein.  A classifier is a binary function $h$ defined
    on $\rset^p$.  Given a family of candidate classifiers
    $h \in \mathcal{H}$ the goal is to select a `good' candidate based
    on a training set and an appropriate notion of a theoretical risk
    and its empirical counterpart.  When $\pi$ is so close to zero
    that the probability of a classification error
    $\PP[h(X)\neq T, T=1]$ is negligible compared with
    $\PP[h(X)\neq T, T=0]$, the traditional $0-1$ risk
    $R(h ) = \PP[h(X) \neq T]$ is driven by the latter term and tends
    to favor the trivial classifier $h \equiv 0$.  One standard
    approach aiming at granting more importance to the minority class
    when required by the application context (\emph{e.g.} if the event
    $\{T=1\}$, although rare, has an overwhelming impact) is to
    consider 
the \emph{Arithmetic Mean Risk} (AM risk in short), see \emph{e.g.}~\cite{pmlr-v28-menon13a}, 
\begin{equation}
  \label{eq:AMrisk}
\AMR(h) = \frac{1}{2} \Big[\PP[h(X) = 1 \given T=0 ] +  \PP[h(X) = 0\given T=1 ] \Big].    
\end{equation}
Generalizations to arbitrary weight vectors are considered in
\cite{pmlr-v119-xu20b}.  In a dimension reduction context consider the
classes
  \begin{align*}
    \mathcal{H}& = \{h: \rset^p \to \{0,1\}, \text{ measurable w.r.t. } \mathcal{B}(\rset^p) \}\;,\\
    \mathcal{H}_W & = \{h \in \mathcal{H}: \;  \forall (v,w)\in\rset^{p-d}\times \rset^d, \; h(v,w) = \tilde h(w), \; \tilde h  \text{ is  measurable w.r.t. } \mathcal{B}(\rset^d) \}.
  \end{align*}
  Let us refer to the classification problem attached respectively to
  $\mathcal{H}$ and $\mathcal{H}_W$ as the \emph{full problem } and
  the \emph{reduced problem}.  The Bayes classifier for each problem
  are respectively minimizers of the AM risk over the full family
  $\mathcal{H}$ and the reduced one
  $\mathcal{H}_W$, 
\[
 h^* \in \argmin_{h \in \mathcal{H}} \AMR(h) \;;\qquad h^*_W  \in \argmin_{h \in \mathcal{H}_W} \AMR(h) . 
\]
The main ingredient of the subsequent analysis are the regression
functions $\eta(x) = \PP[T=1\given X=x]$ and
$ \eta_W(w) = \PP[T=1 \given W = w].  $ A modification of standard
arguments (see the supplementary material,
Section~\ref{sec:proof_rem1}) 
yields explicit expressions for the Bayes classifiers
$h^*(x) = \un\{\eta(x)> \pi \}$,
$ h^*_W(x) = \un\{ \eta_W(w) >\pi \}$. In addition the Bayes risks are
\begin{equation}
  \label{eq:BayesAMRisks}
  \begin{aligned}
      \AMR (h^*) &
      =  \EE[ \min\Big( \frac{\eta(X)}{\pi}, \frac{1 - \eta(X)}{1-\pi} \Big) ] \;; \\
      \AMR (h^*_W)&
      =  \EE[ \min\Big( \frac{\eta_W(W)}{\pi}, \frac{1 - \eta_W(W)}{1-\pi} \Big) ].
  \end{aligned}
\end{equation}
     
Because $\mathcal{H}_W\subset \mathcal{H}$ we must have
$\AMR (h^*_W) \ge \AMR(h^*)$. The difference between the two may be
seen as a bias term: the price to pay for dimension reduction. Indeed
for any random choices $\widehat h\in \mathcal{H}$,
$\widehat h_W \in \mathcal{H}_W$, which are typically the outputs of a
statistical learning algorithm applied respectively to the full
covariate space and the reduced one, the excess risk for the reduced
problem decomposes as
  \begin{align*}
    \AMR(\widehat h_W) - \AMR(h^*)
    & =  \underbrace{\AMR(\widehat h_W) - \AMR(h^*_W)}_{A}  +
      \underbrace{\AMR(h^*_W) - \AMR(h^*)}_{B}\,.
  \end{align*}
  The first term ($A$) in the right-hand side is the excess risk
  stemming from the particular choice of the learning algorithm, which
  typically increases with the dimension of the input $W$.  In
  particular when $p-d$ is large, the excess risk term $A$ will be typically less than 
    its counterpart in the full problem
  $\AMR(\widehat h) - \AMR(h^*)$.   The
  second term~($B$) is the bias term above mentioned. The
  bias-variance compromise is in favour of dimensionality reduction
  \emph{via} projection on the second variable $W$ whenever
  $A+B \le \AMR(\widehat h) - \AMR(h^*)
  $. 

  We now derive an upper bound on the bias term $B$ which is closely connected to our definition of \TCI. Notice that for any finite set $\mathcal{X}$ and any pair of real functions $(f,g)$ it holds that
  $
|\min_{x\in \mathcal{X}} f(x) - \min_{x\in \mathcal{X}}g(x) | \le \max_{x\in \mathcal{X}} |f(x) - g(x)|. 
 $
  This, combined with~\eqref{eq:BayesAMRisks} above and  Jensen inequality, implies that 
    \begin{align}
      B = \AMR (h^*_W) - \AMR (h^*)
      &\le \EE \, \Big| \min\Big( \frac{\eta_W(W)}{\pi}, \frac{1 - \eta_W(W)}{1-\pi} \Big)  - \min\Big( \frac{\eta(X)}{\pi}, \frac{1 - \eta(X)}{1-\pi} \Big) \Big|   \nonumber       \\
      &\le \EE\Big\{ \max \Big( \frac{\eta(X) - \eta_W(W)}{\pi}, \frac{(1-\eta(X)) -(1  - \eta_W(W))}{1-\pi}  \Big)\Big\}\nonumber\\
     & = \EE\,\Big| \frac{\eta(X) - \eta_W(W)}{\pi}\Big|, \label{eq:boundBiasDimensionReduction}
    \end{align}
    where the latter identity holds whenever $\pi\le 1/2$. 
    Now, with $T = \un\{Y>y\}$, 
    \begin{align*}
      \EE\,\Big| \frac{\eta(X) - \eta_W(W)}{\pi}\Big|
       =  \frac{ \EE\,\Big|\PP[Y>y\given V,W] - \PP[Y>y \given W] \Big|}{\PP[Y>y]} . 
    \end{align*}
    One recognizes the \TCI criterion in the latter expression.  Thus
    \TCI means that the bias term $B$ vanishes as $y\to\supy$, so that
    \emph{projection on $W$ is relevant for the problem of predicting
      the rare event $\{Y>y\}$}, for large values of $y$.  The
      cut-off value $y$ above which $\AMR(\hat h_W)\le \AMR(\hat h )$,
      that is $A+B \le \AMR(\hat h ) - \AMR(h^* )$ (in expectation or
      with high probability), depends on two main factors: (i) the
      rate of convergence of $B_y$ to zero and (ii) the sensitivity of
      the learning algorithm to the curse of dimensionality for a
      given sample size. Indeed both excess risks
      $\AMR(\hat h ) - \AMR(h^* )$ and
      $\AMR(\hat h_W ) - \AMR(h^*_W )$ typically converge to zero (in
      expectation or in probability) with the sample size, at a
      different rate which depends on the respective dimensions
      $p,d$. 
    Precise quantification of
    this cut-off point  for specific learning algorithms and finite sample
    sizes is outside the scope of the present paper and left for
    future research.
 \end{remark}

\subsection{Examples and discussion} 
~\label{sec:MixtureGeneric}
In this section we provide a generic example based on a mixture model where the
  \TCI condition~\eqref{eq:newdefTCI} is satisfied under mild assumptions. We discuss an alternative additive model in Remark~\ref{rem:additive}. We consider several particular instances of the generic mixture  model and on this occasion  we discuss the similarities and differences between \TCI and the  \TCIG condition~\eqref{eq:defGardes} proposed
  in~\cite{gardes2018tail}. Some technical proofs are deferred to Section~\ref{sec:compareGardes_long} in the supplementary material, as well as additional comments, examples and counter-examples allowing for a better understanding of the differences between the two definitions.  


Our leading example  is constructed as follows: Let the target $Y$ be distributed  according to a mixture 
 \[   Y = B Y_1 + (1-B) Y_2\,, \]
 where $B$ is a Bernoulli variable with parameter $\theta\in(0,1)$,
 and $Y_1,Y_2$ are real variables defined through their conditional
 survival functions
  \begin{align*}
    S_1(y,V) & = \PP[Y_1>y\given V] \;,\qquad
    S_2(y,W)  = \PP[Y_2> y \given W ].  
  \end{align*}
  Here, the covariate variables $V,W$ are respectively valued in
  $\rset^{p-d}$ and $\rset^d$ with marginal distributions that we
  denote by $P_V$ and $P_W$.  The full covariate vector is
  $X= (V,W) \in \rset^p$.  We assume that the variables $(B,V,W)$ are
  independent.  Notice that independence between $V$ and $W$ ensures
  that the Linearity Condition and Constant Conditional Variance
  condition are automatically satisfied.
  In this context, straightforward calculations (as detailed in the supplementary material, Section B) show that
  $$
\frac{\left|\PP[Y>y\given V,W] - \PP[Y>y\given W] \right| }{\PP[Y>y]} =  \frac{\theta(S_1(y,V) -S_1(y) )}{
    \theta S_1(y) + (1-\theta)S_2(y)}.  
  $$
  The \TCI condition is that the expectancy of the above ratio vanishes as $y\to\supy$ and it is not difficult to imagine several models for $(Y_1,V)$ and $(Y_2,W)$ for which it is the case, as exemplified below. 

  \begin{remark}[Variant: additive model]\label{rem:additive}
    The  mixture model  described here is by no means the only option to construct examples of variables$ (Y,V,W) $ satisfying the \TCI assumption. Another natural example is  an additive model $Y = Y_1 +Y_2$, where $Y_1$ and $Y_2$ are respectively driven by $V$ and $W$, while $Y_1$ has lighter tails than $Y_2$. The mathematical derivations are somewhat more intricate because convolutions are involved instead of sums of distribution functions. However special cases can be worked out. In the supplementary material we consider $Y_1 = V \in \rset$, $Y_2 = W\zeta \in \rset$ where $\zeta$ is heavy-tailed and $V, W$ have a compact support which is bounded away from $0$ and we show that   \TCI\ holds. More general statements might be obtained using results regarding sums of regularly varying random variables (\cite{jessen2006regularly}). We leave this question to further works.
  \end{remark}
  As an example in the generic mixture model described above,  consider  the case where $Y_1$ and $Y_2$ are themselves
  defined as multiplicative mixtures
  \begin{align}
    Y_1  = \sum_{i=1}^{p-d} M_i^{(1)} V_i \epsilon_i \;, \qquad
    Y_2  = \sum_{j=1}^{d} M_j^{(2)} W_j \zeta_j \label{eq:defY12_mixture} \,,
  \end{align}
  where $M^{1} = (M_1^{1}, \ldots, M_{p-d}^1)$ is a multinomial vector
  with weight parameter
  $\pi^{1} = (\pi^{1}_1, \ldots, \pi^{1}_{p-d})$, that is
  $\sum_{i=1}^{p-d} M_{i}^{1} = 1$ and $\PP(M_i^1 = 1) = \pi_i^1$;
  $M_2$ is as well a multinomial variable with parameter
  $\pi^{2} = (\pi^{2}_1, \ldots, \pi^{2}_{d})$; and the variables
  $\epsilon_i$, $i\le p-d$ and $\zeta_j$, $j\le d$ are multiplicative
  noises, with different tail behaviour. Assume for
  simplicity that all $\epsilon_j$'s (\emph{resp.} $\zeta_j$'s) share
  the same survival function $S_{\epsilon}$ (\emph{resp.}
  $S_{\zeta}$) and that for all $s,t>0$,
  \begin{equation}
    \label{eq:differentTails}
    \lim_{y\to \infty} S_{\epsilon}(s^{-1}y) / S_{\zeta}(t^{-1}y) = 0. 
  \end{equation}
  Condition~\eqref{eq:differentTails} is satisfied \emph{e.g.} with
  Pareto noises,
  $S_{\epsilon}(y) = y^{-\alpha_1}, S_{\zeta}(y) = y^{-\alpha_2}$ with
  $\alpha_1>\alpha_2> 0$, or with Exponential versus Pareto noises,
  $S_{\epsilon}(y) = e^{-\alpha_1 y}, S_{\zeta}(y) = y^{-\alpha_2}$,
  $\alpha_1,\alpha_2>0$.  The random vectors
  $M^{1}, M^{2}, \epsilon, \zeta, V,W $ are independent.
  Finally  the covariate vectors $V$ and $W$ are made of independent   components $V_i, W_j$, with nonnegative, bounded support included in an interval $[a,b]$ with $0\le a<b<\infty$.

  In this generic example, $Y_1$ has a lighter tail than $Y_2$, so
  that it is the main risk factor regarding large values of $Y$, and
  it is intuitively desirable for a formal definition of tail
  conditional independence to be such that $Y$ is tail conditionally
  independent from $V$ given $W$ here.

  We now consider two special cases regarding the 
  marginal distributions of the covariates $V_j,W_j$.  recall that $[a,b]$ contains the support of each $V_i$ and each $W_j$.
 \begin{enumerate}
  
 \item[(i)] As a first go assume that  $a>0$.  
 Then   both \TCI and \TCIG hold. The proof is deferred to the supplementary material,  Section~\ref{sec:Example_bothTCI-TCIG}.

 \item[(ii)] Assume now that $a=0$, more specifically that
    each variable $V_j,W_j$ follows a binary Bernoulli distribution with
parameter $\tau \in (0,1)$ (the choice of a common $\tau$ merely
simplifies the notations). In Section~\ref{sec:TirexNotImpliesGardes} from the supplementary material we show that \TCIG does not hold, while \TCI does.

\end{enumerate}

Notice that   the difference between the two cases concerns only the marginal distribution of the covariate,   namely whether
  $\PP[W_j=0]>0$ is key. This seemingly minor variation results in
  fact in potential failure of \TCIG, while \TCI remains
  true.  The main conclusions of our comparison   between the two definitions (\TCI and \TCIG) in the supplementary material, Section~\ref{sec:compareGardes_long},  may be summarized as follows. 
  \begin{enumerate}
  \item Neither condition implies the other in general, except for discrete covariates where \TCIG implies \TCI.

  \item \TCIG  criterion concerns the additional information brought
    by $V$ regarding the probability of the event $Y>y$, \emph{after}
    conditioning on $W$. The criterion is satisfied if the additional
    information  is negligible, for \emph{all} possible values $W=w$,
    even those values such that the conditional distribution of $Y$ given
    $W=w$ is shorter tailed than the marginal distribution of $Y$.
    Indeed \TCIG is primarily designed for quantile regression, and
    the focus is not on the tail of $Y$'s distribution, but instead on
    the tails of the conditional distributions of $Y$ given $W$. This
    is the informal reason why \TCIG is not satisfied in the example
    above, Case~(ii).

\item  In constrast \TCI is designed for prediction of extreme values of $Y$. It is an integrated version of \TCIG with respect to the variable $(V,W)$,  with a weight function 
  granting more importance to $w$'s such that the ratio $\PP[Y>y\given W=w] / \PP[Y>y]$ is  large as $y\to\supy$. In  words, 
  \TCI is comparatively more sensitive to values $w$ such that the
conditional probability given $W=w$ of an exceedance $Y>y$ is  large. 
  \end{enumerate}
  
\subsection{Technical consequences of \TCI, parallel with traditional conditional independence}\label{sec:consequences_weakformulations}
  
  Definition~\ref{def:tail_cond_indep} implies equivalent weak
  formulations of the traditional conditions (CI-1,CI-2,CI-3) reviewed
  in the background section. 
\begin{proposition}\label{prop:equi_tail_cond_exp}
  If $\tailindep{Y}{V}{W}$ in the sense of Definition~\ref{def:tail_cond_indep}, then  the following equivalent  conditions~\emph{(i), (ii), (iii)} 
  hold. 
  \begin{enumerate}
  \item[(i)] 
    For any real-valued functions $g$ and $h$, measurable and bounded, we have
   \[
     \frac{\EE\Big[ g(V) h(W)  \Big( \PP[Y>y  \given V,W]  - \PP[Y>y    \given  W] \Big)  \Big] } {\PP[Y>y]}  \xrightarrow[y\to \supy]{} 0.
    \]
\item[(ii) ] 
For any real-valued functions $ g$ and $h$, measurable and bounded, we have
 \begin{equation*}
   \frac{\EE\Big[ \, h (W) \Big( \, \EE[ \un\{Y>y \}  g(V) \given W] -  \EE[ \un\{Y>y \} \given W] \EE[g(V)   \given  W] \, \Big) \, \Big] }{\PP[Y>y]} \xrightarrow[y\to \supy]{} 0.
 \end{equation*}
\item[(iii)] 
  For any real-valued functions $ g$ and $h$, measurable and bounded, we have
   \[
\frac{\EE\Big[ \, h(W) \un\{Y>y \} \; \Big( \EE[g(V)\given Y,W]  - \EE[g(V)   \given W] \Big)  \,\Big] } {\PP[Y>y]}  \xrightarrow[y\to \supy]{} 0.
    \]
  \end{enumerate}
 \end{proposition}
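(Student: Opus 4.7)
The plan is to first show that \TCI directly implies (i) by a straightforward bound, and then establish that the numerators appearing in (i), (ii), and (iii) are in fact \emph{equal} (not merely asymptotically equivalent) via the tower property. This simultaneously yields all three conclusions and their equivalence.

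\textbf{Step 1: \TCI implies (i).} Since $g$ and $h$ are bounded and measurable, a direct application of the triangle inequality and $|g(V)h(W)| \le \|g\|_\infty \|h\|_\infty$ gives
\[
\Bigl|\EE\bigl[g(V)h(W)\bigl(\PP[Y>y\mid V,W]-\PP[Y>y\mid W]\bigr)\bigr]\Bigr| \le \|g\|_\infty \|h\|_\infty \,\EE\bigl|\PP[Y>y\mid V,W]-\PP[Y>y\mid W]\bigr|.
\]
Dividing by $\PP[Y>y]$ and invoking Definition~\ref{def:tail_cond_indep} yields (i).

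\textbf{Step 2: Numerators of (i), (ii), (iii) coincide.} For (ii), write the numerator as the expectation of $h(W)\bigl(\EE[\un\{Y>y\}g(V)\mid W] - \EE[\un\{Y>y\}\mid W]\EE[g(V)\mid W]\bigr)$ and apply the tower rule to each term: the first equals $\EE[h(W)\un\{Y>y\}g(V)] = \EE[g(V)h(W)\PP[Y>y\mid V,W]]$ by conditioning on $(V,W)$, while the second equals $\EE[h(W)\EE[g(V)\mid W]\PP[Y>y\mid W]] = \EE[g(V)h(W)\PP[Y>y\mid W]]$ since $h(W)\PP[Y>y\mid W]$ is $\sigma(W)$-measurable. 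Their difference is exactly the numerator of (i). For (iii), expand the numerator as $\EE[h(W)\un\{Y>y\}g(V)] - \EE[h(W)\un\{Y>y\}\EE[g(V)\mid W]]$ by conditioning the first term on $\sigma(Y,W)$, then push the indicator inside the conditional expectation in both terms (conditioning on $(V,W)$ in the first, on $W$ in the second) to recover once again $\EE[g(V)h(W)(\PP[Y>y\mid V,W] - \PP[Y>y\mid W])]$.

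\textbf{Conclusion.} Since the three numerators are identical, dividing by $\PP[Y>y]$ and passing to the limit $y\to \supy$ shows that each of (i), (ii), (iii) is equivalent to the others, and all three follow from \TCI as established in Step~1.

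The only real obstacle is keeping the conditioning bookkeeping straight in Step~2: one has to apply the tower rule with the correct conditioning $\sigma$-algebra ($\sigma(V,W)$ versus $\sigma(W)$ versus $\sigma(Y,W)$) in each of the four resulting sub-expressions and recognize that the bounded measurable functions $g$ and $h$ can be freely moved in and out of conditional expectations against the appropriate $\sigma$-algebras. No analytic subtlety arises since everything is bounded; the argument is essentially algebraic once the tower structure is unwound.
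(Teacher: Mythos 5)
Your proof is correct and follows essentially the same route as the paper's: the \TCI hypothesis gives (i) by the elementary bound $|g(V)h(W)|\le\|g\|_\infty\|h\|_\infty$ together with Jensen's inequality, and the equivalences come from tower-rule manipulations. Your observation that the three numerators are \emph{literally identical} is a slightly cleaner packaging than the paper's pairwise implications with remainder terms $r_1(y),r_2(y)$ (which, unwound, amount to the same identity), and your conditioning bookkeeping in Step~2 checks out.
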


\begin{remark}[Relevance of Proposition~\ref{prop:equi_tail_cond_exp} for our purpose]\label{rem:interpretation-tailCI}
  {
    From a technical perspective,
  Property~(iii) in Proposition~\ref{prop:equi_tail_cond_exp} is key to obtain  the tail analogues of the SIR and SAVE  principles (Theorems~\ref{th:SIR_extreme},~\ref{th:SAVEx} in  Section~\ref{sec:tail-SIR}).  This is not surprising insofar as the traditional condition (CI-3) for  conditional independence  in  Definition~\ref{def:conditional_indep} is central to prove the SIR/SAVE principles. 
}

Whether the converse implication from Proposition~\ref{prop:equi_tail_cond_exp} holds true in general, \ie whether Conditions~(i), (ii), (iii) imply  \TCI  remains an open question which is not directly relevant for our purposes and thus left for future works.
\end{remark}

\begin{proof}[Proof of Proposition~\ref{prop:equi_tail_cond_exp}]
  
  We first show the equivalence  (ii)$\Leftrightarrow$(iii) 
  by proving that the left-hand sides of the two conditions are identical. Indeed  if $g$ and $h$ are bounded and measurable, then 
\begin{align*}
\EE\Big[ h(W) \un\{Y>y \}   \EE[g(V) \given Y,W ] \Big]=&\EE\Big[ h(W) \un\{Y>y \} g(V) \Big] \\
  = & \EE\Big[ h(W)  \EE[ \un\{Y>y \}  g(V) \given W ] \Big] \,,
\end{align*}
while
\begin{align*}
\EE\Big[ h(W) \un\{Y>y \} ( \EE[g(V) \given W] )\Big]=&\EE\Big[ h(W)\EE[\un\{Y>y \} \given W] \EE[g(V)   \given  W]  \Big].
\end{align*}

%
To show that 
(ii)$\Rightarrow$(i), 
note that
\begin{align*}
\EE\Big[ g(V) h(W)  \EE[ \un\{Y>y \} \given V,W ]\Big]& =  \EE\Big[ g(V)h(W)   \un\{Y>y \}  \Big]\\
&  = \EE\Big[ h(W)  \EE[ g(V)   \un\{Y>y \} \given W ] \Big]\\
&  = \EE\Big[  h(W)\EE[ g(V) \given W] \EE[  \un\{Y>y \} \given W  ]\Big] + r_1(y)\\
& =  \EE\Big[ g(V)  h(W)   \EE[  \un\{Y>y \} \given W ]  \Big]+ r_1(y),
\end{align*}
 where $\lim_{y\to \supy} r_1(y) / \PP[Y>y   ]= 0$ by Condition (ii). 

 The argument for the converse implication (ii)$\Leftarrow$(i)
 is similar: 
 \begin{align*}
 \EE[h (W)  \un\{Y>y \} g(V)  ] &=    \EE\Big[h (W)  \EE[\un\{Y>y \}\given V,W] g(V) \Big] \\
 &= \EE\Big[h (W)  \EE[\un\{Y>y \}\given W] g(V) \Big] +  r_2(y), 
\end{align*}
where  $\lim_{y\to \supy} r_2(y) / \PP[Y>y] = 0$ under condition (i). 

%
Finally we show that Property~(i) 
from Proposition~\ref{prop:equi_tail_cond_exp} is satisfied under the \TCI assumption from  Definition~\ref{def:tail_cond_indep}. Let $g,h$ be bounded, measurable functions defined on $\mathcal{V},\mathcal{W}$ respectively and let $\|g\|_\infty$ and $\|h\|_\infty$ denote their supremum norm. 
  By Jensen's inequality,
  \begin{align*}
    &  
      \PP[Y>y]^{-1} \left|\EE\Big[ g(V) h(W)  \Big( \PP[Y>y  \given V,W]  - \PP[ Y>y    \given  W] \Big)  \Big] \right| 
    \\
    &\le \|g\|_\infty\|h\|_\infty \PP[Y>y]^{-1} 
      \mathbb{E} \;\Big| \PP[Y>y  \given V,W]  -
      \PP[ Y>y    \given  W] \Big| \,, 
  \end{align*}
  where the right hand side tends to zero under
  Condition~\eqref{eq:newdefTCI} from
  Definition~\ref{def:tail_cond_indep}. 

\end{proof}
\subsection{Extreme dimension reduction spaces}\label{sec:extremeDRS}
In  the context of statistical regression,  we now define  extreme sufficient dimension reduction  subspaces in a similar fashion to the usual SDR spaces. 
\begin{definition}[Extreme SDR space and extreme central space]\label{as:tailCI}~
  \begin{itemize}
  \item
    An extreme SDR space for the pair $(Z,Y)$ is a subspace $E_e$ of $\rset^p$ such that
    $\tailindep{Y}{Z}{P_eZ}$, 
    where $P_e$ is the orthogonal projection on $E_e$.  In other words $E_e$ is called an extreme SDR space whenever 
    \begin{align}\label{eq:tci_excessY}
\EE\left|\frac{\mathbb P ( Y>y | Z ) - \mathbb P (Y>y | P_eZ )}{\PP[Y>y]}  \right|&\xrightarrow[y\to \supy]{} 0. 
\end{align}

\item An  extreme central space  $E_{e,c}$
  for the pair $(Z,Y)$ is an extreme SDR space of minimum dimension. 
  \end{itemize}
\end{definition}
Investigating sufficient conditions ensuring uniqueness of an extreme
central space is left for future studies. Instead, in the present
paper we shall consider \emph{an} extreme SDR space $E_e$ and we shall
show that under appropriate assumptions, inverse extreme regression
objects, namely limits of conditional expectations $\EE[Z\given Y>y]$
(Theorem~\ref{th:SIR_extreme}) and second order variants
(Theorem~\ref{th:SAVEx}) belong to $E_e$.  In particular they belong
to   any extreme \emph{central} space.  

\begin{remark}[Relationship between the central space and its extreme counterpart]\label{rem:inclusionSDR-ExSDR}
  Because Equation~\eqref{eq:tci_excessY} holds true for any
  $y\in \mathbb R$ when $E_e$ is chosen as a (non extreme) SDR space
  for the pair $(Z,Y)$, any SDR space for $(Z,Y)$ is an extreme SDR
  space. Thus, upon uniqueness of the central space $E_c$ and the
  extreme central space $E_{e,c}$, it holds that
  $ E_{e,c} \subset E_c$. Examples of other dimension reduction
  subspaces more specific than $E_c$ but not related to the extreme
  value of $Y$ include the central mean subspace
  \citep{cook2002dimension} and the central quantile subspace
  \cite{christou2020central}.
  \end{remark}


\section{Tail Inverse Regression}\label{sec:tail-SIR}
In the sequel, we consider an extreme SDR space $E_e \subset \rset^p$
for the pair $(Z,Y)$ in the sense of Definition~\ref{as:tailCI}.  That
is, we assume that $\tailindep{Y}{Z}{P_e Z}$ as in
Definition~\ref{def:tail_cond_indep}, where $P_e$ is the orthogonal
projection on $E_e$. Also we define $Q_e = I_p - P_e$.  In order to
adapt the SIR strategy to this tail conditional independence
framework, we show the following result which is a `tail version' of
the SIR principle (Proposition~\ref{prop:sir_principle}).  In the
remainder of this paper let $\|\point\|$ denote any norm on a finite
dimensional vector space.
\begin{theorem}[TIREX1 principle]\label{th:SIR_extreme}
  Assume the following conditions regarding  the pair $(Z,Y)$ and the extreme SDR space $E_e$.
  \begin{enumerate}
  \item \ (Uniform integrability): \\
   The random variables  $g_{1,A}(Z) =  \|Z\| \un\{\|Z\| >A\}  $,  $g_{2,A}(Z) = \EE[ \|Z\| \un\{\|Z\| >A\} \given P_eZ]$  indexed by $A\in\rset$ satisfy 
\begin{align}
&\lim_{A\to \infty }\limsup_{y\to y^+} \EE[ g_{k,A}(Z) \given Y>y ]  = 0 ,\qquad k = 1,2 \;;\label{eq:unif_integrabilbity}
\end{align}
\item \ (LC) The standardized vector $Z$ satisfies the linearity condition~\eqref{eq:LC} relative to~$P_e$;
  \item \ (Convergence of conditional expectations)  For some $\ell\in\rset^p $,  
  \begin{equation}
    \label{eq:limitEZ-largeY}
      \EE[Z \given  Y>y ]\xrightarrow[y\to y^+]{}\ell. 
  \end{equation}
  \end{enumerate}

Then $\ell \in E_e$.
\end{theorem}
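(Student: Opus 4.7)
The plan is to mimic the classical SIR proof (Proposition~\ref{prop:sir_principle}) but replace the exact conditional-independence identities by their asymptotic (tail) counterparts provided by Proposition~\ref{prop:equi_tail_cond_exp}(iii), while handling unboundedness of $Z$ through the truncation controlled by the uniform integrability hypothesis~\eqref{eq:unif_integrabilbity}.

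\textbf{Reduction.} Since $\EE[Z\mid Y>y]\to \ell$, we have $Q_e\EE[Z\mid Y>y]\to Q_e\ell$. It therefore suffices to prove that $Q_e\EE[Z\mid Y>y]\to 0$ as $y\to\supy$. Writing $\EE[Z\mid Y>y]=\EE[Z\un\{Y>y\}]/\PP[Y>y]$, this amounts to showing
\[
\frac{\EE\bigl[Q_e Z\,\un\{Y>y\}\bigr]}{\PP[Y>y]}\xrightarrow[y\to\supy]{}0.
\]

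\textbf{Truncation.} Fix $A>0$ and split $Z=Z\un\{\|Z\|\le A\}+Z\un\{\|Z\|>A\}$. The contribution of the second piece to $Q_e\EE[Z\mid Y>y]$ is bounded in norm by $\EE[g_{1,A}(Z)\mid Y>y]$, which by assumption~\eqref{eq:unif_integrabilbity} tends to $0$ uniformly in $y$ after taking $\limsup_{y\to\supy}$ and then $A\to\infty$. So I only need to control the truncated part.

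\textbf{Application of Proposition~\ref{prop:equi_tail_cond_exp}(iii).} Take $V=Z$ and $W=P_eZ$ (recall $\tailindep{Y}{Z}{P_eZ}$ by assumption on $E_e$). For each coordinate $j\in\{1,\dots,p\}$, apply Property~(iii) with $h\equiv 1$ and $g(Z)=(Q_eZ)_j\un\{\|Z\|\le A\}$, which is bounded (by $\|Q_e\|_{\mathrm{op}}A$). Gathering coordinates, this yields
\[
\frac{\EE\bigl[\un\{Y>y\}\bigl(\EE[Q_eZ\un\{\|Z\|\le A\}\mid Y,P_eZ]-\EE[Q_eZ\un\{\|Z\|\le A\}\mid P_eZ]\bigr)\bigr]}{\PP[Y>y]}\xrightarrow[y\to\supy]{}0.
\]
Because $\un\{Y>y\}$ is $\sigma(Y,P_eZ)$-measurable, the first conditional expectation collapses, so the above says
\[
\frac{\EE[Q_eZ\un\{\|Z\|\le A\}\un\{Y>y\}]}{\PP[Y>y]}=\EE\!\Bigl[\EE[Q_eZ\un\{\|Z\|\le A\}\mid P_eZ]\,\Big|\,Y>y\Bigr]+o(1).
\]

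\textbf{Use of LC and conclusion.} The linearity condition~\eqref{eq:LC} gives $\EE[Q_eZ\mid P_eZ]=Q_eP_eZ=0$, hence
\[
\EE[Q_eZ\un\{\|Z\|\le A\}\mid P_eZ]=-\EE[Q_eZ\un\{\|Z\|>A\}\mid P_eZ],
\]
whose norm is bounded (up to $\|Q_e\|_{\mathrm{op}}$) by $\EE[\|Z\|\un\{\|Z\|>A\}\mid P_eZ]=g_{2,A}(Z)$. Combining the two displays above with the earlier truncation error bound yields
\[
\bigl\|Q_e\EE[Z\mid Y>y]\bigr\|\ \lesssim\ \EE[g_{1,A}(Z)\mid Y>y]+\EE[g_{2,A}(Z)\mid Y>y]+o_y(1),
\]
and assumption~\eqref{eq:unif_integrabilbity} finishes the proof upon taking $\limsup_{y\to\supy}$ followed by $A\to\infty$. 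Thus $Q_e\ell=0$, i.e. $\ell\in E_e$.

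\textbf{Main obstacle.} The chief subtlety is that Proposition~\ref{prop:equi_tail_cond_exp}(iii) only licenses \emph{bounded} test functions $g$, while $Q_eZ$ is generally unbounded. The truncation-plus-uniform-integrability step~\eqref{eq:unif_integrabilbity} is precisely what bridges this gap; everything else is essentially a transcription of the classical SIR argument with the almost-sure equalities replaced by asymptotic vanishing of ratios divided by $\PP[Y>y]$.
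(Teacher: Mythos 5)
Your proof is correct and follows essentially the same route as the paper's: reduce to showing $Q_e\EE[Z\mid Y>y]\to 0$, truncate at level $A$, apply Proposition~\ref{prop:equi_tail_cond_exp} (part (iii), with $h\equiv 1$) to the bounded truncated part, use the linearity condition to kill $Q_e\EE[Z\mid P_eZ]$, and absorb both remainder terms into $\EE[g_{1,A}(Z)\mid Y>y]+\EE[g_{2,A}(Z)\mid Y>y]$ before letting $y\to\supy$ and then $A\to\infty$. The only difference is a cosmetic reordering (you truncate before introducing the difference of conditional expectations, and invoke the tower rule explicitly), which does not change the substance.
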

\begin{proof} 
 We need to show that  $Q_e \ell = 0$.   By continuity of the projection operator $Q_e$ it is enough to show that $Q_e\EE[Z\given Y>y]\to 0$ as $y \to\supy$. On the other hand the linearity condition (LC)~\eqref{eq:LC} ensures that $Q_e\EE[Z\given P_eZ] = Q_e P_eZ = 0$ almost surely.
  Thus letting $p_y = \PP(Y>y)$ one may write
  \begin{align*}
    Q_e\EE[Z\given Y>y]& = p_y^{-1}Q_e\EE[ Z\un\{Y>y\} ] \\
    & = p_y^{-1}\EE\Big[\big(Q_e\EE[ Z|P_eZ, Y ]  - Q_e\EE[Z\given P_eZ]\big)  \un\{Y>y\}\Big], 
  \end{align*}
  because the second term of the difference inside the expectation of
  the second line is zero.  

Let $A>0$ and consider separately the case when $Z\leq A$ and $Z> A$, so that  
\begin{align*}
&Q_e \EE[ Z  \un\{Y>y\}] \\
  &=   Q_e  \EE\Big[ \big(  \EE[ Z\un\{\|Z\|\leq A\}  \given P_eZ,Y ]  -
    \EE[Z \un\{\|Z\|\leq A\} \given P_eZ]  \big) \un\{Y>y\}\Big]\\
  & \qquad + Q_e  \EE\Big[ \big(  \EE[ Z\un\{\|Z\|> A\}  \given P_eZ,Y]   -
    \EE[Z \un\{\|Z\|> A\}  \given P_eZ]  \big) \un\{Y>y\}\Big]. 
\end{align*}
For the first term of the above display, using Condition~(ii) 
of Proposition~\ref{prop:equi_tail_cond_exp} 
with $h = 1$ and $g(Z) = Z\un\{\|Z\|<A\}$, we obtain that 
\begin{align}\label{eq:<A}
  p_y^{-1}\, Q_e  \EE\Big[\big(   \EE[ Z\un\{\|Z\|\leq A\}  \given P_eZ,Y]  - \EE[Z\un\{\|Z\|\leq A\}  \given P_eZ] \big) \un\{Y>y\} \Big]
  \xrightarrow[y\to\supy]{}0.
\end{align}
For the second term corresponding to $Z> A$, we use that $\|Q_ez\| \leq \|z\|$, the Jensen inequality and the triangular inequality, which yields 
\begin{align*}
  &\Big\| Q_e  \EE\Big[ \big(  \EE[ Z\un\{\|Z\| > A\}  \given P_eZ,Y]  -
    \EE[Z\un\{\|Z\|> A\}  \given P_eZ] \big) \un\{Y>y\} \Big]\Big\|\\
  & \leq    \EE\big( \{ \EE[ \|Z\|\un\{\|Z\|> A\}  \given P_eZ,Y]  +
    \EE[\| Z\| \un\{\|Z\|> A\}  \given P_eZ]\}  \un\{Y>y\} \big)\\
    & = \EE[ g_{1,A}(Z) \un\{Y>y\} ]  +
    \EE[ g_{2,A}(Z) \un\{Y>y\}]
\end{align*}
By (\ref{eq:<A}) and the previous decomposition, we have shown that
\begin{align*}
&\limsup_{y\to\supy} \|Q_e \EE[ Z  |Y>y] \| \leq \limsup_{y\to\supy}  \EE[ g_{1,A}(Z)\given  Y>y ]
     + \limsup_{y\to\supy}
    \EE[ g_{2,A}(Z) | Y>y ]. 
\end{align*}
By further letting $A \to \infty$,   by Assumption~(\ref{eq:unif_integrabilbity}), the right-hand side is arbitrarily small. 
This shows that $\lim_{y\to \supy} Q_e \EE[ Z \given Y>y] = 0$ and the proof is complete.

\end{proof}

\begin{remark}[special case: Tail conditional distribution]\label{rem:tailDistrib_SIrex}
  A particular framework justifying the existence of the limit $\ell$
  (Condition~\eqref{eq:limitEZ-largeY} in the statement of Theorem~\ref{th:SIR_extreme}) is
  the following. Assume that the covariate $Z$ admits a \emph{tail
    conditional distribution} given $Y$, in the sense that the
  distribution of $Z$ conditional to $Y>y$ converges as
  $y\to\supy$. In other words assume that there is a probability
  distribution $\mu$, that we may call the \emph{tail conditional
    distribution} of $Z$ given $Y$, such that for all bounded,
  continuous function $g$ defined on $\rset^p$,
 \[
    \EE[g(Z)\given Y>y] \xrightarrow[y\to\supy]{} \mu(g) :=\int_{\rset^p} g \ud\mu.
  \]
  By virtue of proposition 2.20 in \cite{van_der_vaart:1998}, 
  if  the uniform integrability condition \eqref{eq:unif_integrabilbity} is satisfied regarding the functions $g_{1,A}$ and if $Z$ admits a tail conditional distribution $\mu$ relative to $Y$, then it holds that
  \begin{align*}
        \EE[Z \given  Y>y ]\xrightarrow[y\to y^+]{ } m_\mu: =  \int z d\mu(z), 
  \end{align*}
so that condition~\eqref{eq:limitEZ-largeY} automatically holds with $\ell = m_\mu$.  

\end{remark}

\begin{remark}[relationships with graphical models for  extremes]\label{rem:relationships-graphical}
  The above notion of tail conditional distribution reveals a connection between the present work and graphical modeling approaches in  EVT. Namely, assuming a tail conditional distribution of $Z$ given $Y$, and requiring in addition that the random variable $Y$ is regularly varying,  is equivalent to assuming \emph{one-component regular variation} of the pair $(Y,Z)$, a concept first introduced by \cite{hitz2016one}.  See  in particular their Theorem~1.4,  where the pair $(X,Y)$ plays the role of the pair $(Y,Z)$ in the present work. 

  The notion of conditional independence at extreme levels also plays an important role in \cite{engelke2020graphical}. However our  work  departs significantly from  the latter, in so far as    the general context in the cited reference is that of unsupervised learning. All  considered variables play a symmetric role --there is no target variable nor covariate --,  and  they rely on an assumption of joint multivariate regular variation of the considered random vector which is by no means necessary in our context.

\end{remark}

\begin{remark}[Special case: extreme central space]\label{rem:extremeCentralSpace}
  Upon uniqueness of the extreme central space $E_{e,c}$,  under the assumptions of Theorem~\ref{th:SIR_extreme} we obtain that $\ell\in E_{e,c}$.
\end{remark}


  \begin{remark}[Sufficient condition for uniform integrability]
Using the fact that for any $\varepsilon>0$,  $\un\{\|Z\|>A\} \le \|Z\|^{\varepsilon}/A^{\varepsilon}$, a sufficient condition for the uniform integrability condition~(\ref{eq:unif_integrabilbity}) is that 
  \begin{align*}
\limsup_{y\to \supy} \frac{\EE[\|Z\|^{1+\varepsilon} \; \un\{Y>y\}]}{\PP[Y>y]} <\infty,
\end{align*}
for some $\varepsilon>0$.
\end{remark}


A natural strategy in view of Theorem~\ref{th:SIR_extreme} is to consider empirical counterparts of the conditional expectations $\EE[Z\given Y>y]$ for large values of $y$ so as to estimate the limit value $\ell$, which belongs to any extreme SDR space. Asymptotic statistical guarantees for this approach are derived in Section~\ref{sec:estimationWithUniform}.  However  an obvious limitation of Theorem~\ref{th:SIR_extreme} 
is that 
it recovers a single direction within an extreme SDR space, namely the line $\{t \ell, t\in\rset \}$ in the case where $\ell\neq 0$. If a unique extreme central space exists and if this subspace is one dimensional,
then indeed the generated line and the extreme central space coincide. 
To consider situations where the minimum dimension of an extreme   SDR space is greater than one,  we develop an extreme analogue of the SAVE framework by considering conditional second order moments. The main result justifying this approach is encapsulated in Theorem~\ref{th:SAVEx} below. 

\begin{theorem}[TIREX2 principle]\label{th:SAVEx}
   Assume  $(Z,Y)$ and the extreme SDR space $E_e$ satisfy the assumptions of Theorem~\ref{th:SIR_extreme} and that in addition, 
  \begin{enumerate}
  \item \ (second order uniform integrability):
\begin{align}
&\lim_{A\to \infty }\limsup_{y\to y^+} \EE[ h_{k,A}(Z) \given Y>y ]  = 0 ,\qquad k = 1,2 \;,\label{eq:unif_integrabilbity_2}
\end{align}
where $h_{1,A}(Z) =  \|Z\|^2 \un\{\|Z\| >A\}  $ and $h_{2,A}(Z) = \EE[ \|Z\|^2 \un\{\|Z\| >A\} \given P_eZ]$ for $A\in\rset$;  
\item \ (CCV) The standardized vector $Z$ satisfies the constant variance condition~\eqref{eq:CCV}  relative to~$P_e$;
\item \ (Convergence of conditional expectations)  For some $S\in\rset^{p\times p} $,
  \begin{equation}
    \label{eq:limitEZ2-largeY}
      \EE\big[ZZ^\top  \, | \,  Y>y \big]\xrightarrow[y\to \supy]{} S + \ell \ell^\top. 
    \end{equation}

  \end{enumerate}
      Then $\Span(S - I_p) \subset E_e$, \ie  $Q_e (S-I_p) = 0$. 
  %
\end{theorem}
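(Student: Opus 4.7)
The plan is to mimic the argument of Theorem~\ref{th:SIR_extreme}, replacing the first order SIR computation by its second order SAVE analogue, then apply Proposition~\ref{prop:equi_tail_cond_exp}(ii) componentwise together with the new uniform integrability condition~\eqref{eq:unif_integrabilbity_2}.

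First, since Theorem~\ref{th:SIR_extreme} applies, we have $Q_e\ell = 0$ and consequently $Q_e \ell\ell^\top = 0$. Combined with the hypothesis~\eqref{eq:limitEZ2-largeY}, the goal $Q_e(S-I_p)=0$ reduces to showing
\[
Q_e\bigl(\EE[ZZ^\top \given Y>y] - I_p\bigr) \xrightarrow[y\to \supy]{} 0,
\]
since $Q_e(S+\ell\ell^\top - I_p) = Q_e(S-I_p)$. Next, the classical SAVE computation recalled in the proof of Proposition~\ref{prop:SAVE} shows that under LC~\eqref{eq:LC} and CCV~\eqref{eq:CCV} relative to $P_e$, we have $\EE[ZZ^\top \given P_eZ] = Q_e + P_eZZ^\top P_e$. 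Left multiplying by $Q_e$ and using $Q_eP_e = 0$ and $Q_e^2 = Q_e$ gives $Q_e\EE[ZZ^\top \given P_eZ] = Q_e$, hence
\[
p_y^{-1} Q_e\, \EE\bigl[\EE[ZZ^\top \given P_eZ]\un\{Y>y\}\bigr] = Q_e,
\]
where $p_y = \PP[Y>y]$. By the tower rule, the difference that we want to send to zero rewrites as
\[
Q_e\bigl(\EE[ZZ^\top \given Y>y] - I_p\bigr) =
p_y^{-1}\, Q_e\, \EE\Bigl[\bigl(\EE[ZZ^\top \given P_eZ,Y] - \EE[ZZ^\top \given P_eZ]\bigr)\un\{Y>y\}\Bigr].
\]

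The remainder of the argument mirrors that of Theorem~\ref{th:SIR_extreme}: split the inner matrix according to whether $\|Z\|\le A$ or $\|Z\|>A$. For the truncated entries $Z_i Z_j \un\{\|Z\|\le A\}$, we apply Proposition~\ref{prop:equi_tail_cond_exp}(ii) componentwise with $V = Z$, $W = P_eZ$, $h\equiv 1$ and $g(Z) = Z_iZ_j\un\{\|Z\|\le A\}$, which is bounded by $A^2$. This shows that for each fixed $A$, the truncated contribution tends to zero as $y\to\supy$. For the remainder involving $\un\{\|Z\|>A\}$, Jensen's inequality together with the sub-multiplicativity $\|Q_e z\|\le \|z\|$ bounds the operator norm of the corresponding term by
\[
\EE[h_{1,A}(Z) \given Y>y] + \EE[h_{2,A}(Z)\given Y>y],
\]
where $h_{1,A}, h_{2,A}$ are defined in~\eqref{eq:unif_integrabilbity_2}. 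Taking $\limsup_{y\to\supy}$ and then $A\to\infty$, the second order uniform integrability~\eqref{eq:unif_integrabilbity_2} makes this contribution arbitrarily small. Combining the two parts yields $Q_e(\EE[ZZ^\top\given Y>y]-I_p)\to 0$, and we conclude that $Q_e(S-I_p)=0$ as claimed.

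The only delicate point is the bookkeeping for the matrix-valued expectations: Proposition~\ref{prop:equi_tail_cond_exp}(ii) is stated for scalar functions, so one applies it to each of the $p^2$ truncated entries $Z_iZ_j\un\{\|Z\|\le A\}$ and then recombines. The truncation and the uniform integrability argument are then exactly parallel to the first order case, with $\|Z\|$ replaced by $\|Z\|^2$.
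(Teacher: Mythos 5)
Your proof is correct and follows essentially the same route as the paper's: reduce to showing $Q_e(\EE[ZZ^\top\given Y>y]-I_p)\to 0$ using $Q_e\ell\ell^\top=0$, use LC and CCV to get $Q_e\EE[ZZ^\top\given P_eZ]=Q_e$ so the quantity becomes a conditional-expectation difference weighted by $\un\{Y>y\}$, truncate at $\|Z\|\le A$, apply Proposition~\ref{prop:equi_tail_cond_exp} entrywise to the bounded functions $Z_iZ_j\un\{\|Z\|\le A\}$ (the paper invokes part (iii), you invoke part (ii); their left-hand sides are shown to coincide, so this is immaterial), and control the remainder by the second order uniform integrability condition. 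The only cosmetic difference is that you keep the computation in matrix form and defer the componentwise bookkeeping to a closing remark, whereas the paper works entry by entry from the start; both are valid.
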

Notice that  the existence of $\ell = \lim \EE[ Z \given Y>y]$ is part of the assumptions of  Theorem~\ref{th:SIR_extreme}  and that in the latter framework,  $Q_e\ell\ell^\top = 0$. Thus condition~\eqref{eq:limitEZ2-largeY} is equivalent to  requiring that $\Var{Z\given Y>y}$ converges to some limit variance $S$ as $y\to \supy$ and the conclusion  can be rephrased as $Q_e(\, \EE\,\![ZZ^\top\,|\, Y>y\,] - I_p \,)\to 0$ as $y\to\supy$,  or equivalently $Q_e (\Var{ Z |Y>y} - I_p) \to  0$.
The technique of the proof is similar to that for  Theorem~\ref{th:SIR_extreme}. The key is to observe that the Constant Conditional Variance assumption allows to introduce a difference
$(\, \EE\,\![ZZ^\top \,|\, P_e Z, Y] - \EE\,\![ZZ^\top \,|\, P_e Z] \,) \un\{Y>y\}$ which is asymptotically negligible because of the \TCI  assumption.  The details are gathered in  the supplement material, Section~\ref{ap:proofTIREX2principle}. 
    

\section{Estimation}\label{sec:estimationWithUniform}

This section is devoted to the 
statistical implementation of our main results from
Section~\ref{sec:tail-SIR}.  
  Theorems~\ref{th:SIR_extreme} and~\ref{th:SAVEx} show that the quantities
  $\ell$ and $S$ 
  in the limits of the two statements are
  key to estimate the extreme SDR space, because $\ell\in E_e$ and
  $\Span(S - I_p)\subset E_e$. A natural first idea would be to use as
  an estimate an empirical version of the quantities
  $\EE\,\![Z \,|\, Y>y \, ]$ or $\EE\,\![ZZ^\top\,|\, Y>y\,]$ for a high
  threshold $y$ growing with the sample size $n$. A typical choice of such a threshold is the quantile of $Y$ at a probability level $1- k/n$,  where $k=k(n)$ is an intermediate sequence
  such that $k(n) \to \infty$ and $k(n)/n\to 0$ as $n\to\infty$. Here
  we propose a refinement of this strategy  integrating out
  the latter quantities over varying quantiles at probability levels $1- uk/n$ for
  $u\in(0,1)$. Such a refinement follows the proven approaches based on the CUME and CUVE matrices described in the background
  section~\ref{sec:background-SIR}. For this purpose we introduce and prove the asymptotic normality of the empirical processes associated with the first and second order method, that are respectively the specialisation of the SIR/CUME and the SAVE/CUVE processes to extreme regions of the target $Y$.  

Even though the first order method is potentially less fruitful than the second order one since the limit $\ell$ in Theorem~\ref{th:SIR_extreme} is a single vector, it helps build the intuition about the statistical theory for both the first order and second order methods. In addition, the first order method turns out to be more stable in some of our experiments.

Some notations are introduced in Section~\ref{sec:notations_estim}.  We provide asymptotic theory for the first and second order empirical processes in Section~\ref{sec:mainresult-empirical}. Section~\ref{sec:method} summarizes the methods we suggest for estimating $E_e$.

\subsection{Framework and notations}\label{sec:notations_estim}





For any right-continuous cumulative distribution function $H$ (be it empirical or not), we shall denote by $H^-$ the  left-continuous inverse of $H$,
$ H^- (u) = \inf\{ x\in \mathbb R \,:\, H(x) \geq u\}. $
Recall that with these conventions, for $u\in [0,1]$ and $x \in \rset$, we have
\begin{equation}
  \label{eq:H_inverse}
  H(x) \ge  u \iff x \ge H^-(u).
\end{equation}
  For any i.i.d. sample $(T_i)_{i \le n}$ associated with a real random variable $T$ with cumulative distribution $H$, we use the standard definition of the empirical distribution function, 
\begin{align}\label{eq:def_c0}
\hat H(x) = n^{-1} \sum_{i=1} ^n \un\{ T_i \leq x\}. 
\end{align}

For notational and mathematical convenience
we shall work with the negative target $\tilde Y = - Y$ 
and denote  the \cdf of $\tilde Y$ as $F$ ,
which we assume to be continuous in the remainder of this paper.
For simplicity we write $k$ instead of $k(n)$ for the intermediate sequence defined at the beginning of this section, as is customary in extreme value statistics. 
Consider the first order and second order inverse regression functions
 $C_n(u), B_n(u)$, 
  \begin{align}
    C_n(u) & =  \frac{n}{k}\EE[Z\un\{ \tilde Y <  F^-(u k /n )\}] \;, \label{eq:Cndeterministic} \\
      B_n(u) &= \frac{n}{k}\EE[( ZZ^\top - I_p)\un\{ \tilde Y  <  F^-(uk/n)\} ] .      \label{eq:Bndeterministic}       
  \end{align}
The empirical versions of~\eqref{eq:Cndeterministic} and~\eqref{eq:Bndeterministic}  based on an independent sample
  $(Z_i, Y_i)$ identically distributed as the pair $(Z,Y)$ are
\begin{align}
  \hat C_n(u)
  &  = \frac{1}{k} \sum_{i=1}^n Z_i\un\{\tilde Y_i \le \hat F^-(uk/n) \}\label{eq:hatCn} \,,\\
  \hat B_n(u) 
  & = \frac{1}{k} \sum_{i=1}^n (Z_iZ_i^\top - I_p) \un\{\tilde Y_i \le \hat F^-(uk/n) \} .\label{eq:hatCn2}
\end{align}
  Extensions to the more realistic situation where the pair $(X,Y)$ is observed with the mean and covariance of $X$ unknown are gathered in Section~\ref{sec:nonstandard} from the supplementary material. 
  



\subsection{Main result}\label{sec:mainresult-empirical}
 The remainder of this section aims at establishing the
weak
convergence of the (tail) empirical processes associated with 
TIREX, respectively $\sqrt{k}(\hat C_n(u) -C_n(u))$ and $\sqrt{k} (\hat B_n(u) - B_n(u))$ in the space of bounded functions
$\ell^\infty([0,1])$. This is achieved in Corollary~\ref{cor:weakCV-SIREX-SAVEX}.

  A key point of our analysis,  which follows from the continuity of $F$, is that the functions $C_n(u),B_n(u)$ and their estimates $\hat C_n(u), \hat B_n(u)$ are invariant under the transformation
  $U = F(\tilde Y)$. 
  More precisely, with the latter  notation, we have the following identities
  \[
    C_n(u) = \frac{n}{k}\EE\big[\, Z\un\{U < uk/n\} \,\big]\;, B_n(u) =
    \frac{n}{k}\EE \big [\, (ZZ^\top -I_p)\un\{U < uk/n\} \, ],
  \] and the
  processes $\hat C_n(u), \hat B_n(u)$ remain the same when
  constructed from the initial sample $(X_i,\tilde Y_i)$ or when
  constructed from the uniformized sample $ (X_i,U_i)$.  Indeed 
  for $u\in [0,1]$, it holds that
\[
  \un\{\tilde Y_i \le \hat F^-(uk/n) \}  = 
    \un\{U_i \le \hat F_U^-(uk/n) \},\qquad  \text{a.s.}, 
  \]
  where 
  $\hat F_U$ is the empirical distribution function associated with the uniform sample $U_1,\ldots, U_n$, see Fact~\ref{fact:cdf} in the supplementary material  for a short proof. 
 These facts are  a known feature of rank based estimators; see for instance \cite{fermanian+r+w:2004} in the copula estimation context and \cite{portier:2016} in the standard SIR context.

We now state our main result which is formulated in terms of a generic random pair $(V,Y)$,  an \iid\ sample thereof $(V_i,Y_i), i\le n$, and a measurable function $h: \rset^r \to \rset^q$, where $Y$ is the response variable as above,  
the  covariate  $V$ is a random vector of size $r\in\nset^*$ and $h$ is such that the random vector $h(V)$ has  finite second moments.   Define 
\begin{align*}
  D_n(u) &= \frac{n}{k}\EE[h(V) \un\{ \tilde Y  <  F^-(uk/n)  \}],  \\
  \hat D_n(u)= & \frac{1}{k}\sum_{i=1}^n h(V_i)\un\{\tilde Y_i  \le \hat F^-(uk/n) \}. 
\end{align*}
Weak convergence of the TIREX1 and TIREX2 processes (Corollary~\ref{cor:weakCV-SIREX-SAVEX}) is obtained  upon setting  $V = Z$ and respectively     $h_C(z) = z$ and $h_B(z)  = \Vector(zz^\top - I_p)$, where for any matrix $M \in \rset^{r\times s}$, $\Vector(M)$ denotes the vector of size
$r\times s$ obtain by concatenating the columns of $M$. 

\begin{theorem}[Tail empirical process for a generic pair $(V, Y)$ ]\label{thm:generic-pair-tailprocess}
 Suppose that the distribution  function $F$ of $\tilde Y = -Y$  is continuous and that, letting $U = F(\tilde Y)$, it holds that 
  \begin{enumerate}
  \item   for any $j \in\{ 1,\ldots, q\}$, the functions
  $u\mapsto \EE[ h(V)_j \un\{U\leq u \} ]$ and $u\mapsto \EE[ h(V)_j^2 \un\{U \leq u \} ]$ are differentiable on $(0,1)$ with a continuous derivative at $0$, 
\item  for all $M\geq 0$,
  $S(M) := \lim_{\delta \to 0} \EE[ h(V)h(V)^\top  \un\{\|V\|\geq M\} \given U \leq  \delta   ] $  exists  and is such that  $\lim_{M\to \infty} S(M)  = 0$,
  \item as $\delta\to 0$, $\EE[h(V)\given   U \le \delta ] $ converges to a limit $\nu\in\rset^q$. 
  \end{enumerate}
 Then  we have as $n\to\infty,k\to\infty,k/n\to 0$, 
\begin{align*}
\left\{ \sqrt k (\widehat D_n(u) - D_n (u) )\right\}_{u\in[0,1]} \leadsto \left\{ W (u) \right\}_{u\in[0,1]} ,
\end{align*}
where $W$ is a Gaussian process with mean zero and covariance function
\begin{align}
  (s,t) \mapsto & s\wedge t \; \big( \Xi - \nu\nu^{\top} \big)\,,  \label{eq:limitCovarianceGeneric}
\end{align}
with $\nu$ as in the $3^{text{rd}}$ Condition of the statement and 
\begin{equation}
  \label{eq:defXi-limit2ndMom}
  \Xi = S(0) =  \lim_{\delta\to 0} \EE[h(V)h(V)^\top \given U \le \delta]  \in \rset^{q\times q} .
\end{equation}
  
\end{theorem}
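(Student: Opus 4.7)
The plan is to uniformize the response via $U_i := F(\tilde Y_i)$, which is \iid\ uniform on $[0,1]$, and exploit rank-invariance (Fact~\ref{fact:cdf}) to rewrite $\hat D_n(u) = k^{-1}\sum_i h(V_i)\un\{U_i \le \hat F_U^-(uk/n)\}$ and $D_n(u) = (n/k)\EE[h(V)\un\{U\le uk/n\}]$. Introducing the deterministic-threshold auxiliary process $\tilde D_n(u) := k^{-1}\sum_i h(V_i)\un\{U_i \le uk/n\}$, I decompose
\[
\sqrt k(\hat D_n - D_n) \;=\; \beta_n + R_n,\qquad \beta_n := \sqrt k(\tilde D_n - D_n),\quad R_n := \sqrt k(\hat D_n - \tilde D_n),
\]
and reduce the claim to the joint weak convergence of $\beta_n$ with the classical uniform tail empirical process $\alpha_n(u) := \sqrt k\bigl((n/k)\hat F_U(uk/n) - u\bigr)$, both built from the same sample.

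First I would show $\beta_n \leadsto \tilde W$ in $\ell^\infty([0,1])^q$ with $\Cov(\tilde W(s),\tilde W(t)) = (s\wedge t)\Xi$. Finite-dimensional convergence follows from a Lindeberg CLT applied to the triangular array $k^{-1/2}h(V_i)\un\{U_i\le uk/n\}$: Condition~(2) at $M=0$ gives the limiting covariance $u\Xi$ at level $u$, while truncating at $\|V\|\le M$ and using $\lim_{M\to\infty} S(M) = 0$ furnishes the Lindeberg control. Asymptotic equicontinuity of $\beta_n$ on the shrinking $U$-region $[0,k/n]$ is the core technical point: I would exploit the monotonicity of the indicator class together with a bracketing argument in the spirit of \cite{portier:2016} and \cite{fermanian+r+w:2004}, with the same truncation and Condition~(2) controlling the contribution of the large-envelope part of $\|h(V)\|$.

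With $\hat G_n^-(u) := (n/k)\hat F_U^-(uk/n)$, one has $\hat D_n(u) = \tilde D_n(\hat G_n^-(u))$ and
\[
R_n(u) = \sqrt k\bigl[D_n(\hat G_n^-(u)) - D_n(u)\bigr] + \bigl[\beta_n(\hat G_n^-(u)) - \beta_n(u)\bigr].
\]
Classical tail quantile process theory yields $\sqrt k(\hat G_n^-(u) - u)\leadsto -W(u)$ in $\ell^\infty([0,1])$ for $W$ a standard Brownian motion; in particular $\hat G_n^-(u)\to u$ uniformly in probability, so combined with the asymptotic equicontinuity of $\beta_n$ the second bracket above is $o_P(1)$ uniformly. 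For the first bracket, write $f(v) := \EE[h(V)\un\{U\le v\}] = v\,\EE[h(V)\mid U\le v]$ so that $D_n(u) = (n/k)f(uk/n)$ and $D_n'(u) = f'(uk/n)$; Conditions~(1) and~(3) combine to give $f'(v)\to \nu$ as $v\to 0$, so a first-order Taylor expansion yields
\[
\sqrt k\bigl[D_n(\hat G_n^-(u))-D_n(u)\bigr] = \nu\sqrt k(\hat G_n^-(u)-u) + o_P(1) \;\leadsto\; -\nu\, W(u).
\]

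Since $\beta_n$ and $\alpha_n$ are tail empirical processes of the same sample, they converge jointly to a Gaussian limit $(\tilde W, W)$; independence across $i$ and Condition~(3) give the cross-covariance
\[
\Cov(\beta_n(s),\alpha_n(t)) = \tfrac{n}{k}\EE\bigl[h(V)\un\{U\le (s\wedge t)k/n\}\bigr] - \tfrac{tk}{n}\cdot\tfrac{n}{k}\EE[h(V)\un\{U\le sk/n\}] \;\longrightarrow\; (s\wedge t)\nu.
\]
Combining the previous two paragraphs, $\sqrt k(\hat D_n - D_n)\leadsto \tilde W - \nu W$, whose covariance at $(s,t)$ is
\[
(s\wedge t)\Xi - (s\wedge t)\nu\nu^\top - (s\wedge t)\nu\nu^\top + (s\wedge t)\nu\nu^\top = (s\wedge t)(\Xi - \nu\nu^\top),
\]
matching~\eqref{eq:limitCovarianceGeneric}. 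The principal difficulty is the asymptotic equicontinuity of $\beta_n$: because the natural envelope $\|h(V)\|\un\{U\le uk/n\}$ lives on a shrinking set and $h(V)$ is only assumed to be in $L^2$, the standard maximal inequalities must be combined with the truncation furnished by Condition~(2); once this is in place, the joint convergence, the Taylor expansion of $D_n$, and the covariance computation are mostly routine.
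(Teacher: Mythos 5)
Your proposal is correct and follows essentially the same route as the paper: the same decomposition through the deterministic-threshold process $\widetilde D_n$ and the key identity $\hat D_n(u) = \widetilde D_n((n/k)\hat F_U^-(uk/n))$, the same Vervaat-type inversion of the uniform tail empirical process, the same mean-value expansion of $D_n$ with derivative tending to $\nu$, and the same final covariance algebra. The only cosmetic difference is that the paper obtains the joint convergence of $(\alpha_n,\beta_n)$ by applying its generic weak-convergence lemma to the augmented function $h'(V)=(1,h(V))$, whereas you compute the cross-covariance directly; and the paper proves the equicontinuity of $\beta_n$ via the bracketing CLT for classes changing with $n$ (van der Vaart and Wellner, Theorem 2.11.23), which is precisely the Lindeberg-plus-bracketing argument you sketch.
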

\begin{cor}[Weak convergence of the TIREX1  and TIREX2 processes]\label{cor:weakCV-SIREX-SAVEX}
  By choosing the pair $(V,Y) =(Z,Y)$ and assuming that the function $h_C(z) = z$  (\emph{resp.} $h_B(z) = \Vector(zz^\top  -I_p)$)  satisfies the assumptions of Theorem~\ref{thm:generic-pair-tailprocess}, the TIREX1 process $\sqrt{k}(\hat C_n(u) - C_n(u))$
  (\emph{resp.} the TIREX2 process $\sqrt{k}(\hat B_n(u) - B_n(u)$) converges weakly in $\ell^\infty(0,1)$ to a  tight Gaussian process $W_C$ (\emph{resp.} $W_B$) with covariance function given by \eqref{eq:limitCovarianceGeneric} with $V = Z$ and $h = h_C$ (\emph{resp. } $h = h_B$)
\end{cor}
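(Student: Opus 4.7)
The plan is to recognize that the corollary is simply the specialization of Theorem~\ref{thm:generic-pair-tailprocess} to the two specific choices of covariate and transformation that define the TIREX1 and TIREX2 processes. Since the assumptions of the theorem are taken as hypotheses in the corollary, essentially all the analytic work has already been done; what remains is a matching of definitions and a minor identification between the vectorization $\rset^{p^2}$ and the matrix space $\rset^{p\times p}$.

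For the TIREX1 process, I would apply Theorem~\ref{thm:generic-pair-tailprocess} with the generic pair $(V,Y)$ chosen to be $(Z,Y)$ (so $r=p$) and the generic function $h$ chosen to be $h_C(z)=z$ (so $q=p$). A direct comparison of the definitions shows that, under these substitutions, the objects $D_n(u)$ and $\widehat D_n(u)$ appearing in the theorem coincide respectively with $C_n(u)$ in~\eqref{eq:Cndeterministic} and $\hat C_n(u)$ in~\eqref{eq:hatCn}. The theorem's conclusion then delivers the weak convergence of $\sqrt k(\hat C_n-C_n)$ in $\ell^\infty([0,1],\rset^p)$ to a tight centered Gaussian process $W_C$ whose covariance kernel is~\eqref{eq:limitCovarianceGeneric} evaluated at $V=Z$, $h=h_C$, with $\nu=\lim_{\delta\to 0}\EE[Z\mid U\le \delta]$ and $\Xi=\lim_{\delta\to 0}\EE[ZZ^\top\mid U\le\delta]$.

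For the TIREX2 process, the argument is analogous, applying the theorem now with $h=h_B$, where $h_B(z)=\Vector(zz^\top-I_p)$ takes values in $\rset^{p^2}$ (so $q=p^2$). Under this choice the generic quantities of the theorem satisfy $D_n(u)=\Vector(B_n(u))$ and $\widehat D_n(u)=\Vector(\hat B_n(u))$ by definitions~\eqref{eq:Bndeterministic} and~\eqref{eq:hatCn2}. Theorem~\ref{thm:generic-pair-tailprocess} therefore yields the weak convergence of the vectorized process $\sqrt k\,\Vector(\hat B_n-B_n)$ in $\ell^\infty([0,1],\rset^{p^2})$ to a Gaussian process with the covariance~\eqref{eq:limitCovarianceGeneric} associated with $h_B$. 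Because $\Vector$ is a continuous linear bijection between $\rset^{p\times p}$ and $\rset^{p^2}$, its inverse is also continuous, and the continuous mapping theorem (applied coordinatewise in $u\in[0,1]$) immediately transfers this weak convergence to $\sqrt k(\hat B_n-B_n)$ in $\ell^\infty([0,1],\rset^{p\times p})$, giving the limit process $W_B$ defined by $\Vector(W_B)=W$.

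There is no substantive obstacle in this argument; the only point worth flagging is that Theorem~\ref{thm:generic-pair-tailprocess} is written for processes in $\ell^\infty([0,1])$, whereas here the processes are vector- and matrix-valued. This extension is routine since weak convergence in $\ell^\infty([0,1],\rset^q)$ reduces to joint weak convergence of the $q$ coordinate-wise tail empirical processes together with asymptotic tightness, both of which follow from the proof of the theorem applied coordinatewise.
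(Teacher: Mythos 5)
Your proposal is correct and coincides with the paper's (implicit) argument: the corollary is stated as a direct specialization of Theorem~\ref{thm:generic-pair-tailprocess} to $(V,Y)=(Z,Y)$ with $h=h_C$ and $h=h_B$, and the identifications $D_n=C_n$, $D_n=\Vector(B_n)$ are exactly the intended reading. The vectorization caveat you flag is harmless since the theorem is already stated for $\rset^q$-valued $h$ and $\Vector$ is a linear isomorphism, so no further work is needed.
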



\begin{proof}[Proof of Theorem~\ref{thm:generic-pair-tailprocess}]
  
Consider  the pseudo-empirical version of $D_n(u)$,  
\begin{align}\label{eq:tildeC}
   \widetilde D_n ( u)  =k^{-1} \sum_{i=1}^n h(V_i)\un\{ U_i \leq u k/n \}
     =k^{-1} \sum_{i=1}^n h(V_i)\un\{ \tilde Y_i \leq F^-( u k/n) \} \,.
\end{align}
Notice that $\widetilde D_n$ is not observed but serves as an intermediate quantity through the following key identity:
\begin{align*}
  \hat D_{n }(u) =\widetilde D_n \Big( \frac{n}{k} \hat F_U^-( u k/n ) \Big), 
\end{align*}
where $\hat F_U$ is the empirical \cdf associated with the sample $(U_i, i\le n)$. 
Introducing the process
\begin{equation}
  \label{eq:intermediateProcesses}
\begin{aligned}
\widetilde \Gamma(u) &= {\sqrt k} \left( \widetilde D_n(  u )   -  D_n( u )    \right),\quad  u \in [0,1],  
\end{aligned}
\end{equation}
we have the following decomposition
\begin{align}
\sqrt k (\hat D_{n }(u) - D_n( u) ) &= \widetilde \Gamma\Big(\frac{n}{k}\, \hat F^-\big(u k / n \big)\Big)   
                              +\sqrt k \Big( D_n \Big( \frac{n}{k}\, \hat  F_U^-\big( u  k / n  \big)\Big) -  D_n (u) \Big)\,. \label{decomp_as_normbis}
\end{align}
In the remainder of the proof,  
we show that the first term can be
replaced by $\widetilde \Gamma(u)$, while the second term can be
replaced by $-\nu\hat\gamma_1(u)$ 
  where $\hat\gamma_1$ is the tail empirical process for uniform
  random variables,
\begin{equation}
  \label{eq:gamma1}
  \begin{aligned}
    &\hat \gamma_1(u)   =  \sqrt k \left( \frac{n}{k}\hat F_U(uk/n)  - u\right) . 
  \end{aligned}
\end{equation}

Finally we  show that  the process
$(\hat\gamma_1(u), \widetilde \Gamma(u))_{u\in[0,1]}$ converges
jointly to a Gaussian process. 

\subsubsection*{ Intermediate results, uniform tail processes}
The main tools that we use in our proof of Theorem~\ref{thm:generic-pair-tailprocess} concern 
the weak convergence of  the  tail empirical (quantile) process associated with a uniform response variables. Many approaches have been considered to handle the behavior of such processes, see~\cite{csorgo+c+h+m:1986} for general empirical  processes and~\cite{einmahl1988strong} for the tail version.
For the sake of completeness we provide in the supplementary material (Section~\ref{sec:proof_genericLemma_1})  a different, direct  proof of Lemma~\ref{lemma:useful_lemma_generic} below, relying on `classes of function  changing with $n$' (\cite{vandervaart+w:1996})

\begin{lemma}\label{lemma:useful_lemma_generic}

  Under  the assumptions of 
  Theorem~\ref{thm:generic-pair-tailprocess},
  the process $\widetilde \Gamma$ defined in~\eqref{eq:intermediateProcesses} converges weakly in $\ell^\infty(0,1)$  to a tight Gaussian process $\widetilde W$ with covariance function 

\begin{align*}
(u_1,u_2) \mapsto    (u_1\wedge u_2 ) \Xi \,,
\end{align*}
where $\Xi$ is  defined in~\eqref{eq:defXi-limit2ndMom}  

\end{lemma}
An immediate consequence of   Lemma~\ref{lemma:useful_lemma_generic}, obtained upon setting $V = Z$ and $h(V) = 1$,  is the weak convergence of  the  tail empirical process for uniform random variables introduced in \ref{eq:gamma1}.  

\begin{cor}\label{lemma:useful_cor_1}
As $n\to \infty$, $ k\to \infty$,  and $k/n\to 0$, the uniform tail empirical process~\eqref{eq:gamma1}
weakly converges to  a standard Brownian motion $W_1$. 
\end{cor}

Combining Corollary \ref{lemma:useful_cor_1} and an appropriate variant of Vervaat's lemma (see Section~\ref{sec:vervaat} from the supplementary material) we obtain in Section~\ref{sec:proofUsefulLemma2} from the same supplement,  the following result. 
\begin{lemma}\label{lemma:useful_lemma_2}
As $n\to \infty$, $ k\to \infty$,
\begin{align*}
 \sup_{u\in (0,1]}  \left|  \sqrt k \left( \frac{n}{k}\hat F_U^-(u  k / n  )  - u \right)+  \hat\gamma_1(u) 
\right|  = o_{{\mathbb P}}(1).
\end{align*}

\end{lemma}

\subsubsection*{Separate and joint convergence  in  Decomposition~(\ref{decomp_as_normbis})}
We now show the following three relations: as $n\to\infty$,
\begin{align}
  \label{eq:unifContinGamma1}
&\sup_{u \in (0,1]}\Big|\widetilde \Gamma\Big( \frac{n}{k} \; \hat F_U^-\big( u  k / n  \big) \Big) - \widetilde \Gamma(u)\Big| = o_{\PP}(1),\\
&\label{eq:unifContinD}\sup_{u \in (0,1]}\Big|\sqrt k \left(D_n \Big(\frac{n}{k}\; \hat F_U^-\big( u  k / n  \big) \Big) -  D_n (u) \right)+  \nu  \; \hat \gamma_1(u) \Big| = o_{\PP}(1),\\
&\label{eq:jointconvergence}\begin{pmatrix}
  \hat \gamma_{1}(u)\\  \widetilde \Gamma(u)
\end{pmatrix}
\leadsto  W'(u) ,\end{align}
where $W'$ is a centered Gaussian process on $(0,1]$ with covariance function
$(s,t) \mapsto s\wedge t  \; \Xi'$. Here $\Xi' = S'(0)$ is the limit second moment matrix from Lemma~\ref{lemma:useful_lemma_generic} applied to  $ h'(V) = (1, h(V))$. More specifically, with this choice of $h'$, we have
\[\Xi'  =
  \begin{pmatrix}
    1 & \nu^\top \\
    \nu  & \Xi  
  \end{pmatrix} \in\rset^{(q+1)\times (q+1)}
\]
where 
$\Xi = \lim_{\delta\to 0} \EE\,[ h(V)h(V)^\top \, | \,  U\le \delta\, ] $. 

We first prove \eqref{eq:unifContinGamma1}. From Lemma~\ref{lemma:useful_lemma_generic}, 
the process $\widetilde\Gamma$ is tight, whence asymptotically equi-continuous, meaning that 
  \[
    \lim_{\delta\downarrow 0}\,\limsup_n\,\PP\,\Big(\,\sup_{|s-t|\le \delta} \,| \widetilde \Gamma(s) - \widetilde \Gamma(t) |\,\, >\epsilon  \, \Big) =0 . 
  \]
  Also, from  Lemma~\ref{lemma:useful_lemma_2} and Corollary~\ref{lemma:useful_cor_1},  $\sup_{u\in ( 0,1]} |(n/k) \hat F_U^-( u k/n )  -u|   = o_{\PP}(1)$. Combining the two yields \eqref{eq:unifContinGamma1}.
  
To prove \eqref{eq:unifContinD}, we apply the mean value theorem to get that
 \begin{align*}
   &  \sqrt k \left\{D_n \Big(\frac{n}{k} \hat F_U^-\big( u  k / n  \big) \Big) -  D_n (u) \Big)\right\} \\
   &= \frac{n}{\sqrt k} \bigg\{\EE\Big[ h(V) \un \{U\leq u_n \} \Big]_{ u_n =  \hat F_U^-( u k/n  )} - \EE \Big[ h(V) \un \{U\leq uk/n \} \Big]\bigg\}\\
 & = \frac{n}{\sqrt k}   \tilde g(  \tilde  U_{u,n } )  \Big\{ \hat F_U^-\big( u  k / n  \big) - u  k / n  \Big\}\\
 & =  \sqrt k  \, \tilde g(  \tilde  U_{u,n } )  \Big\{ \frac{n}{k} \;  \hat F_U^-\big( u  k / n  \big) - u  \Big\}\,,
  \end{align*}
  where $\tilde g(x) $ is the derivative of
  $ x\mapsto \EE[ h(V) \un\{U\leq x \} ]$ at point $x$ and
  $\tilde U_{u,n } $ lies on the line segment between
  $ \hat F_U^-( u k/n )$ and $uk/n$. Lemma~\ref{lemma:useful_lemma_2}
  and Corollary~\ref{lemma:useful_cor_1} imply that
  $\tilde U_{u,n}\to 0$ in probability uniformly over $u\in[0,1]$,
  thus by continuity of $\tilde g$ at $0$,
  $g(\tilde U_{u,k}) = \tilde g(0) + o_{\PP}(1)$.  We can further
  calculate $\tilde g(0)$ based on assumption 3 in
  Theorem~\ref{thm:generic-pair-tailprocess} as follows,
  \[\tilde g(0) = \lim_{u\to 0} \EE[h(V) \un\{U\le u\}] / u = \lim_{u\to 0} \EE[h(V)\given U\le u] = \nu. \]   
  Therefore, the relation \eqref{eq:unifContinD} is proved by applying
  Lemma \ref{lemma:useful_lemma_2}, and the Slutsky's lemma.  Finally,
  \eqref{eq:jointconvergence} follows from applying Lemma
  \ref{lemma:useful_lemma_generic} to the function
  $h'(V) = (1, h(V) )$.

  \subsubsection*{Conclusion}
By combining the decomposition in \eqref{decomp_as_normbis} with the relations \eqref{eq:unifContinGamma1}-\eqref{eq:jointconvergence}, we obtain that, as $n\to\infty$,
\[
\left\{\sqrt{k}\left(\widehat D_n (u) - D_n(u)\right) \right\}_{u\in [0,1]}\leadsto W := ( -   \nu  , I_q )\;  W'
  \]
  which is a Gaussian process with covariance function
  \[
    \begin{aligned}
 \Sigma(s,t) & = s\wedge t  \; ( -   \nu   , I_q )   \begin{pmatrix}
    1 & \nu^\top \\
    \nu & \Xi 
  \end{pmatrix}
  \begin{pmatrix}
    - \nu^\top \\ I_q 
  \end{pmatrix}  \\
  & = s\wedge t  \, \big(\Xi - \nu\nu^\top \big).
 \end{aligned}
    \]

\end{proof}
\subsection{Proposed estimation method}\label{sec:method}
This section summarizes the main steps of the first and second order methods that we propose  based on the processes $\hat C_n$ and $\hat B_n$.
We first  introduce TIREX1 and TIREX2 matrices in  parallel with 
the matrix $M_{\text{CUME}}$ defined in~\eqref{eq:M-cume} 
in our framework, following the integral based methods proposed by~\cite{zhu+z+f:2010}, see also \cite{portier:2016}. 
In line with the CUME~\eqref{eq:M-cume} 
matrix,  we define 
\begin{equation}\label{eq:CUMEX-CUVEX}
\begin{aligned}
  M_{\text{TIREX1}} & = \int_0^1C_n(u )C_n(u)^\top  \ud u \,,  \\ 
    M_{\text{TIREX2}} & = \int_0^1B_n(u )B_n(u)^\top  \ud u  \,,
  \end{aligned}
\end{equation}
where 
 $C_n$ and $B_n$ are defined in ~\eqref{eq:Cndeterministic} and ~\eqref{eq:Bndeterministic} respectively. We omit the dependency of the matrices on $n,k$ for convenience. An easy but important observation which underlies our strategy for estimating an extreme SDR space is the following lemma. 
\begin{lemma}[Consistency of the TIREX matrices]\label{lem:consistentMTIREX} \ \\
\noindent \emph{(i)} Under the assumptions of Theorem~\ref{th:SIR_extreme},
  \[
 M_{\textrm{TIREX1}} \longrightarrow \frac{1}{3}\ell\ell^\top \quad \text{ as } n\to \infty. 
\]
\noindent\emph{(ii)} Under the assumptions of Theorem~\ref{th:SAVEx},
  \[
 M_{\textrm{TIREX2}} \longrightarrow \frac{1}{3}( S  - I_p + \ell\ell^\top)^2\quad \text{ as } n\to \infty. 
\]
\end{lemma}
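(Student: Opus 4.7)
The strategy rests on rewriting $C_n(u)$ and $B_n(u)$ in terms of the uniform variable $U=F(\tilde Y)$, which is legitimate since $F$ is continuous (cf.\ the discussion preceding Corollary~\ref{cor:weakCV-SIREX-SAVEX}). Because $U$ is uniform on $[0,1]$ we have $\PP[U\le uk/n]=uk/n$, so that
\[
C_n(u)=\frac{n}{k}\,\EE\,[Z\un\{U\le uk/n\}]=u\,\EE\,[Z\mid U\le uk/n],
\]
and analogously
\[
B_n(u)=u\,\bigl(\EE[ZZ^\top\mid U\le uk/n]-I_p\bigr).
\]
The assumption \eqref{eq:limitEZ-largeY} of Theorem~\ref{th:SIR_extreme} is equivalent to $\EE[Z\mid U\le\delta]\to\ell$ as $\delta\to 0$, and similarly the assumption \eqref{eq:limitEZ2-largeY} of Theorem~\ref{th:SAVEx} is equivalent to $\EE[ZZ^\top\mid U\le\delta]\to S+\ell\ell^\top$ as $\delta\to 0$. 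Hence for each fixed $u\in(0,1]$, as $n\to\infty$ one has $uk/n\to 0$, so
\[
C_n(u)\to u\ell,\qquad B_n(u)\to u\,(S-I_p+\ell\ell^\top).
\]

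The main task is now to justify interchanging the limit in $n$ with the integral over $u\in[0,1]$. For this we establish a uniform bound on $\|C_n(u)\|$ and $\|B_n(u)\|$ valid for all $u\in[0,1]$ and $n$ large enough. Since $\EE[Z\mid U\le\delta]\to\ell$, there exists $\delta_0>0$ such that $\|\EE[Z\mid U\le\delta]\|\le\|\ell\|+1$ for all $\delta\in(0,\delta_0]$. Because $k/n\to 0$, for $n$ large enough we have $uk/n\le\delta_0$ for every $u\in[0,1]$, and therefore $\|C_n(u)\|\le u(\|\ell\|+1)$ uniformly in $u$. The same argument applied to $\EE[ZZ^\top\mid U\le\delta]\to S+\ell\ell^\top$ yields a uniform bound $\|B_n(u)\|\le u\,M$ for some constant $M<\infty$ and all $n$ large enough. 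Both bounds are integrable on $[0,1]$.

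Applying the dominated convergence theorem entrywise to $C_n(u)C_n(u)^\top$ and to $B_n(u)B_n(u)^\top$ therefore gives
\[
M_{\textrm{TIREX1}}=\int_0^1 C_n(u)C_n(u)^\top\,\ud u\longrightarrow \int_0^1 u^2\,\ell\ell^\top\,\ud u=\tfrac{1}{3}\ell\ell^\top,
\]
which proves (i). For (ii), the matrix $S-I_p+\ell\ell^\top$ is symmetric (since $S$ arises from the limit of $\Var[Z\mid Y>y]$ up to $\ell\ell^\top$), so $(S-I_p+\ell\ell^\top)(S-I_p+\ell\ell^\top)^\top=(S-I_p+\ell\ell^\top)^2$, and dominated convergence yields
\[
M_{\textrm{TIREX2}}\longrightarrow \int_0^1 u^2\,(S-I_p+\ell\ell^\top)^2\,\ud u=\tfrac{1}{3}(S-I_p+\ell\ell^\top)^2,
\]
proving (ii). The only delicate point is the uniform domination step, and it is essentially free thanks to the fact that $k/n\to 0$ makes the cutoff $uk/n$ uniformly small for all $u\in[0,1]$ once $n$ is sufficiently large; no uniform integrability beyond what is already used in Theorems~\ref{th:SIR_extreme} and~\ref{th:SAVEx} is needed.
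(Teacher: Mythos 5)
Your proposal is correct and follows essentially the same route as the paper's proof: pointwise convergence of $C_n(u)C_n(u)^\top$ (resp.\ $B_n(u)B_n(u)^\top$) to $u^2\ell\ell^\top$ (resp.\ $u^2(S-I_p+\ell\ell^\top)^2$), followed by dominated convergence on $(0,1)$ with a dominating bound of the form $u$ times a constant. The only cosmetic difference is in how that bound is produced: the paper extracts it from the uniform integrability conditions \eqref{eq:unif_integrabilbity} and \eqref{eq:unif_integrabilbity_2} via a truncation at level $A$, whereas you obtain it directly from the convergence assumptions \eqref{eq:limitEZ-largeY} and \eqref{eq:limitEZ2-largeY} through eventual boundedness of the conditional expectations; both work because $k/n\to 0$ pushes the thresholds $uk/n$, $u\in[0,1]$, uniformly into the tail region.
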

\begin{proof}
   Under the assumptions of the first statement,   for fixed $u$,  $C_n(u)C_n(u)^\top \to u^2 \ell\ell^\top$ as $n\to\infty$. The result follows by dominated convergence on $(0,1)$, which applies by virtue of Condition~\eqref{eq:unif_integrabilbity}. Indeed this uniform integrability assumption ensures that  for some constant $A>0$,  for $n$ large enough, for all $u\in(0,1)$,
  \begin{align*}
\|    C_n(u) \| & = \Big\| u\EE\big[Z \,|\, \tilde Y < F^-(uk/n) \,\big]\,\Big\| \\
    &\le u(A + \EE[ \|Z\|\un\{\|Z\|>A\} \,|\, \tilde Y < F^-(uk/n) ] \le u(A+1).  
  \end{align*}
The argument for the second statement is similar, 
up to a call to Condition~\eqref{eq:unif_integrabilbity_2} instead of~\eqref{eq:unif_integrabilbity}.
\end{proof}
As a consequence of Lemma~\ref{lem:consistentMTIREX}, both column spaces of $M_{\text{TIREX1}} $  and  $ M_{\text{TIREX2}} $ are asymptotically included in $E_e$. The column space of $M_{\text{TIREX1}} $ has dimension one while  that of $ M_{\text{TIREX2}} $ can be of any dimension not higher than that of $E_e$.
%
%
We propose the following estimation procedures based respectively on the processes $\hat C_n$ and $\hat B_n$. 

\subsubsection*{TIREX1}
\begin{enumerate}
  \item Choose $k \ll n$ and  $1 \leq d\leq p $.
\item Compute the estimated TIREX1 matrix, $\widehat M_{\text{TIREX1}} = \int_0^1 \hat C_n(u) \hat C_n^\top(u) \ud u $ using the identity given in \eqref{hat_cumex}. 
\item Perform an eigen decomposition of $\widehat M_{\text{TIREX1}}$ and keep the first $d$ eigenvectors  $(e_i, i\le d)$. 
\item ouptut: $\hat E_e = \Span(\{e_i, i\le d\})$.
\end{enumerate}
Choosing $d>  1$ is not immediately justified because the limit of  $M_{\text{TIREX1}} $ is a rank one matrix $\ell\ell^\top  / 3$ as indicated in Lemma \ref{lem:consistentMTIREX}. However, 
empirical evidence suggests that allowing $d>1$ can be useful to recover more components among the extreme central subspace basis. This is why we include this option in the algorithm.



\subsubsection*{TIREX2}
\begin{enumerate}
  \item Choose $k \ll n$ and  $1 \leq d\leq p $.
\item Compute the estimated TIREX2 matrix, $\widehat M_{\text{TIREX2}} = \int_0^1 \hat B_n(u) \hat B_n^\top(u) \ud u $ using the identity given in \eqref{hat_cuvex}. 
\item Perform an eigen decomposition of $\widehat M_{\text{TIREX2}}$ and keep the first $d$ eigenvectors  $(e_i, i\le d)$ associated with the highest eigen values. 
\item ouptut: $\hat E_e = \Span(\{e_i, i\le d\})$
\end{enumerate}

 We make the following remarks regarding the relationships between our main theoretical result Corollary~\ref{cor:weakCV-SIREX-SAVEX} and the proposed estimation methods TIREX1 and TIREX2. 

 \begin{remark}[Asymptotic normality of the TIREX matrices]\label{rem:normalityTirexMatrices}
  The asymptotic normality of the random matrices $\sqrt{k}(\widehat{M}_{\text{TIREX1}} - {M}_{\text{TIREX1}}  )$ and $\sqrt{k}(\widehat{M}_{\text{TIREX2}} - {M}_{\text{TIREX2}}  )$ could be obtained as a further consequence of Corollary~\ref{cor:weakCV-SIREX-SAVEX} with straightforward calculations.  
  This can be achieved by using the Delta-method as in the proof of~\cite{portier:2016}, Proposition~5. For the sake of conciseness  we leave  the detailed proof to interested readers. 
\end{remark}

\begin{remark}[Bias term]\label{rem:bias}
  Notice that the TIREX matrices $M_{\text{TIREX}}$ are deterministic but subasymptotic quantities which depend on the the choice of the ratio $k/n$. The ultimate goal in view of Lemma~\ref{lem:consistentMTIREX}  would be to obtain the limit distribution of $\sqrt{k}(\widehat{M}_{\text{TIREX1}} - \frac{1}{3} \ell\ell^\top )$
  and $\sqrt{k}(\widehat{M}_{\text{TIREX2}} - \frac{1}{3} (S - I_p + \ell\ell^\top) )$. An obvious way to do so is to assume that the bias terms   $\sqrt{k}({M}_{\text{TIREX1}} - \frac{1}{3} \ell\ell^\top )$ and   $\sqrt{k}({M}_{\text{TIREX2}} - \frac{1}{3} (S - I_p + \ell\ell^\top ) )$ converge to zero in probability, and use Slutsky's lemma. 
\end{remark}

\begin{remark}[Principal Component Analysis of the TIREX matrices]
  The  output of the TIREX methods is the eigen spaces of the
  estimated TIREX matrices. An important final step  is to show that such eigen spaces converges to the space spanned by the limits $1/3 \ell\ell^{\top}$ and $1/3 (S- I_p + \ell\ell^\top)$.
  {
    A possible starting point would be to use results from
    perturbation theory, see \emph{e.g.} \citep[Theorem
    3]{zwald2005convergence} where the Frobenius norm of the error is
    controlled by the inverse of a spectral gap.%
  }%
  Since this problem
  is left aside even in the traditional inverse regression literature
  we leave this question to further research while demonstrating the
  performance of the TIREX algorithms by numerical
  experiments. 
\end{remark}

\begin{remark}[Choices of $d,k$]\label{rem:choicekd}
  The choice of the intermediate sequence~$k$ of extreme order statistics is a standard issue in extreme value statistics. In our experiments (Section~\ref{sec:experiments}) we propose to choose $k$ by cross-validation. Theoretical investigation regarding this strategy is beyond the scope of this paper. Similarly, the choice of $d$ in the PCA decomposition of the matrix
  $\widehat M_{\text{TIREX2}}$ is a recurrent question in the PCA literature, which is also left to further research. In practice a natural and widely used strategy is an elbow method applied to the plot of the estimated  eigen values.
In the supervised learning context,
we recommend to choose $d$ by cross-validation. More generally (outside the supervised learning context), 
testing for the rank of the underlying matrix is a convenient method to infer the value of $d$. Such an approach has been succcessfully employed  in the SDR literature \citep{portier2014bootstrap} where the test statistics are usually based on the eigenvalues amplitude. Finally recall that  the limit of the matrix $M_{\text{TIREX1}}$ has rank one, so that the default choice of $d = 1$  in the first order method is legitimate.  
Investigating theoretical guarantees for choosing the value of $d$ in the TIREX context is beyond the scope of this paper and left for future work.

\end{remark}



\section{Experiments}
\label{sec:experiments}
 
This section focuses on the practical usefulness of TIREX for finite
sample sizes based on simulated and real data.  We first give some
details about the implementation of TIREX
(Section~\ref{sec:implementTIREX}) and discuss its computational
complexity. {
  We discuss the improvement brought by TIREX over the estimation
  method proposed by~\cite{gardes2018tail}. 
}%
Second, with synthetic datasets 
of various dimensions, we explore the estimation performance of TIREX1
and TIREX2 for various values of $k$, as measured by a distance between the estimated and true
extreme SDR spaces (Section~\ref{sec:SyntheticData}). {
  On this occasion we compare the estimation performance of TIREX with
  that of its closest alternatives, namely \cite{gardes2018tail}'s
  method, CUME and CUVE. 
  Finally in Section~\ref{sec:AD} we compare TIREX with several
  existing dimension reduction tools for predicting tail events on
  several real data sets of relatively high dimension.%
}
\subsection{TIREX implementation}\label{sec:implementTIREX}
In a preliminary step common to all our experiments, the covariates
are empirically standardized and we set
$\hat Z_i = \hat \Sigma^{-1/2} (X_i - \hat m)$ with
$\hat m = n^{-1} \sum_{i=1}^n X_i$ and
$\hat \Sigma = n^{-1} \sum_{i=1}^n (X_i - \hat m) (X_i - \hat
m)^T$. Working with empirically standardized covariates to estimate an
extreme SDR space $E_e$ is equivalent to working with raw covariates
to estimate $\tilde E_e = \Sigma^{-1/2} E_e$ up to remainder terms of
order $O_{\PP}(1/\sqrt{n})$, see Section~\ref{sec:nonstandard-estimation} in the supplementary material. By abuse of notation we use the same
symbols in the present section to denote both the empirical processes
constructed with the $\hat Z_i$'s and the $Z_i$'s.

We start off by deriving an explicit, computationally efficient formula for the matrices $\widehat M_{\text{TIREX1}}, \widehat M_{\text{TIREX2}}$. 
 Let $(\hat Z_{(1)},Y_{(1)}),(\hat Z_{(2)},Y_{(2)}),\dots,(\hat Z_{(n)},Y_{(n)})$ be such that { $Y_{(1)} \geq \cdots \geq Y_{(n)}$}. From the definition of $\hat{C}_{n}$, we have 
$\hat{C}_{n}(u)
= \frac{1}{k} \sum_{i=1}^{\lceil ku \rceil} \hat{Z}_{(i)}
$.
This implies that $\hat C_n$ is piece-wise constant, more precisely for $j\in \{1,\ldots, n\}$, whenever $u \in ( (j - 1)/k  , j /k]$, we have 
$k \hat{C}_{n}(u)=  \sum_{i=1}^{j} \hat{Z}_{(i)} : = \hat S_j$. Since
$\widehat M_{\text{TIREX1}} = \sum_{j=1}^{k} \int_{(j-1)k}
^{j/k}
\hat{C}_{n}(u) \hat{C}_{n}(u)^ {\top} d u$, it follows that
\begin{equation}\label{hat_cumex}
\widehat M_{\text{TIREX1}}  = \frac{1}{k^3}\sum_{j=1}^{k}   \hat S_j  \hat S_j ^{\top}  .
\end{equation}
Evaluating the latter display requires $O(n\log(n))$ operations for sorting the $Y$'s values; $kd$ operations to compute the $\hat S_j $, $j=1,\ldots, k$ (because $\hat S_j$ can be deduced from $\hat S_{j-1}$ with one operation); and $O(kd^2)$ operations to compute the matrix $\widehat M_{\text{TIREX1}}$. The overall cost is then of order $n\log(n) + kd^2$.
Similar arguments regarding the second order matrix  $\widehat M_{\text{TIREX2}} $ lead to  the expression 
\begin{equation}\label{hat_cuvex}
\widehat M_{\text{TIREX2}} = \frac{1}{k^3}\sum_{j=1}^{k} 
 \hat T_j  \hat T_j ^{T}  ,
\end{equation}
with $T_j =  \sum_{i=1}^{j} (\hat{Z}_{(i)} \hat{Z}_{(i)}^T -I_p )$.

The final
step 
is to perform an eigen-decomposition of the estimated matrix
$\widehat M_{\text{TIREX1}} $ (\emph{resp.}
$\widehat M_{\text{TIREX2}}$).  Given the alleged dimension $d$ of
$E_e$, the vector space generated by the $d$ eigen vectors associated
to the $d$ largest eigenvalues of the matrix (with multiplicities,
assuming uniqueness of the corresponding eigen space for simplicity)
constitutes the TIREX estimate $\widehat
E_e$. 
The non standard SDR space can be estimated by multiplying the
obtained directions by $\hat \Sigma^{-1/2} $.

{
  \subsubsection*{ Computational complexity. } 
Evaluating~\eqref{hat_cumex}  requires $O(n\log(n))$ operations for sorting the $Y$'s values; $kp$ operations to compute the $\hat S_j $, $j=1,\ldots, k$ (because $\hat S_j$ can be deduced from $\hat S_{j-1}$ with one operation); and $O(kp^2)$ operations to compute the matrix $\widehat M_{\text{TIREX1}}$. The overall cost is then of order $n\log(n) + kp^2$. Similarly the overall cost for  $\widehat M_{\text{TIREX2}}$ is of order $n\log(n) + kp^4$. Finally the eigen-decompostion based on SVD requires $O(p^3)$ operations.

In contrast the estimation procedure proposed in~\cite{gardes2018tail}
relies on an optimization strategy over a $p-d$-dimensional grid where
$d$ is the reduced dimension, and has an important computational cost
when $d>1$ according to the author (see Sections 3.2 and 4.1 of the
cited reference).  The existing implementation of
\cite{gardes2018tail}'s method is restricted to  $d=1$ and the
experiments conducted in that paper are limited to $p = 4$.  Whether
it is possible to bypass the
curse of dimensionality in~\cite{gardes2018tail}'s framework remains
an open question. For these reasons we limit our comparison with
\cite{gardes2018tail}'s method in our experiments to low dimensional
examples, Models A,C, introduced below.
\subsection{Performance for tail SDR estimation,  synthetic data}\label{sec:SyntheticData} We
  consider three 
  particular instances of the mixture model presented in
  Section~\ref{sec:MixtureGeneric}. 
  The heavy tailed noise variables $\zeta_j, j\le d$ follow identical
  Pareto distributions, $\PP[\zeta_j >t] = t^{-\alpha_2}$ with
  $\alpha_2= 10$.  The short-tailed noise variables
  $\epsilon_j, j\le p-d$ are exponentially distributed,
  $\PP[\epsilon_j>t ]= e^{-\alpha_1 t }, t>0$, with rate parameter
  $\alpha_1 = 10$. The variables
  $(\zeta_j, j\le d ; \epsilon_j, j\le p-d)$ are independent.

  \noindent {\bf Model A.} 
  We consider Case~(i) from the  generic example (continuous covariates) 
  with $\theta=0.5$, $a=1,b=10$.  For simplicity we take all covariate variables
  uniformly distributed over the interval $[a,b] = [1,10]$.  Recall
  that in this context, both \TCI and \TCIG hold. Then according to
  both definitions the $d$-dimensional subspace of $\rset^p$ generated
  by the canonical basis vectors $(e_{p-d+1}, \ldots, e_p)$ is an
  extreme SDR space. We set $p=2$, $d=1$.

\noindent{\bf Model B. } 
Here we set $p=30,d=5$, all other setup remains unchanged comparing with Model A. 

\noindent {\bf Model C. } 
We use the distribution described in Case~(ii) from
Section~\ref{sec:MixtureGeneric}, where the covariates
 are Bernoulli variables. In this context, \TCI holds but \TCIG does
not. We set the Bernoulli parameter to $\tau=0.5$.  To maintain the comparability between TIREX and \cite{gardes2018tail} we set
$p=2,d=1$. 

 \subsubsection*{ Experimental setting.} 
 The sample size is set to $n = 10^4$ for Models A and C, and to
 $n=10^5$ for Model~B. The TIREX matrices following \eqref{hat_cumex}
 and \eqref{hat_cuvex} are computed for $150$ different
 values of $k$ within the range $\llbracket n/100, n \rrbracket$.
 The orthogonal projection on the subspace generated by their first $d$  eigen vectors constitutes our estimates $\hat P_e$.  
 In other words  we consider
 for simplicity that $d$ is known by the user, as discussed in  Remark~\ref{rem:choicekd}. 
 The quality of the estimator is
 measured by the squared Frobenius norm of the
 error, 
$ \| \hat P_e- P_e\|_F^2$. 
We evaluate the squared bias
$\| P_e - \mathbb E[ \hat P_e]\|^2_F$, the variance
$\mathbb E[\|\hat P_e - \mathbb E[ \hat P_e]\|^2_F]$, and the MSE
$\mathbb E[\|\hat P_e - P_e\|^2_F]$ using TIREX, based on $N = 200$ repetitions. Thus the maximum relative error of the MSE estimate, \ie the maximum
standard deviation of the estimate divided by the estimate itself, over all
models and all values of $k$, is $0.11$, which is sufficiently small
for a qualitative interpretation of the results.

In addition we compare the relative performances of TIREX1 and 
\cite{gardes2018tail}'s method for Models A and C. 
We leave TIREX2 outside the comparison because  our results (Figure~\ref{fig:fig1}) show  that TIREX1 is a better option in this setting. 
To  alleviate the computational cost we
perform only $N=100$ repetitions and we estimate the projectors for two 
values of $k$, namely $k = n^{2/3} \approx 464$ as recommended in
\cite{gardes2018tail} and  $k= 2000$  which is close to the value minimizing the  MSE with TIREX1 for both models, considering our results below. 

{\bf Results.}  Figure~\ref{fig:fig1} displays the squared bias, variance
and MSE for TIREX1 and TIREX2 as a function of $k$. The curves  illustrate the typical bias-variance trade-off in Extreme Value Analysis regarding the choice of $k$, and confirm  the findings of  Corollary~\ref{cor:weakCV-SIREX-SAVEX}.  Small values of $k$ are associated with large variance, while large values result in a large bias.  Notice that choosing $k=n$ with TIREX1 (\emph{resp.} TIREX2) amounts to applying the standard SIR method CUME  (\emph{resp.} CUVE). Our results show  that the MSE in this case is typically much larger  (due to the bias) than with moderate  $k$'s, namely with $k \approx 2000$ for $n=10^4$ and $k\approx 15 000$ for $n= 10^5$.  

In some cases,  comparatively larger variances occur for $k \approx n/2$. 
We interpret this as an unstable transitional regime between two extremal behaviors: On the one hand, for small values of $k$, only the very largest values  of $Y$ are selected. These  are mostly generated by the second  component $Y_2$  of the mixture model, the heavy-tailed one. 
On the other hand when $k$ is large, both components $Y_1,Y_2$ are equally involved in the computation of $M_{\text{TIREX}}$. 


The variance attached to the second order method TIREX2 tends to be larger than
that of the first order method TIREX1. 
However, when the dimension of the extreme SDR space is greater than one (Model B), TIREX1 fails to recover more than one direction,  and TIREX2 is preferable. 
This fact illustrates the conclusion of Theorem~\ref{th:SIR_extreme}, see also Lemma~\ref{lem:consistentMTIREX}, 
where  a single vector (or a rank-one matrix) is identified in the
limit. TIREX2 does not suffer from this flaw since  the associated limit
in Lemma~\ref{lem:consistentMTIREX} is a matrix offering potentially
more than one direction in the SDR space.
As a conclusion, one should definitely prefer TIREX1 over TIREX2 when
the extreme values of $Y$ are known to be explained by a
single 
linear combination of $Z_1,\dots,Z_p$. Otherwise it is necessary to
resort to TIREX2 to discover additional directions, even though the
estimates may have a higher
variance.  

\begin{figure}[h]
\includegraphics[width=0.99\linewidth]{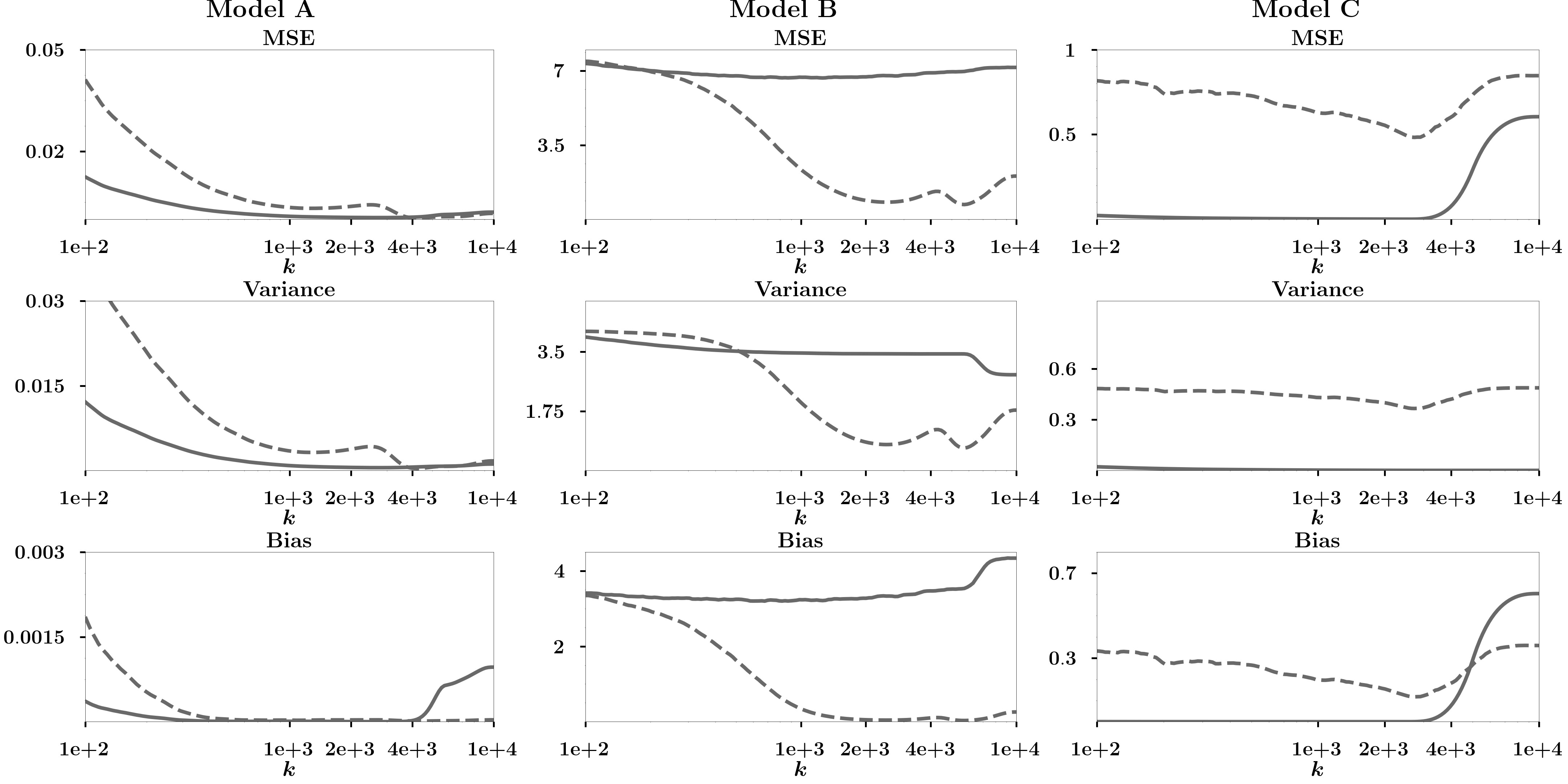}
\caption{Performance in terms of Frobenius norm of the error, as a function of $k$, with TIREX1 (solid line) and TIREX2 (dotted line), in Models A,B,C. Mean squared error, bias and variance computed over $100$ repetitions. 
}
\label{fig:fig1}
\end{figure}

Table~\ref{fig:simu-compare-gardes} displays the results of the
comparison with \cite{gardes2018tail}'s method in terms of MSE and
execution time. In Model A where \cite{gardes2018tail}'s assumptions
are satisfied, \cite{gardes2018tail}'s method  performs better than TIREX for
the two values of $k$ considered. However its execution time, even in this low dimensional setting is several
orders of magnitude higher than that of TIREX.
 In Model C, as suggested by the theory, \cite{gardes2018tail}'s method fails to recover the tail SDR space (in the sense of \TCI, not \TCIG). 
 By contrast, TIREX can recover the tail SDR space within very short execution time.

 \begin{table}[h]
   \centering
   	\begin{tabular}{|c|c|c||c|c|}
          \hline
          &{\tiny \bf Model A, TIREX1 } & {\tiny \bf   Model A, \cite{gardes2018tail} } & {\tiny \bf    Model C, TIREX1 } &{\tiny \bf   Model C, \cite{gardes2018tail} } \\ \hline
          $k=464$   &  $2. 10^{-3} \,( 2\, s )$    & $4 . 10^{-4} \,( 6\, h )$   & $4 . 10^{-3} \,( 2.3 \, s)$ &   $1  \,(4.3 \,h )$ \\ \hline
          $k=2000$  & $5.10^{-4}\, (1.7 \,s)$ & $ 5.10^{-5}\, (6.5 \,h) $   & $9. 10^{-4}\,(3.2\, s )$ &   $0.8\,( 8.5\, h)$ \\ \hline 

	\end{tabular}
	\caption{
          MSE  for  TIREX and \cite{gardes2018tail}'s
          method in  Models A and C,  $100$ replications. Execution times on a standard laptop are in brackets, with $h$ and $s$ indicating hour and second respectively.
        } 
	\label{fig:simu-compare-gardes}
\end{table}


\subsection{Predicting tail events  with TIREX on real datasets}\label{sec:AD}

We now investigate the relevance of TIREX as a dimension reduction
tool for  predicting unusually large values of $Y$.  As
  explained in Remark~\ref{rem:TCI_AMrisk}, this may be viewed as a
  classification task: predict an exceedance $\{Y>y\}$ with the help
  of $p$ covariates $X\in \mathbb R^p$.  Reducing the dimension allows
  to escape the curse of the dimensionality using the projected
  covariates, however it generally induces a bias which may influence the
  (weighted) risk of an error. The most important observation in
  Remark~\ref{rem:TCI_AMrisk} is that, if $\tailindep{Y}{X}{P_eX}$,
  the bias term vanishes in the limit $y\to\supy$. Since TIREX aims
  precisely at estimating $P_e$ such that $\tailindep{Y}{X}{P_eX}$, a
  reasonable hope is that it would generally perform better than other
  dimension reduction algorithms targeting different reduction
  subspaces $P \neq P_e$ that would not enjoy this property.
  
\subsubsection*{Experimental setting.}
We follow a two-steps procedure: first, run a dimension reduction
algorithm (TIREX or another existing method) and project the
covariates $X_i$ on the estimated SDR space; second apply a
classification algorithm to predict the event $Y_i > y$ with the help
of the projected covariates.  For all dimension reduction methods
entering the comparison, the dimension of the reduced subspace is set
to $d=2$.

Throughout our experiments the second step is fixed: We use a nearest
neighbors algorithm with a number of neighbors set to $5$.  In the end
the performance of the competing dimension reduction methods is
measured in terms of the AM risk~\eqref{eq:AMrisk} and the AUC (Area
under the ROC Curve) of the nearest neighbors classifier trained on
the reduced covariates.   The number of observations $k$ in TIREX is
selected based on $5$-fold cross-validation with the AUC criterion.

\subsubsection*{Competitors.}   TIREX is compared with several
  alternative methods using the full dataset for estimation, not only
  the subset associated with the largest values of the target.  Namely
  we consider in a supervised setting the standard SDR estimates
  obtained with the CUME and CUVE methods introduced in Section~\ref{sec:background-SIR}. In an unsupervised setting we
  consider routinely used methods available in the Python Scikit-learn
  package~\cite{scikit-learn}, namely Principal Component Analysis
  (PCA), Singular Value Decomposition (SVD) which is a non-centered
  version of PCA, Locally Linear Embedding (LLE), and Isomap (IMP).
  The latter two methods are non-linear generalizations of of PCA
  (\cite{Roweis-LLE},~\cite{Tenenbaum-IMP}, see also
  \cite{chojnacki2009note,NIPS2003-bengio}) which are widely applied
  in many contexts such as data visualization \cite{LLE-vis,Tenenbaum-IMP},
  or classification \cite{LLE-IMP-classif}, among others.  Considering the
  dimensions $p \in \llbracket 18, 103 \rrbracket $ of the datasets
  described below, \cite{gardes2018tail}'s method for dimension
  reduction could not be included in the comparison for the
  algorithmic complexity reasons described above. %
} %

\subsubsection*{Data sets.}  Eight datasets are used. Three of them come  from
the UCI repository\footnote{\url{https://archive.ics.uci.edu}}:
\textit{Residential} ($372$ apartment sale prices, with $103$
covariates); \textit{crime} ($1994$ per capita violent crimes with
$122$ socio-economic covariates); \textit{Parkinsons} ($5875$ voice
recordings along with $25$ attributes).\,Three other datasets come
from the Delve
repository\footnote{\url{http://www.cs.toronto.edu/~delve/data/datasets.html}}:
\textit{Bank} ($8192$ rejection rates of different banks, with $32$
features each);\,\textit{CompAct} ($8192$ CPU's times with $27$
covariates); \textit{PUMA32} ($8192$ angular accelerations of a robot
arm, with $32$ attributes). Finally, two other data are obtained from
the LIACC
repository\footnote{\url{https://www.dcc.fc.up.pt/~ltorgo/Regression/DataSets.html}}:
\textit{Ailerons} ($13750$ control action on the ailerons of an
aircraft with $40$ attributes) and \textit{Elevator} ($16559$ control
action on the elevators of an aircraft with $18$ attributes). 

\subsubsection*{Results.}
For all
datasets, $y$ is chosen equal to
the 
$0.98$-quantile of the target $(Y_i)_{i=1,\ldots, n}$ except for
\textit{Residential} where the $0.90$-quantile has been used to
counterbalance the small sample
size. 
The results in terms of AM risk and AUC are  summarized in Tables~\ref{fig:Real-data-AMRISK} and \ref{fig:Real-data-AUC} respectively. In the vast majority of cases,  TIREX1 or TIREX2  performs better than the other methods. On these examples, TIREX1 is often superior to TIREX2, which indicates that the added flexibility introduced by the second order moments does not compensate for the increased variance.  

\begin{table}[h]
  \centering
  \begin{tabular}{|l|l|l|l|l|l|l|l|l|}
    \hline
    & TIREX1 & TIREX2 & CUME  & CUVE & PCA & SVD & LLE & IMP \\ \hline
    Bank & 0.434 & \textbf{0.378} & 0.42 & 0.392 & 0.418 & 0.474 & 0.486 & 0.432 \\ \hline
    Crime & \textbf{0.412} & 0.5 & 0.471 & 0.47 & 0.502 & 0.469 & 0.47 & 0.5 \\ \hline
    CompAct & \textbf{0.208} & 0.279 & 0.287 & 0.313 & 0.242 & 0.243 & 0.271 & 0.253 \\ \hline
    Residential & \textbf{0.158} & 0.353 & 0.421 & 0.447 & 0.479 & 0.479 & 0.49 & 0.49 \\ \hline
    Parkinsons & \textbf{0.252} & 0.346 & 0.268 & 0.346 & 0.469 & 0.469 & 0.455 & 0.47 \\ \hline
    Puma32 & 0.492 & 0.501 & 0.5 & 0.5 & 0.5 & 0.5 & 0.501 & \textbf{0.49} \\ \hline
    Elevators & \textbf{0.446} & \textbf{0.446} & 0.471 & 0.463 & 0.5 & 0.5 & 0.5 & 0.5 \\ \hline
    Ailerons & \textbf{0.307} & 0.329 & 0.314 & 0.33 & 0.498 & 0.499 & 0.498 & 0.501 \\ \hline
	\end{tabular}
	\caption{
          AM risk  of the nearest neighbors classifier with reduced covariates obtained with different dimension reduction methods. } 
	\label{fig:Real-data-AMRISK}
\end{table}

\begin{table}[h]
  \centering
  \begin{tabular}{|l|l|l|l|l|l|l|l|l|}
    \hline
    & TIREX1 & TIREX2 & CUME  & CUVE & PCA & SVD & LLE & IMP \\ \hline
    Bank & \textbf{0.771} & 0.696 & 0.698 & 0.684 & 0.736 & 0.689 & 0.608 & 0.65 \\ \hline
    Crime & 0.666 & 0.67 & 0.616 & 0.686 & 0.678 & \textbf{0.773} & 0.672 & 0.661 \\ \hline
    CompAct & 0.893 & 0.887 & \textbf{0.899} & 0.871 & 0.876 & 0.874 & 0.868 & 0.885 \\ \hline
    Residential & \textbf{0.902} & 0.827 & 0.674 & 0.745 & 0.667 & 0.659 & 0.666 & 0.694 \\ \hline
    Parkinsons & \textbf{0.901} & 0.818 & 0.852 & 0.82 & 0.742 & 0.753 & 0.743 & 0.748 \\ \hline
    Puma32 & \textbf{0.711} & 0.578 & 0.616 & 0.515 & 0.587 & 0.577 & 0.537 & 0.547 \\ \hline
    Elevators & 0.686 & \textbf{0.694} & 0.615 & 0.672 & 0.528 & 0.537 & 0.514 & 0.514 \\ \hline
    Ailerons & \textbf{0.853} & 0.834 & 0.828 & 0.832 & 0.502 & 0.515 & 0.514 & 0.525 \\ \hline
	\end{tabular}
	\caption{ AUC of the nearest neighbors classifier with reduced covariates obtained with different dimension reduction methods.} 
	\label{fig:Real-data-AUC}
\end{table}



\begin{supplement}
  \stitle{}
  \sdescription{The supplementary material  placed below the bibliography 
    contains proofs, additional examples and discussions regarding existing notions of Tail Conditional Independence, and extensions to non-standardized covariates. }
\end{supplement}




\bibliographystyle{imsart-nameyear} 
\bibliography{bib_SIR}      


\pagebreak

\appendix
\counterwithin{lemma}{section}
\counterwithin{proposition}{section}
\counterwithin{remark}{section}
\counterwithin{cor}{section}

\begin{center}
\noindent  {\LARGE \centering \bf Tail Inverse Regression:  dimension reduction for prediction of extremes.  \\ Supplementary material.}
\end{center}
~\\

          \noindent This supplement 
contains proofs, additional examples and discussions regarding existing notions of Tail Conditional Independence, and extensions to non-standardized covariates.
        Section and equation numbers in the supplement start with a letter, to distinguish them from those in the paper.





\section{Proofs for Remark~\ref{rem:TCI_AMrisk}}\label{sec:proof_rem1}
In this section, for the sake of completeness,  we prove  two facts regarding classification with the  AM risk in the \emph{full problem} defined in Remark~\ref{rem:TCI_AMrisk} from the main paper. First the classifier 
\begin{equation}
  \label{eq:ExpressionBayesClassif}
  h^*(x) =  \un\{\eta(x) > \pi \}
\end{equation}
is a minimizer of the AM risk ; Second,  the associated Bayes risk is given by
\begin{equation}
  \label{eq:BayesRiskProof}
  \AMR(h^*) = \EE[ \min\Big( \frac{\eta(X)}{\pi}, \frac{1 - \eta(X)}{1-\pi} \Big) ]. 
\end{equation}

We introduce the AM loss function
\[
\AMl(\hat t, t) = \frac{1}{1-\pi} \un\{ \hat t = 1, t=0\} + \frac{1}{\pi} \un\{ \hat t = 0, t = 1 \}
\]
so that for any classifier, $\AMR(h) = \EE[ \AMl(h(X), T) ] $. Consider now  the conditional AM risk
\[\widetilde{\AMR}(h,x) = \EE[ \AMl(h(X), T) \given X=x ],  \]
thus $\AMR(h) = \EE[\widetilde{\AMR}(h,X)]$. We also have 
\begin{align}
  \widetilde{\AMR}(h,x) & = \frac{1}{1-\pi}\un\{h(x) = 1\}(1 - \eta(x)) +
                          \frac{1}{\pi}\un\{h(x) = 0\} \eta(x) \nonumber \\
  & = \frac{1 - \eta(x)}{ 1-\pi} + \un\{h(x) = 0\} \Big[  \frac{\eta(x)}{\pi} - \frac{1 - \eta(x)}{1-\pi} \Big].  \label{eq:conditionalAMRisk}
\end{align}
Also, the classifier in \eqref{eq:ExpressionBayesClassif} may be written equivalently as
$  h^*(x) =  \un\Big\{\frac{ \eta(x)}{\pi} > \frac{1 - \eta(x)}{1-\pi} \Big\}$. 
Thus for any classifier $h$, we may write the difference in conditional risks as
\begin{align*}
  \widetilde{\AMR}(h,x) - \widetilde{\AMR}(h^*,x)
  & = \frac{\eta - \pi}{\pi(1-\pi)} \big[  \un\{h(x) = 0 \} - \un\{h^*(x) = 0 \}\big] \\
  & = \left| \frac{\eta - \pi}{\pi(1-\pi)} \right| \un\{ h(x) \neq h^*(x) \}
\end{align*}
The latter display is nonnegative, which shows that $h^*$ defined in~\eqref{eq:ExpressionBayesClassif} indeed minimizes the AM risk.
Turning to our second claim, notice that we may write, using~\eqref{eq:conditionalAMRisk}, 
\begin{align*}
\widetilde{\AMR}(h^*,x)&  =
\begin{cases}
  \eta(x)/\pi & \text{ if } \eta(x)/\pi  > (1-\eta(x))/(1-\pi)\\
  (1 - \eta(x))/(1-\pi)& \text{ otherwise } 
\end{cases} \\
                         & = \min\Big( \frac{\eta(x)}{\pi}, \frac{1-\eta(X)}{1-\pi} \Big). 
\end{align*}
This proves~\eqref{eq:BayesRiskProof}.


    \section{Proofs for Section~\ref{sec:MixtureGeneric} and additional comments }\label{sec:compareGardes_long}
    In this section we provide the full proofs regarding our examples and counter-examples from Section~\ref{sec:MixtureGeneric} regarding the generic mixture model. On this occasion we  
    conduct a thorough comparison between the two definitions of tail conditional independence \TCI and \TCIG, see Equations~\ref{eq:newdefTCI} and~\ref{eq:defGardes} in the main paper. 
  For convenience write
  $S(y)= \PP(Y>y); S(y, W)= \PP(Y>y|W); S(y,W,V) =
  \PP(Y>y|W,V)$.  
  The relevant quantities  are respectively the ratios
  \begin{equation}\label{eq:ratios-TCI-TCIG}
R(y,V,W) =  \frac{S(y, V,W)- S(y, W)}{S(y)} \;, \text{ and } \tilde R(y,V,W) =   \frac{S(y, V,W)- S(y , W)}{S(y, W)}. 
\end{equation}
The \TCI condition is that    $\EE | R(y,V,W) | \to 0$ as $y\to\supy$, 
whereas \TCIG means that  $\tilde R(y,V,W) \to 0$ as $y\to\supy$,  almost surely.
 Notice already  that our criterion
\eqref{eq:newdefTCI} is an integrated version of \eqref{eq:defGardes},
with a weight function
\begin{equation}
  \label{eq:defineRho_tcig}
\rho(y,W) = S(y,W)/S(y),   
\end{equation}
such that 
$\rho(y,W)\ge 0$ and $\EE[\rho(y,W)] = 1$ for all $y$. Namely, \TCI means that 
\begin{equation}
  \label{eq:relationTCI-TCIG}
  \EE \, \left| \tilde R(y,V,W) \rho(y,W) \right|  \xrightarrow[y\to\supy]{} 0
\end{equation}

   
\subsection{Additional notations  regarding the generic mixture model from Section~\ref{sec:MixtureGeneric}}\label{sec:additionalNotationMixture}
  We introduce in the context of Section~\ref{sec:MixtureGeneric} from the main paper  the additional notations
  \begin{align*}
    S_1(y) = \PP(Y_1>y) = \int S_1(y,v) \ud P_V(v) \;,\quad     S_2(y) =  \PP[Y_2>y] = \int S_2(y,w) \ud P_W(w). 
  \end{align*}
  With these notations, using the independence assumption regarding the pair $(V,W)$ we may write
  \begin{gather*}
    S(y,v,w) = \theta S_1(y,v) + (1-\theta)S_2(y,w) \;;\quad 
    S(y,w)  = \theta S_1(y) + (1-\theta)S_2(y,w) \;;\\
    S(y)  = \theta S_1(y) + (1-\theta)S_2(y). %
  \end{gather*}
Thus, 
  the ratios $R$, $\tilde R$ defined at the beginning of this section and involved in \TCI and \TCIG write respectively
  \begin{align}
    R(y,v,w) =  \frac{\theta(S_1(y,v) -S_1(y) )}{
    \theta S_1(y) + (1-\theta)S_2(y)}\,, \qquad 
    \tilde R(y,v,w)  =  \frac{\theta (S_1(y,v) -S_1(y) )
                      }{\theta S_1(y) + (1-\theta)S_2(y,w)}\,.  \label{eq:ratioWithS_12}
  \end{align} 
  Notice already that
  \begin{align}
    | R(y,v,w) |& 
    \le \frac{\theta}{1-\theta}\frac{S_1(y,v) + S_1(y)}{S_2(y)} \,,\label{eq:majorR}\\
    | \tilde R(y,v,w)| & \le \frac{\theta}{1-\theta}\left( \frac{S_1(y,v)}{S_2(y,w)} + \int \frac{S_1(y,v')}{S_2(y,w)} \ud P_V(v')\right)\,.\label{eq:majorTildeR}  
  \end{align}

  Finally, specializing to the case where $Y_1$ and $Y_2$ follow the mixture model described in the same section of the main paper, the conditional survival functions for  $Y_1,Y_2$ are,  for $y>b$, 
  \begin{equation}
    \label{eq:S_12_specific}
    \begin{aligned}
      S_1(y,v) &= \sum_{i=1}^{p-d}  \un\{v_i>0\} \pi_{i}^{1}  S_{\epsilon}(y/v_i) 
                 \;,\qquad
      S_2(y,w) = \sum_{j=1}^{d}  \un\{w_j>0\} \pi_{j}^{2} S_{\zeta}(y/w_j)  
    \end{aligned}
  \end{equation}


  
  We now discuss the main differences between the two
  definitions. Natural questions to ask are \emph{(i)} whether one
  definition is more appropriate than the other depending on the
  context ; \emph{(ii)} whether one condition is stronger than the
  other, possibly under additional assumptions.

As for Question \emph{(i)}, in spirit,   as reflected by the equivalent condition~(\ref{eq:relationTCI-TCIG}), \TCI is
comparatively more sensitive to values $ W = w $ such that the conditional probability of an exceedance
 $Y>y$  is large, which is an appealing feature for identifying tail risk factors as described in the
introduction.   On the other hand, one advantage of \TCIG's scaling is that the 
ratio $\tilde R$ introduced at the beginning of this section is a \emph{relative} deviation, which is arguably easily
interpretable. However \TCIG's criterion 
takes into account  \emph{all} possible
values $w$, even those such that the conditional distribution of $Y$
given $W=w$ is shorter tailed than the marginal distribution of
$Y$. 
The focus in \TCIG is not exactly on the tail of $Y$'s distribution,
but rather on the tails of the conditional distributions of $Y$ given
$W$.

Before turning to Question \emph{(ii)}, we discuss the differences between the two conditions in terms of mode of convergence.  
\subsection{Convergence almost-surely or in expectation in \TCIG or \TCI}\label{sec:CV-as-expect-in-TCI}  
Almost sure convergence $\tilde R(y,V,W) \to 0 $ as $y\to\supy$
implies $\EE|\tilde R(y,V,W)|\to 0$. Indeed by conditioning on $W$, we
have $\EE\,[\tilde R(y,V,W)] = 0$ so that, denoting by $z_+$
(resp. $z_-$) the negative (resp. positive) part of a real $z$, it
holds that $\EE\, [\tilde R(y,V,W)_+] =\EE\, [\tilde R(y,V,W)_-] $. As
a consequence $$\EE\,|\tilde R(y,V,W)| = 2\EE\,[\tilde R(y,V,W)_-].$$
However for all $y,v,w$, $\tilde R(y,v,w) \ge -1$ so that
$0\le \tilde R(y,V,W)_- \le 1$. By dominated convergence, if
\eqref{eq:defGardes} holds, then also $\EE\,[\tilde R(y,V,W)_-]\to 0$
and the above display implies that $\EE\,|\tilde R(y,V,W)| \to 0$ as
well.  This argument is not valid regarding the tail behaviour of
$R(y,V,W)$ because it is not true that $R(y,V,W)\ge -1$ almost surely.

We are now ready to examine Question \emph{(ii)}, that is, whether one condition (\TCI or \TCIG) implies the other, in general or under simplifying assumptions. 
\subsection{Special case: discrete covariates with finite support }\label{sec:discreteCovar}
In order to build up the intuition,  consider the special case where the
covariates have a finite support. This is a sensible assumption for
real life applications where observations are discretized.

We thus consider here  finitely supported covariates $V \in \{v_1,\ldots, v_m\}$,
$W \in \{w_1,\ldots, w_n\}$. Denote
$ p(v_i) = \PP(V = v_i), p(w_j) = \PP(W = w_j), p(v_i, w_j) = \PP( V =
v_i, W = w_j)$. Assume for simplicity that for all
$(i,j)\in\{1,\ldots,m\}\times \{ 1,\ldots, n\}$, we have
$p(v_i, w_j)>0$.

First, in this case, almost sure convergence and convergence in
expectation are equivalent for both ratios $R$ and $\tilde R$
introduced at the beginning of this section. In other words
  \begin{align}
    \EE|R(y,V,W)|\xrightarrow[y\to\supy]{}0
    &\iff |R(y,V,W)|\xrightarrow[y\to\supy]{}0, \text{ almost surely} \;; 
      \label{eq:equiv_R} \\
    \EE|\tilde R(y,V,W)|\xrightarrow[y\to\supy]{}0
    &\iff |\tilde R(y,V,W)|\xrightarrow[y\to\supy]{}0, \text{ almost surely} \;.
      \label{eq:equiv_tildeR}
  \end{align}
  Indeed
  \[
    \EE|R(y,V,W)| = \sum_{i=1}^m \sum_{j=1}^n p(v_i,w_j)\Big|
    \frac{S(y,v_i,w_j) - S(y,w_j) }{S(y)} \Big|.
\]
The latter display converges to $0$ as $y\to\supy$ if and only if each
terms in the finite summation does, that is, if and only if
$\forall(i,j), R(y,v_i,w_j)\to 0$ as $y\to\supy$. This proves
\eqref{eq:equiv_R}, and the argument for \eqref{eq:equiv_tildeR} is
similar.

Second, \TCIG implies \TCI, meaning that our definition is weaker than
\cite{gardes2018tail}'s in this discrete setting. 
To see this, in view of the equivalence between $L^1$ and almost sure
convergences, it is enough to show that the ratio
$R(y,v_i,w_j)/\tilde R(y,v_i,w_j)$ is uniformly upper bounded when
$y, i$ and $j$ vary.  However for all $(y,i,j)$,
\begin{align*}
  \frac{R(y,v_i,w_j)}{\tilde R(y,v_i,w_j)}
  & = \rho(y,w_j)=  S(y,w_j) / S(y)  
    = \frac{S(y,w_j)}{\sum_{k=1}^n p(w_k) S(y, w_k)}  \le \frac{1}{ p(w_j)}
    \le 1/ \min_{k\le n} p(w_k) < \infty. 
\end{align*}
As a consequence, if $\tilde R(y, V,W)\to 0$ almost surely, then also
$R(y,V,W) \to 0$ almost surely as $y\to\supy$ and the result follows.

  \subsection{Example in the mixture model where both \TCI and \TCIG hold}\label{sec:Example_bothTCI-TCIG}
  We consider the setting of Section~\ref{sec:MixtureGeneric} from the main paper, and in particular  the case where the lower bound of the support of each $W_j$ is positive, $a>0$.
  
We verify that the upper bounds \eqref{eq:majorR} and \eqref{eq:majorTildeR} uniformly converge to $0$. 
First, using \eqref{eq:S_12_specific}, we have 
\begin{align*}
  \frac{S_1(y) + S_1(y,v)}{S_2(y)} &\le 2 \frac{\sup_{v\in[a,b]^{p-d}} S_1(y,v)}{\inf_{w\in[a,b]^d} S_2(y,w)} 
                         \le 2 \frac{S_\epsilon(y/b)}{S_\zeta(y/a)}, 
\end{align*}
where the right-hand-side converges to $0$ as $y\to\infty$ under Condition~\eqref{eq:differentTails}. 
Thus the upper bound in~\eqref{eq:majorR} uniformly converges to $0$ and \TCI\ holds by dominated convergence.

Turning to $\tilde R$, we also have 
\begin{align*}
  \sup_{(v,w)\in[a,b]^{p}} \frac{S_1(y,v)}{S_2(y,w)} \le 
  \frac{\sup_{v\in[a,b]^{p-d}} S_1(y,v)}{\inf_{w\in[a,b]^d} S_2(y,w)}
  \le  \frac{S_\epsilon(y/b)}{S_\zeta(y/a)}   \xrightarrow[y\to\infty]{} 0. 
\end{align*}
Thus, by dominated convergence the right-hand-side of \eqref{eq:majorTildeR} converges to $0$ as $y\to\infty$ so that \TCIG holds as well. 

In the general case the
situation is much more complex and it turns out that neither condition
implies the other, as revealed by the counter-examples constructed in the next two subsections. 

\subsection{Counter-example in the mixture model where \TCI holds but \TCIG does not}\label{sec:TirexNotImpliesGardes}
In contrast to the latter subsection, we now consider the case where
the support of the $W_j$'s includes $0$, so that $a=0$. Namely we take
each variable $V_j,W_j$ following a binary Bernoulli distribution with
parameter $\tau \in (0,1)$
Thus
$\PP[W = (0,\ldots,0) ]= (1-\tau)^d>0$. Notice already that the
right-hand side of \eqref{eq:majorTildeR} is not bounded because  $S_2(y,w= (0,\ldots,0) ) = 0$ for $y>0$.
Also, from~\eqref{eq:ratioWithS_12}, 
\begin{align*}
  \tilde R(y,v,w=(0,\ldots, 0)) &= \frac{ S_1(y,v) - S_1(y) }{S_1(y)}.
\end{align*}
In this specific example  $Y_1$ and $Y_2$ have point masses at $0$ and we have for $y>0$,  
$S_1(y) = \sum_j \pi_j^{1}\tau S_\epsilon(y) = \tau S_\epsilon(y)$ 
while for $v_1= (1,\ldots, 1)$, $S_1(y,v_1) =  \sum_j \pi_j^{1} S_\epsilon(y) = S_\epsilon(y)$.
Thus in the above display,  $\tilde R(y,v_1, 0) = (1 - \tau)/\tau$ for all $y>1$ and \TCIG does not hold.

Finally we  show that \TCI holds by examining the right-hand side
of~\eqref{eq:majorR}. The argument above shows that
\[
\frac{S_1(y,v) + S_1(y)}{S_2(y)} \le \frac{(1+\tau)S_\epsilon(y)}{\tau S_\zeta(y)}\,.
  \]
This proves uniform convergence to~$0$
in~\eqref{eq:majorR} under Condition~\eqref{eq:differentTails}
and concludes the argument.

\subsection{Counter-example where  \TCIG holds but \TCI does not} 
\label{sec:GardesNotImpliesTirex}

In this example we depart from the mixture model forming the basis of  the two latter examples.
The idea behind 
is to build the survival functions in such a way that $\limsup_{y\to\infty} \rho(y,W) = \infty$ (see~(\ref{eq:defineRho_tcig}) for the definition of $\rho$), with probability one, while \TCIG holds. 


In addition to  the notations introduced at the beginning of this section, we introduce
the ratio
$$q(y,v,w) = S(y,v,w)/S(y,w).$$ Thus
$\tilde R(y,v,w) = q(y,v,w)-1 $ and  $R(y,v,w) = (q(y,v,w)-1)\rho(y,w)$. 
We denote respectively by $P_W, P_{V,W}$ the marginal distribution of $W$ and the joint distribution of $(V,W)$. Here we define  $V$, $W$ as independent uniform variables,   $P_W = P_V = \mathcal{U}_{[-1/2,1/2]}$ and  $P_{V,W} = P_V\otimes P_W$. We shall build $(S,\rho,q)$ such that  
h$|q(y,V,W)-1|\to 0$ as $y\to \infty$, almost surely, (so that \TCIG  holds)  while $\limsup \EE[|q(y,V,W)-1|\rho(y,W)]>0$ as $y\to\infty$ (so that \TCI  does not hold).
  
The functions $S(y), q(y,v,w), \rho(y,w)$ define  a  joint distribution of $(Y,V,W)$ with no mass at the right end point of $Y$ 
if conditions~\eqref{cond_S}~\eqref{cond_rho} and~\eqref{cond_q} below hold.
\begin{gather}
  S \text{ is non-increasing }, \quad \lim_{y\to\supy} S(y) = 0,\quad  S(y)\ge 0 ;  \label{cond_S}
\end{gather}
\begin{gather}
 P_W \text{-almost surely}, \text{  the function } y\mapsto \rho(y,W) S(y)   \text{ is non-increasing, and  } \nonumber \\
  \lim_{y\to\supy} \rho(y,W) S(y) = 0, \quad \rho(y,W)\ge 0, \quad \EE[\rho(y,W)] =1 \, ,\forall y ;  \label{cond_rho}
\end{gather}
\begin{gather}
  P_{V,W}\text{-almost surely}, \text{ the function } y\mapsto q(y,V,W) \rho(y,W) S(y)   \text{ is non-increasing, and  } \nonumber \\
  \lim_{y\to\supy}q(y,V,W) \rho(y,W)  S(y) = 0, \quad q(y,V,W) \ge 0, \quad  \EE[q(y,V,W)\given W] =1 \,, \forall y. \label{cond_q} 
\end{gather}

\subsubsection{Construction of $S(y),\rho(y,w)$}\label{sec:construct_S_rho}

We let $S(y) = e^{-y}$,  $y\ge 0$,  and 
we construct $\rho$ such that   $\PP[\limsup_{y\to \infty} \rho(y,w) = \infty] = 1$ while \eqref{cond_rho} is satisfied.  To this end  define for $n\ge 2$, and $0\le j\le n$, 
\begin{equation}
  \label{eq:def_Ln}
  L_n = \sum_{k<n, k\ge 2} k^2 \;;\qquad  L_{n,j} = L_n + jn. 
\end{equation}
Thus $L_2=0, L_3 = 4$, $L_n \le n^3$ for $n\ge 2$ and $ L_{n,n} = L_{n+1}$. Also $\rset_+ =
\sqcup_{n\ge 2} \sqcup_{0\le j<n} [L_{n,j}, L_{n,j+1})$. 
Also for $y\ge 0$, we denote by $(n(y), j(y))$ the unique pair of integers such that $y\in [L_{n,j}, L_{n,j+1})$.   

For $n\ge 2,0\le j<n$,  we define $\rho(y,w)$ for $y \in  [L_{n,j}, L_{n,j+1})$ and $w \in [-1/2,1/2]$ as follows: let $I_{n,j} = [1/2 - (j+1)/n, 1/2 - j/n]$, then  
\begin{equation}
  \label{eq:def_rho}
  \rho(y, w) =
  1 + w + \frac{n }{4 \pi} \sin\Big( \pi (y - L_{n,j}) / n \Big)
  \left[ \un\{w\in I_{n,j}\} - \un\{w\notin I_{n,j}\}/(n-1)\right]\,.
\end{equation}
Notice that for all $w\in[-1/2,1/2]$, the function $y\mapsto \rho(y,w)$ is continuous.
Also for all $w\in[-1/2,1/2]$ we have $\limsup_y \rho(y,w) = +\infty$. Indeed for any fixed  $n\ge 2$, let $j$ such that $w \in  I_{n,j}$. Then letting 
\[
 y_n = L_{n,j} + n/2, 
  \]
  we have $\rho(y_n, w) = w + n/(4\pi) \ge n/(4\pi) -1/2.$  The  sequence $y_n$ converges to $\infty$ and is  such that $\rho(y_n,w)\to \infty$ as $n\to \infty$,  which proves the claim.

  We now verify that the conditions gathered in \eqref{cond_rho} hold.
  \begin{enumerate}
  \item 
  First for all $y\ge 0$,
\begin{align*}
  \EE[\rho(y,W)]
  & =  1 + \EE[W ] +
   \frac{n }{4 \pi} \sin\Big( \pi (y - L_{n,j}) / n \Big)
  \left[ 1/n - (n-1)/(n(n-1))\right] =1.
                \end{align*}

\item We show that for all $y$, $\rho(y,W) \ge 1/3$ almost surely. By construction, $\rho(y,W) \ge 1/2  - \frac{n(y) }{4(n(y)-1) \pi} $. Since
  $m/(m-1)\le 2$  for $m\ge 2$, we obtain
  \[\rho(y,W) \ge 1/2  - \frac{2 }{4 \pi} \ge 1/2-1/6 = 1/3 . \]
\item 
  We now show that $y\mapsto S(y)\rho(y,w)$ is non increasing for all $w\in[-1/2,1/2]$.  Since both  $S$ and $\rho$ are continuous functions of $y$, with derivatives from the right which we denote respectively $S'(y)$ and $\rho'(y,w)$, we need to show that $\rho'(y,w) < - \rho(y,w)S'(y)/S(y)$. Here $S'(y)/S(y) = - 1$, and from the above point we obtain $- \rho(y,w)S'(y)/S(y)\ge 1/3$. 
To conclude we show that  \[\forall y\ge 0, w\in [-1/2,1/2],  \rho'(y,w) \le 1/4. \]  Let $y>0$ and $(n,j) = (n(y), j(y))$ as above.  On the one hand if
$w \in I_{n(y), j(y)}$ 
we have
$0\le \rho'(y,W) \le 1/4$. On the other hand if
$w \notin I_{n(y),j(y)}$, we have $\rho'(y,w)<0$. In both cases $\rho'(y,w)\le 1/4\le 1/3 \le -\rho(y,w) S'(y)/S(y)$, which conludes the argument.

\item Finally we verify that $\lim_{y\to\infty} \rho(y,W)S(y)=0$,
  almost surely. To see this, notice that for all
  $y>0, w\in[-1/2,1/2], |\rho(y,w)|\le 3/2 + \frac{n(y)}{4\pi}$. Now since $L_n \ge n^2$,
  $\{n: L_n\le y \} \subset \{n: n^2\le y \}$, so that 
    $  n(y)  =  \sup\{n: L_n\le y \}  \le \sup\{n: n^2\le y \}  \le \sqrt{y}. $
  Thus $|\rho(y,w)|e^{-y} \le (3/2 + \sqrt{y}/(4\pi) )e^{-y} \to 0$ as $y\to \infty$. 
  \end{enumerate}

  \subsubsection{Construction of $q(y,v,w)$}\label{sec:construct_q}
    Recall $n(y)$ from the beginning of the above paragraph. Define
    \begin{equation}
      \begin{aligned}
        q(y,v,w) =& 1 + v \bigg[\un\Big\{w>\frac{1}{2} - \frac{1}{n(y)}\Big\}+
                    \un\Big\{w\le \frac{1}{2} - \frac{1}{n(y)} \Big\}
                    \exp\Big(- \frac{y +  \left\lceil\frac{ 1}{1/2 - w }\right\rceil}{4} \Big)\bigg]
      \end{aligned}
    \label{eq:def_q}
  \end{equation}

  
We now verify that  the function $y\mapsto q(y,v,w) S(y,w) = S(y,v,w)$ is non increasing. Notice already that all the other constraints gathered in~\eqref{cond_q} are satisfied. Since for fixed $(v,w)$, both  $y\mapsto q(y,v,w)$ and  $y\mapsto S(y,w)$ are continuous,  it is enough to verify that the derivative from the right of $y\mapsto q(y,v,w) S(y,w)$ is negative or null, that is (since $q\ge 1/2$ is positive), we need to ensure that
  \begin{equation}\frac{q'(y,v,w)}{q(y,v,w)} \le - \frac{S'(y,w)}{S(y,w)}. \label{eq:condmonot}
  \end{equation}
  With our definition of $S(y,w)$ from Subsection~\ref{sec:construct_S_rho},
    \begin{align*}
    - S'(y,w)/S(y,w) &= (\rho(y,w) - \rho'(y,w))/\rho(y,w) 
                     = 1 - \rho'(y,w)/\rho(y,w)  
    \ge 1 - \frac{1/4}{1/3} 
    = 1/4. \end{align*}
  If we denote $y(w) =   y -  \left\lceil\frac{ 1}{1/2 - w }\right\rceil$, we have 
  \[
q'(y,v,w)/q(y,v,w) = -\frac{1}{4} \un\Big\{w\le \frac{1}{2} - \frac{1}{n(y)}\Big\} v \exp(- y(w)/4) / (1 + v \exp(-y(w)/4) ).    
    \]
    The above display is always less than $1/4$ so that~\eqref{eq:condmonot} holds for all $y>0$ and    $v,w \in [-1/2,1/2]$ and \eqref{cond_q} is satisfied.  This fact combined with the argument in Subsection~\ref{sec:construct_S_rho} implies that the functions $(S, \rho,q)$ define a proper joint distribution for $(Y,V,W)$.


    \subsubsection{Conclusion}
\label{sec:conclu_gardes_notimplies_tirex}
We have constructed a joint distribution for $(Y,V,W)$ in Sections~\ref{sec:construct_S_rho},~\ref{sec:construct_q},  
such that $P_{V,W}$-almost surely, $q(y,V,W)\to 1$ as $y\to\infty$, as can be seen immediately from the definition of $q$ in~\eqref{eq:def_q}. Thus $(Y,V,W)$ satisfy  \TCIG. 
However, for all $n\ge 0$, let  $y_n= L_n + n/2$ (see Subsection~\ref{sec:construct_S_rho}), so that  by construction $n(y_n) = n$. Notice that  $I_{n,0} = [1/2-1/n, 1/2]$ and for $w\in I_{n,0}$, we have $\rho(y_n,w) = 1+ w + n/(4\pi)\ge n/16$ and $q(y_n,v,w) = 1 + v$. Thus  
\begin{align*}
 \EE[|R(y_n,V,W)|] &=  \EE[ | q(y_n,V,W) -1| \rho(y_n,W)]\\
  & \ge \EE[ | q(y_n,V,W) -1| \rho(y_n,W) \un\{W >1/2-1/n\} |]  \\
  & =  \PP[W>1/2-1/n]  \EE[| q(y_n,V,W) -1| \rho(y_n,W) \given W \in I_{n,0}] \\
  & \ge  \frac{1}{n}  \EE[ |V|  n /16] \ge \EE[|V|] /16 = 1/64. 
\end{align*}
We have shown that $
\limsup_{y\to\infty} \EE[|R(y_,V,W)|] >0, $ 
    so that \TCI does not hold, which concludes the counter-example. 
    \subsection{Additive Mixture Model (Remark 2 in the main paper)}
  We end this section devoted to examples with a  full derivation of the  additive mixture example mentioned in Remark 2 from the main paper. 
    We consider here an additive mixture $Y = Y_1 + Y_2$. The first (light-tailed) component is  $Y_1 = V \in [a,b]$ ($-\infty <a<b<\infty$) ; and  the second (heavy-tailed) one is $Y_2 = W \xi$ where $ W\in [c,d]$ with $0<c<d<\infty$, and  $\xi$ has a continuous survival function   $S_\xi(y):=1-F_{\xi}(y)$ satisfying $q(y)=y^\alpha S_\xi(y)\to C $ as $y\to\infty$, for some $\alpha, C>0$. 
    In addition, we assume that $V$ and $W$ are independent.

We show that \TCI holds, that is $\tailindep{Y}{V }{W}$.    
Introducing the function
\[g(v , w ,y) = w^{-\alpha}y^{\alpha} 
  S_\xi[(y - v) / w] ,\]
 we have that
\begin{equation}
  \label{eq:unifCVex2}
  \begin{aligned}
    &\sup_{[v,w] \in [a,b]\times [c,d] } \big|g(v,w,y)-C| \\
    &= \sup_{v,w\in [a,b]\times [c,d]}  \Big| \Big(1 - \frac{v}{y}\Big)^{-\alpha}
    \Big(\frac{y - v}{w} \Big)^\alpha S_\xi\Big[\frac{y - v}{ w} \Big] - c \Big| \xrightarrow[y\to\infty]{} 0. \quad 
  \end{aligned}
\end{equation}
The last limit relation follows from  $\frac{y - v}{ w}  \to \infty$ and
$1 - v/y\to 1$, uniformly for $v,w\in [a,b]\times [c,d]$ as $y\to\infty$. We have that
\begin{align*}
&\mathbb P(Y>y | V,W) = S_\xi( (y - V) / W)  = W^\alpha y^{-\alpha} g(V,W,y) \\
  & \mathbb P(Y>y | W) 
  = W^\alpha  y^{-\alpha} \int_a^b  g(v, W,y)
    f_1(v) \ud v,\\
  & \mathbb P(Y>y )
  = y^{-\alpha} \int_c^d\int_a^b w^{\alpha} g(v,w,y)
    f_1(v) f_2(w)\ud v\ud w.
\end{align*}
Thus
\begin{align*}
  \frac{\PP[Y>y |X] - \PP[Y>y|W]}{\PP[Y>y]} & = \frac{W^{\alpha}\left\{g(v,W,y) - \int_a^b  g(v, W,y) f_1(v) \ud v\right\}}{\int_c^d\int_a^b w^{\alpha} g(v,w,y)
                                                f_1(v) f_2(w)\ud v\ud w}
\end{align*}
By \eqref{eq:unifCVex2} and dominated convergence, $\int_c^d\int_a^b g(v,w,y)
f_1(v) f_2(w)\ud v\ud w \to c \EE(W^\alpha)$ as $y\to\infty$. Regarding the  numerator, Cauchy's inequality implies that
\begin{align*}
  &\EE \Big| W^{\alpha}\left\{g(v,W,y) - \int_a^b  g(v, W,y) f_1(v) \ud  v\right\}\Big|\\
\leq & \sqrt{\EE W^{2\alpha}}\sqrt{\EE\left\{g(v,W,y) - \int_a^b  g(v, W,y) f_1(v) \ud  v\right\}^2}. 
\end{align*}
The right-hand side tends to zero by noting that $\EE W^{2\alpha}<\infty$ and applying the dominated convergence theorem twice to the second term. The proof is complete. 


\section{Proof of Theorem~\ref{th:SAVEx}}\label{ap:proofTIREX2principle}
  We need to show that
  \begin{equation}
    \label{eq:toshowTheorem2}
Q_e \EE[ ZZ^\top -I  \given Y>y]  \xrightarrow[y\to\supy]{} 0.    
  \end{equation}
  Notice first that from (LC)~\eqref{eq:LC} and (CCV)~\eqref{eq:CCV} it holds that
  $Q_e(\Var{Z \,|\, P_e Z} -I_p) =  - Q_e P_e = 0$. Thus also \[Q_e\EE[ZZ^\top -I_p \given P_e Z] = Q_e(\Var{Z \,|\, P_e Z} -I_p) + Q_e E[Z\,|\, P_e Z] E[Z\,|\, P_e Z]^\top = Q_e P_e = 0.\]
  As a consequence
  \begin{align}
    & Q_e \EE[(ZZ^\top - I_p)\un\{Y>y\}]  = Q_e\EE[ \EE\Big((ZZ^\top - I_p)\un\{Y>y\} \,|\, P_e Z, Y  \Big) ]\nonumber \\
    & = Q_e\EE\Big[\Big(\EE[ZZ^\top - I_p \given P_e Z, Y]  - 
      \EE[ZZ^\top -I_p \given P_e Z]\Big) \un\{Y>y\}\Big] \nonumber\\
    & = Q_e \EE\Big[\Big(\EE[ZZ^\top  \given P_e Z, Y] - \EE[ZZ^\top  \given P_e Z]\Big) \un\{Y>y\}\Big] \nonumber
\end{align}
Thus in order to show~\eqref{eq:toshowTheorem2} it is sufficient to show that  for all pair $(i,j)\in\{1,\ldots, p\}^2$, writing $p_y = \PP[Y>y]$,
\begin{equation}
  \label{eq:toshowth2-a}
  p_y^{-1} \EE\Big[\Big(\EE[Z_iZ_j \given P_e Z, Y] - \EE[Z_iZ_j  \given P_e Z]\Big) \un\{Y>y\}\Big] \xrightarrow[y\to\supy]{} 0
\end{equation}
Fixing $i,j\le p$ and   following the same path as in Theorem~\ref{th:SIR_extreme} we decompose the left-hand side of \eqref{eq:toshowth2-a} for any $A>0$ as a sum $C_1(A,y) + C_2(A,y)$ where
\begin{equation*}
  \begin{aligned}
    C_1(A,y) &=   p_y^{-1} \EE\Big[\Big(\EE[Z_iZ_j \un\{\|Z\| \le A\}\given P_e Z, Y] - \dots \\
    & \dots 
    \EE[Z_iZ_j \un\{ \|Z\| \le A\} \given P_e Z]\Big) \un\{Y>y\}\Big] \;,\\
C_2(A,y)& =     p_y^{-1} \EE\Big[\Big(\EE[Z_iZ_j \un\{\|Z\|  > A\}\given P_e Z, Y] -\dots \\
 & \dots    \EE[Z_iZ_j \un\{ \|Z\| >  A\} \given P_e Z]\Big) \un\{Y>y\}\Big]. 
  \end{aligned}
\end{equation*}
Point~(iii) 
of Proposition~\ref{prop:equi_tail_cond_exp} with 
$h = 1$ and $g(Z) = Z_iZ_j\un\{ \|Z\| \le   A\}$ ensures that $C_1(A,y)\to 0$ as $y\to\supy$ for any fixed $A$. On the other hand, using that $|Z_iZ_j|\le \frac{1}{2} (|Z_i|^2 + |Z_j|^2) \le \frac{1}{2}\|Z\|_2^2 \le c \|Z\|^2$ for some constant $c$  we may bound $|C_2(A,y)|$ as   follows, 
\begin{align*}
  | C_2(A,y) |& \le p_y^{-1}c  \EE\Big[\EE[ \|Z\|^2 \un\{\|Z\|  > A\}\given P_e Z, Y] \un\{Y>y\}\Big]  + \dots \\
              & \dots  p_y^{-1}c  \EE\Big[\EE[ \|Z\|^2 \un\{\|Z\|  > A\}\given P_e Z] \un\{Y>y\}\Big]\\
              & = p_y^{-1}c \bigg(\EE[ \|Z\|^2 \un\{\|Z\|  > A\}\un\{Y>y\}] +
                \EE\Big(\EE[ \|Z\|^2 \un\{\|Z\|  > A\}\given P_e Z] \un\{Y>y\}\Big)\bigg)\\
  &= c \EE[h_{1,A}(Z) \given Y>y] + \EE[h_{2,A}(Z) \given Y>y]. 
\end{align*}
Hence, in view of condition~\eqref{eq:unif_integrabilbity_2} for any $\epsilon>0$ there exists some $A>0$ such that $$\limsup_{y\to \supy} |C_2(A,y)| \le \epsilon,$$ whence
$\limsup_{y\to \supy} |C_2(A,y)| + |C_1(A,y)|\le \epsilon$, which shows~\eqref{eq:toshowth2-a} and completes the proof.

\section{Proofs and auxiliary results for Section~\ref{sec:estimationWithUniform}}\label{sec:technical_sec3}

  \subsection{Inverse of empirical \cdf's and order statistics}\label{ap:inverseEmpiricalCdf}
The following general fact is used on several occasions in our proofs: 
\begin{fact}\label{fact:cdf}
For $u\in (0,1], \hat H^-(u) = T_{( \lceil nu \rceil )}$ and for $z \in [0,n-1]$:
\[( \hat H(T_i) < (z+1)/n) \, \Leftrightarrow  (T_i  \leq  \hat H^-(z/n)).\]
\end{fact}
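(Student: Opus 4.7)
The plan is to split the statement into its two claims and verify each using elementary manipulations of the empirical distribution function, relying on the characterization~\eqref{eq:H_inverse} of left-continuous inverses and on the almost-sure distinctness of the $T_i$'s (which follows from the continuity of the underlying \cdf\ assumed earlier in Section~\ref{sec:notations_estim}).

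For the first claim, I would order the sample as $T_{(1)} < \cdots < T_{(n)}$ (a.s.). Since $\hat H$ is right-continuous, piecewise constant with jumps of size $1/n$ at each order statistic, one has $\hat H(T_{(j)}) = j/n$ and $\hat H(x) = (j-1)/n$ on $[T_{(j-1)}, T_{(j)})$ (with the convention $T_{(0)} = -\infty$). Consequently, for any $u \in ((j-1)/n, j/n]$ the sublevel set $\{x : \hat H(x) \ge u\}$ equals $[T_{(j)}, \infty)$, so $\hat H^-(u) = T_{(j)}$; noting that $\lceil nu \rceil = j$ on this interval yields $\hat H^-(u) = T_{(\lceil nu \rceil)}$.

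For the second claim, I would apply the contrapositive of~\eqref{eq:H_inverse}, namely $\hat H(x) < u \iff x < \hat H^-(u)$, with $x = T_i$ and $u = (z+1)/n$, obtaining $\hat H(T_i) < (z+1)/n \iff T_i < \hat H^-((z+1)/n)$. Since $\lceil z+1 \rceil = \lceil z \rceil + 1$ for any real $z \ge 0$, the first part gives $\hat H^-((z+1)/n) = T_{(\lceil z \rceil + 1)}$ whenever $z \in [0, n-1]$. By the almost-sure distinctness of the $T_i$'s, the strict inequality $T_i < T_{(\lceil z \rceil + 1)}$ is equivalent to the non-strict inequality $T_i \le T_{(\lceil z \rceil)}$, which in turn equals $\hat H^-(z/n)$ when $z \in (0, n-1]$.

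The only mild obstacle is the boundary case $z = 0$, which has to be treated separately since $\hat H^-(0)$ is the infimum of an unbounded-below set. Here one checks directly that both sides of the equivalence are empty: $\hat H(T_i) < 1/n$ forces $T_i < T_{(1)}$ which is impossible, and the condition $T_i \le \hat H^-(0) = -\infty$ is vacuously false. This completes the verification and the fact is proved; as expected for an essentially combinatorial statement about empirical quantiles, no step requires delicate analysis beyond careful bookkeeping of the direction of inequalities.
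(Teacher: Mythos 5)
Your proof is correct and follows essentially the same route as the paper's: identify $\hat H^-(u)$ with the order statistic $T_{(\lceil nu\rceil)}$ from the definition, then apply the negated form of~\eqref{eq:H_inverse} together with $\lceil z+1\rceil=\lceil z\rceil+1$ and the absence of ties to pass from $T_i<T_{(\lceil z\rceil+1)}$ to $T_i\le T_{(\lceil z\rceil)}$. You are somewhat more careful than the paper in making the no-ties assumption and the boundary case $z=0$ explicit, but this is elaboration rather than a different argument.
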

\begin{proof}
  The first statement follows from the definition of $\hat H^-$. Thus, using~\eqref{eq:H_inverse},
  \[
    \begin{aligned}
      \hat H(T_i) < (z+1)/n) & \iff T_i < \hat H^-((z+1)/n ) = T_{( \lceil z +1\rceil) } = T_{( \lceil z \rceil + 1)} \\
      &\iff T_i \le T_{( \lceil z \rceil )} =   \hat H^-(z /n).
    \end{aligned}
             \]
\end{proof}

\subsection{Vervaat's Lemma}\label{sec:vervaat}

We quote Lemma~4.3 in \cite{segers:15}, which is a variant of ``Vervaat's lemma'', i.e., the functional delta method for the mapping sending a monotone function to its inverse.

\begin{lemma}
\label{lem:Vervaat:random}
Let $G : \mathbb{R} \to [0, 1]$ be a continuous distribution function. Let $0 < r_n \to \infty$ and let $\hat{G}_n$ be a sequence of random distribution functions such that, in $\ell^\infty(\mathbb{R})$, we have $r_n ( \hat{G}_n - G) \leadsto \beta \circ G$, as $n \to \infty$, where $\beta$ is a random element of $\ell^\infty([0, 1])$ with continuous trajectories. Then $\beta(0) = \beta(1) = 0$ almost surely and as $n \to \infty$, 
\begin{equation*}
  \sup_{u \in [0, 1]} r_n |  (G \{ \hat{G}_n^-(u)\} - u ) +  ( \hat{G}_n \{ G^-(u)\} - u ) | = o_{{\mathbb P}}(1).
\end{equation*}

\end{lemma}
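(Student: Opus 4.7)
The plan is to reduce the statement to an almost-sure uniform convergence via Skorokhod's representation, and then exploit the pseudo-inverse identities $G(G^-(u))=u$ (valid since $G$ is continuous) and $\hat G_n^-(u)=\inf\{x:\hat G_n(x)\geq u\}$ together with the continuity of $G$ and of $\beta$. First I would verify the boundary condition $\beta(0)=\beta(1)=0$ a.s.: choosing a sequence $x_k\to -\infty$ for which $G(x_k)\downarrow 0$, the variable $\hat G_n(x_k)$ concentrates at $G(x_k)$ (since $r_n(\hat G_n-G)$ is stochastically bounded), so marginal weak convergence at each $x_k$ forces $\beta(G(x_k))\to 0$ in probability, and path-continuity of $\beta$ yields $\beta(0)=0$ a.s.; the argument at $+\infty$ is symmetric.

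Since $\beta\circ G$ takes values in a separable subset of $\ell^\infty(\rset)$ (its trajectories are continuous on $\rset$), the convergence $r_n(\hat G_n-G)\leadsto\beta\circ G$ admits a Skorokhod representation, so one may assume
\[
M_n := \sup_{x\in\rset}\big|r_n(\hat G_n(x)-G(x))-\beta(G(x))\big|\xrightarrow[n\to\infty]{\text{a.s.}} 0.
\]
Introduce $A_n(u):=r_n(G(\hat G_n^-(u))-u)$ and $B_n(u):=r_n(\hat G_n(G^-(u))-u)$. The term $B_n$ is the easy one: continuity of $G$ gives $G(G^-(u))=u$ for $u\in(0,1)$, hence $B_n(u)=\beta(u)+O(M_n)$ uniformly in $u$.

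For $A_n$, setting $y_n(u):=\hat G_n^-(u)$ I would split
\[
A_n(u)=-r_n\big(\hat G_n(y_n(u))-G(y_n(u))\big)+r_n\big(\hat G_n(y_n(u))-u\big).
\]
The first summand equals $-\beta(G(y_n(u)))+O(M_n)$ uniformly. The second summand is nonnegative by definition of $\hat G_n^-$ and right-continuity of $\hat G_n$, and bounded above by $r_n$ times the jump of $\hat G_n$ at $y_n(u)$. This jump rewrites, at any $y$, as
\[
\hat G_n(y)-\hat G_n(y-)=r_n^{-1}\big[r_n(\hat G_n(y)-G(y))-r_n(\hat G_n(y-)-G(y-))\big]
\]
because $G(y)=G(y-)$ by continuity of $G$. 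Under the Skorokhod reduction both bracketed terms approximate $\beta(G(y))=\beta(G(y-))$, so the bracket is $o(1)$ uniformly in $y$, which gives $r_n(\hat G_n(y_n(u))-u)=o(1)$ uniformly.

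Combining these two estimates yields $A_n(u)=-\beta(G(y_n(u)))+o(1)$ uniformly, and the same identity read in reverse gives $r_n(u-G(y_n(u)))=\beta(G(y_n(u)))+o(1)$, so $G(y_n(u))\to u$ uniformly at rate $r_n^{-1}$. Since $\beta$ has continuous paths on the compact interval $[0,1]$ it is uniformly continuous, hence $\beta(G(y_n(u)))=\beta(u)+o(1)$ uniformly, and finally $A_n(u)+B_n(u)=o(1)$ uniformly in $u\in[0,1]$. The main obstacle will be the uniform jump control of $\hat G_n$ at the random point $y_n(u)$, which hinges on the full uniform version of the weak convergence (over all $x$, including left-limits); the delicate point is therefore the justification of the Skorokhod representation in the non-separable space $\ell^\infty(\rset)$, which is feasible here because $\beta\circ G$ has almost-surely continuous paths and so lies in a separable subspace.
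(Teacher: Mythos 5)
The paper does not actually prove this lemma: it is quoted verbatim as Lemma~4.3 of Segers (2015), so there is no in-paper argument to compare against. Your proof is the classical Vervaat-type argument (almost-sure representation, then direct manipulation of the generalized inverse), and it is essentially correct and complete. The core steps all check out: the a.s.\ representation is legitimate because $\beta\circ G$ has continuous paths with limits $\beta(0)=\beta(1)=0$ at $\pm\infty$ and therefore lives in a separable closed subspace of $\ell^\infty(\rset)$; the identity $B_n(u)=\beta(u)+O(M_n)$ uses $G(G^-(u))=u$ correctly; the decomposition of $A_n$ is exact; the jump of $\hat G_n$ at any point is uniformly $O(M_n)$ because $G(y)=G(y^-)$ and the uniform bound passes to left limits by continuity of $G$ and $\beta$; and the final step via $\sup_u|u-G(\hat G_n^-(u))|=O(1/r_n)$ plus uniform continuity of $\beta$ on $[0,1]$ closes the argument. (Minor housekeeping you should add: the cases $u=0$ and $u=1$, where $G^-$ or $\hat G_n^-$ may be infinite, and the usual outer-probability caveat for the non-measurable supremum.)

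The one step whose mechanism does not work as written is the derivation of $\beta(0)=\beta(1)=0$. Marginal weak convergence of $r_n(\hat G_n(x_k)-G(x_k))$ to $\beta(G(x_k))$ together with stochastic boundedness does not ``force $\beta(G(x_k))\to 0$ in probability''; nothing in that marginal statement distinguishes the boundary points from interior ones. The standard repair is a portmanteau (or continuous mapping) argument: each prelimit $r_n(\hat G_n-G)$ lies in the closed subset $\{f\in\ell^\infty(\rset):\lim_{x\to-\infty}f(x)=\lim_{x\to+\infty}f(x)=0\}$, because $\hat G_n$ and $G$ are both distribution functions; hence the weak limit $\beta\circ G$ lies in this set almost surely, and continuity of $\beta$ together with $G(x)\to 0$ (resp.\ $1$) gives $\beta(0)=\beta(1)=0$. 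Equivalently, apply the continuous mapping theorem to the $1$-Lipschitz functional $f\mapsto\inf_{x_0}\sup_{x\le x_0}|f(x)|$, which vanishes on every prelimit. With that substitution your proof is sound.
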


 \subsection{Proof of Lemma~\ref{lemma:useful_lemma_generic}}\label{sec:proof_genericLemma_1}
Because we only need to show that for any $u\in \mathbb R^p$, $v^T \widetilde \Gamma  \leadsto v ^T \widetilde  W $, to prove that $\widetilde \Gamma$ is asymptotically tight  we may  consider the case where $q=1$, i.e., $h(V)\in \mathbb R$. 
Denoting by $\psi$ the derivative of $x\mapsto \EE[h(V)^2 \un{\{U\leq x\}}]$, there exist by assumption positive constants $(c_0,\delta_0)$ such that for all $\delta\leq \delta_0$, $\psi(\delta) \leq c_0 $. Similarly, because we assume  the existence of 
$\Xi = S(0)$ (Assumption 2. in Theorem~\ref{thm:generic-pair-tailprocess}), there exist positive constants $(c_1,\delta_1)$ such that for all $\delta\leq \delta_1$, $\mathbb E [h(V)^2 \,|\, U<\delta_1 ]\leq c_1 $. 
We assume in the following argument  that $k/n \leq \delta_0 \wedge \delta_1$.

We  apply Theorem 2.11.23 in \cite{vandervaart+w:1996} (Classes of functions changing with $n$) with 
\begin{align*}
&f_{n,u}(V,U) = \sqrt{\frac{n}{k}} h(V) \un{\{U\leq uk/n\}},\\
& \mathcal F_n = \{f_{n,u} \,:\, u\in[0,1]\},\\
& F_n(V,U) = \sqrt{\frac{n}{k}} | h(V) | \un{\{U\leq k/n\}}.
\end{align*}
We start by verifying equation 2.11.21 in \cite{vandervaart+w:1996}. First, we have $$\EE[ F_{n}(V,U)^2] \leq  c_1.$$ Second, for any $\eta>0 $ and $M>0$, it holds that (for $n,k$ large enough)
\begin{align*}
  &\EE[ F_{n}(V,U)^2 \un{\{ F_n(V,U) >\eta \sqrt n \}} ] \\
  & = \left(\frac n k \right)   \EE[ |h(V)|^2  \un{\{U\leq k/n\}}    \un{\{ |h(V)| \un{\{U\leq k/n\}} >\eta \sqrt k \}} ]\\
 &\leq \left(\frac n k \right)   \EE[ |h(V)|^2 \un{\{U\leq k/n\}}  \un{\{ |h(V)|  >\eta \sqrt k \}} ]\\
 &\leq \left(\frac n k \right)   \EE[ |h(V)|^2 \un{\{U\leq k/n\}}  \un{\{ |h(V)|  >M \}} ] .
\end{align*} 
Hence 
\begin{align*}
\limsup_{n\to \infty} \EE[ F_{n}(V,U)^2 \un{\{ F_n(U) >\eta \sqrt n \}} ]\leq  S(M).
\end{align*}
But $M$ is arbitrary so the latter display is arbitrarily small. Third,  by the mean value theorem, whenever $u\leq t$, $\exists \tilde t\in (u,t)$ such that 
\begin{align*}
\EE[ (f_{n,u} (V,U) - f_{n,t}(V,U))^2 ] &= \left({\frac{n}{k}}\right) \EE[ h(V)^2 \un{\{  uk/n \leq U \leq tk/n \}} ] \\
&= \psi(\tilde t k/n) ( t-u)\\
& \leq c_0 (t-u).
\end{align*} 
This implies that
\begin{align*}
\sup_{|u-t|\leq \delta_n }  \EE[ (f_{n,u} (V,U) - f_{n,t}(V,U))^2 ] \to 0 , \text{ as } \delta_n \to 0.
\end{align*}
It remains to check the entropy condition for the class $\mathcal F_n$. Let $0<\epsilon< 1$, and denote by $u_i = i \epsilon$, $i = 0, \ldots , N$ and $u_{N+1} = 1$ with $N = \lfloor 1/\epsilon\rfloor $. Denote respectively by $f_{n,u}^+$ and $f_{n,u}^-$ the positive and negative parts of $f_{n,u}$ and by $\mathcal{F}_n^+, \mathcal{F}_n^-$ the associated classes. The functions $(f_{n,u_i }^+)$ (resp. $(f_{n,u_i }^-)$)  forms an $(\epsilon,L_2) $-bracketing of $\mathcal F_n^+ $ (resp. $\mathcal F_n^-$), i.e., for any $u\in [0,1]$, there exists $i$ such that
\begin{align*}
f_{n,u_i} ^+\leq f_{n,u}^+ \leq f_{n,u_{i+1}}^+ , 
\end{align*}
and 
\begin{align*}
\EE[ (f_{n,u_{i+1}}^+ (V, U)  - f_{n,u_i}^+(V,U) )^2 ]\leq c_0 \epsilon. 
\end{align*}
Similar inequalities remain valid for $\mathcal F_n^-$.
Hence  considering the functions $f_{n,i} = f_{n,u_i}^+  - f_{n, u_{i+1} }^- $, we have that for $u \in [u_i,u_{i+1}]$, $i = 0,\ldots, N$, 
  \[
f_{n, u}(\point) = f_{n,u}^+(\point) - f_{n,u}^-(\point) \in [f_{n,i}(\point) , f_{n, i+1}(\point) ]\,,
  \]
 thus  there exists $ C>0$ such that
\begin{align*}
\mathcal N_{[\,]} (\epsilon\|F_n\|_{L_2(P)} , \mathcal F_n , L_2(P)) \leq  C/\epsilon^2. 
\end{align*}
The entropy condition is satisfied as for all $\delta_n\to 0$,
\begin{align}
 \int_0^{\delta_n} \sqrt{ \log \mathcal N_{[\,]} (\epsilon\|F_n\|_{L_2(P)} , \mathcal F_n , L_2(P)) } \,d \epsilon  \to  0.\label{eq:entropyCondit}
\end{align}
Consequently, the process $\widetilde \Gamma$ is tight.
Finally the covariance functions at $s\le t$ are given by
\[
  \begin{aligned}
    \Cov{\widetilde \Gamma_h(s), \widetilde\Gamma_h(t)} & =
    \EE[n/k h(V)h(V)^\top \un\{U \le s k/n \}] - \dotsb \\
    &\qquad \qquad n/k \EE[h(V)\un\{U \le s k/n \}]\EE[h(V)\un\{U \le t k/n \}]\\
    & = s\EE[ h(V)h(V)^\top\given U\le s k/n ]  - \dotsb \\
   & \qquad \qquad k/n \; st \EE[h(V) \given U\le sk/n ]  \EE[h(V) \given U\le tk/n ]
  \end{aligned}
  \]
The first term in the right-hand side converges to $s\; \Xi  = (s\wedge t )\; \Xi$ while the second term goes to zero from assumption 3. in Theorem~\ref{thm:generic-pair-tailprocess}'s statement. This concludes the proof. 
 




\subsection{Proof of Lemma \ref{lemma:useful_lemma_2}}\label{sec:proofUsefulLemma2}

We apply  Lemma~\ref{lem:Vervaat:random} (Vervaat) to the distribution functions 
\begin{align*}
\hat G_n  (u) = \left\{ \begin{array}{ll}
0 &\text{ for }  u<0  \\
{\hat F_U(u  k/n )} / {\hat F_U(k/n)}& \text{ for }  0\leq u  \leq  1  \\
1 & \text{ for }  1<u
\end{array}\right.  , \qquad G(u) = \left\{ \begin{array}{ll}
0 &\text{ for }  u<0  \\
u & \text{ for }  0\leq u  \leq  1  \\
1 & \text{ for }  1<u
\end{array}\right.. 
\end{align*}
The quantile functions of $\hat G_n$ and $G$ are respectively, for any $u\in [0,1]$,
\begin{align*}
\hat G_n ^-  (u) = \frac{\hat F_U^-(u \hat F_U( k/n))}{ k/n } ,\qquad G^-(u) = u, 
\end{align*}
 Now we prove that the conditions of  Lemma \ref{lem:Vervaat:random} are satisfied with $r_n = \sqrt k $ and $\beta$ a Brownian bridge with covariance function $u_1\wedge u_2 - u_1u_2$.
 Define
\begin{align*}
  &  a_n = \frac{ k/n }{ \hat F_U( k/n)}  
\end{align*}
and write
\begin{align*}
 \sqrt k (\hat G_n  (u)  - u) &= \left(\frac{ \sqrt k}{\hat F_U(k/n)}\right) \left(\hat F_U(uk/n)   - u \hat F_U(k/n) \right)\\
&=a_n  \sqrt k  \left( \frac{n}{k} \hat F_U(uk/n)   - u\frac{n}{k}  \hat F_U(k/n) \right)\\
&=a_n  \sqrt k \left( (\frac{n}{k} \hat F_U(uk/n) - u ) -  u ( \frac{n}{k}  \hat F_U(k/n) - 1) \right)\\
&=a_n \left( \hat \gamma_1(u) -  u \hat \gamma_1(1) \right)\\
& = a_n \hat \gamma_2(u), 
\end{align*}
where $\hat\gamma_1$ is defined in~\eqref{eq:gamma1} and 
\begin{align}
  \label{eq:gamma2}
  \hat \gamma_2(u) = \hat \gamma_1(u) -  u \hat \gamma_1(1). 
\end{align}
Now use that $a_n\to 1$ in probability 
and that $\sup_{u\in[0,1] } |\hat \gamma_2(u)|= O_{\mathbb P} (1)$ (both are consequences of Corollary \ref{lemma:useful_cor_1}) to conclude (invoking Slutsky's lemma)  that
\begin{align}
\sqrt k (\hat G_n  (u)  - u)  &= \hat \gamma_2(u) + (a_n - 1 ) \hat \gamma_2(u) \nonumber  \\
& = \hat \gamma_2(u)  + o_{{\mathbb P}}(1),\label{eq:Gnu}
\end{align}
 where the stochastic convergence  $o_{{\mathbb P}}(1)$ is uniform in $ u\in [0,1]$.  In particular that $\sqrt k (\hat G_n  (u)  - u) $ weakly converges to a Brownian bridge with covariance function $u_1\wedge u_2 - u_1u_2$. The conclusion of Lemma \ref{lem:Vervaat:random} is that
\begin{align*}
\sup_{u\in (0,1] }  \left|  \hat \gamma_3 (u)  + 
  \sqrt k  (\hat G_n  (u)  - u) \right|  = o_{{\mathbb P}}(1),
\end{align*}
with 
\begin{equation}
  \label{eq:gamma3}
  \hat \gamma_3 (u)  =  \sqrt{k} ( \hat G_n^-(u) - G^-(u))  =\sqrt k \left( \frac{n}{k}\hat F_U^-(u \hat F_U( k/n))  - u \right) . 
\end{equation}
Consequently, using~\eqref{eq:Gnu}, 
\begin{align}\label{eq:gam1_2}
\sup_{u\in (0,1] }   \left|  \hat \gamma_3 (u)   + \hat \gamma_2(u) \right|  = o_{{\mathbb P}}(1).
\end{align}
Remark that 
\begin{align}\label{eq:useful}
\sqrt k \left( (n/k)\hat F_U^-(u k/n )  - u \right)   =  \hat \gamma_3 (u  a_n  ) + u \sqrt k (a_n - 1).
\end{align}
and that, as $\hat \gamma_1(1) = \sqrt k  ( (n/k) \hat F_U(k/n) - 1)$,
\begin{align}
\sqrt k (a_n - 1) & =- a_n \hat\gamma_1(1) \label{eq:an}
\end{align}
Using the triangle inequality and (\ref{eq:useful}), we get
\begin{align*}
&\left|\sqrt k   \left( (n/k)\hat F_U^-(u k/n )  - u \right) + \hat \gamma_1(u)   \right|  \\
&=  \left| \hat \gamma_3(u  a_n  )  +u \sqrt k (a_n - 1) +   \hat \gamma_1(u)\right|  \\
&\leq \left| \hat \gamma_3(u  a_n  )  + \hat \gamma_2(ua_n)  \right| +  \left| u \sqrt k (a_n - 1) +   \hat \gamma_1(u) -\hat \gamma_2(ua_n)   \right| \\
&=   \left| \hat \gamma_3(u  a_n  )  + \hat \gamma_2(ua_n)  \right|  + | \hat \gamma_1(u   ) - \hat \gamma_1(u  a_n  ) | ,
\end{align*}
where the last line is deduced from (\ref{eq:an}) and $\hat \gamma_2(u) = \hat \gamma_1 (u)- u \hat \gamma_1(1)$. Whenever $u\in [0,1/2]$, we have, with probability going to $1$, that $u  a_n \in [0,1]$. 
Moreover, because $a_n\to 0$ in probability, there exists $\delta_n\to 0$ such that the event $ |u - u a_n |\leq |a_n| \leq \delta_n$ has probability going to $1$. 
On these events, it holds
\begin{align*}
&\sup_{u\in (0,1/2]} \left| \hat \gamma_3(u  a_n  )  + \hat \gamma_2(ua_n)  \right|  \leq  \sup_{u\in (0,1]} \left| \hat \gamma_3(u   )  + \hat \gamma_2(u)  \right| =   o_{\mathbb P}(1)\\
&\sup_{u\in (0,1/2]}   | \hat \gamma_1(u   ) - \hat \gamma_1(u  a_n  ) |  = \sup_{u\in (0,1],v\in(0,1], |u-v|\leq \delta_n}   | \hat \gamma_1(u   ) - \hat \gamma_1(v  ) |  =  o_{\mathbb P}(1).
\end{align*}
We have used (\ref{eq:gam1_2})  and the asymptotic equicontinuity of $\hat \gamma_1$. Consequently we have shown that, whenever $n\to \infty $, $k\to \infty$, we have
\begin{align*}
\sup_{u\in (0,1/2]}&  \left|\sqrt k \left( (n/k)\hat F_U^-(u k/n )  - u \right) + \hat \gamma_1(u)   \right|  =o_{\mathbb P}(1) .
\end{align*}
To obtain the stated result, apply this  with $2k$ in place of $k$.



\section{Extension to non-standardized covariates}\label{sec:nonstandard}
In this section we extend our inverse regression framework to the case
of non-standardized covariates $X$.
Section~\ref{sec:background-nonstandard} recalls standard results for
that matter. In Section~\ref{sec:nonstandard-sirex-savex} the
extensions of the TIREX1 and TIREX2 principles are presented. The
proofs of these results are omitted since they follow from classical
arguments from non-standardized covariates combined with our proofs
with standardized covariates from Section~\ref{sec:tail-CI-central space}. In 
Section~\ref{sec:nonstandard-estimation} we show that estimating the
mean vector and covariance matrix for standardization does not change
the asymptotic behavior of the latter tail processes.

\subsection{SIR and SAVE principles  with non-standardized covariates}\label{sec:background-nonstandard}
We first recall some necessary background from the theory of inverse regression with non-standardized covariates, as exposed \emph{e.g.} in \cite{cook+w:1991}.

\subsubsection{SDR spaces }Recall from Section~\ref{sec:background-SIR} that in terms of non-standardized covariates $X = m + \Sigma^{1/2}Z$, a subspace $\tilde E$ of $\rset^p$   is a SDR space for the pair $(X,Y)$ if and only if $\tilde E = \Sigma^{-1/2} E$ where  $E$ is a SDR space for the pair $(Z,Y)$. 
We denote in the sequel by $\tilde P$ the  orthogonal projector onto such a  SDR space $\tilde E$ and we define  $\tilde Q = I_p - \tilde P$. 


\subsubsection{Linearity and constant variance conditions }
Conditions LC~\eqref{eq:LC} and CCV~\eqref{eq:CCV} regarding the standardized variable $Z$ are respectively equivalent to
  \begin{equation}
    \label{eq:LC-X}
    \EE [X| \tilde P X]  = b + B \tilde P X 
  \end{equation}
for some $b\in \mathbb R^p$ and $B\in \mathbb R^{p\times p}$,  and 
  \begin{equation}
    \label{eq:CCV-X}
\Var{X|\tilde P X }    \text{ is constant} \quad \text{a.s.}
  \end{equation}

\subsubsection{ SIR principle and CUME matrix }
  The extension of the SIR principle (Proposition~\ref{prop:sir_principle}) in terms of non-standardized covariates, is that under condition~\eqref{eq:LC-X}, it holds that
  \begin{equation}
    \label{eq:SIRprinciple-nonstd}
  \Sigma^{-1} (\EE[X | Y] - m)   \in \tilde E.
  \end{equation}
As a consequence  the  CUME matrix defined in~\eqref{eq:M-cume} must be replaced with 
the matrix  $\tilde M _{\text{CUME}}  = \EE [\tilde m(Y) \tilde m(Y) ^T] $, with
\[\tilde m(y) = \EE[ (X -m) \un\{ Y\le y\}], \]
in which case it holds that
\[\Span(\tilde M_{\text{CUME}} ) \subset \Sigma \tilde E = \Sigma^{1/2} E. \]

\subsubsection{SAVE principle }
The parallel statement of Proposition~\ref{prop:SAVE} is that under conditions~\eqref{eq:LC-X} and \eqref{eq:CCV-X}, we have
\begin{equation}
  \label{eq:SAVE-princple-nonstd}
  \Span(\Sigma ^{-1} ( \Var{X  \given Y} -\Sigma ) )\subset \tilde E  \quad a.s., 
\end{equation}
   or equivalently $\Span(\Sigma ^{-1} ( \EE\big[ (X- m ) (X - m )^\top  \,|\, Y \big] -\Sigma ) )\subset \tilde E $. 


\subsection{TIREX principles with non-standardized covariates}\label{sec:nonstandard-sirex-savex}
It follows from Definition~\ref{as:tailCI} that $E_e$ is an extreme SDR space for the pair $(Z,Y)$ if and only if $\tilde E_e = \Sigma^{-1/2} E_e$ is an extreme SDR space for the pair $(X,Y)$, in the sense that, denoting by $\tilde P_e$ the orthogonal projection on $\tilde E_e$, $Y_\infty\indep \, X \,|\, \tilde P_e X$.  
 
We now state the analogue statement to Theorem~\ref{th:SIR_extreme} in  terms of the non-standardized covariate $X$.
\begin{proposition}[non-standardized TIREX1 principle]\label{prop:nonstandard-SIREX}
The assumptions of Theorem~\ref{th:SIR_extreme} are equivalent to
  \begin{enumerate}
  \item $\lim_{A\to \infty }\limsup_{y\to y^+} \EE[ \tilde g_{k,A}(X) \given Y>y ]  = 0 $, $k = 1,2 $
    where $\tilde g_{1,A}(X) =  \|X\| \un\{\|X\| >A\}  $ and $\tilde g_{2,A}(X) = \EE[ \|X\| \un\{\|X\| >A\} \given \tilde P_eX]$, where $\tilde P_e$ is the orthogonal projector on $\tilde E_e = \Sigma^{-1/2} E_e$.

  \item The covariate vector satisfies the non-standardized linearity condition~\eqref{eq:LC-X}
    \item For some $\tilde \ell\in\rset^p $,  with $m = \EE[X]$
  \begin{equation}
    \label{eq:limitEZ-largeY}
      \EE[X \given  Y>y ]  - m \xrightarrow[y\to y^+]{}\tilde \ell. 
    \end{equation}
  \end{enumerate}
  In such a case $\tilde \ell = \Sigma^{1/2} \ell$ where $\ell$ is the limit defined in Theorem~\ref{th:SIR_extreme}  and the conclusion   is that
  \[\Sigma^{-1} \tilde \ell \in \tilde E_e. \] 
\end{proposition}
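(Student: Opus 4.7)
The plan is to reduce the statement to Theorem~\ref{th:SIR_extreme} applied to the standardized pair $(Z,Y)$, by systematically translating each of the three assumptions on $X$ into its standardized counterpart via the identity $X = m + \Sigma^{1/2} Z$ and the relation $\tilde E_e = \Sigma^{-1/2} E_e$.

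First, I would verify the equivalence of the three pairs of assumptions. For the uniform integrability condition, I would exploit the two-sided bounds $\|X\| \le \|m\| + \|\Sigma^{1/2}\|_{\mathrm{op}}\|Z\|$ and $\|Z\| \le \|\Sigma^{-1/2}\|_{\mathrm{op}}(\|X\|+\|m\|)$, so that the tail events $\{\|Z\|>A\}$ and $\{\|X\|>A'\}$ are comparable up to an affine change of threshold. Taking $\limsup_{y\to\supy}$ followed by $A\to\infty$ then shows that \eqref{eq:unif_integrabilbity} and its $X$-counterpart from the statement are equivalent; the equivalence for $k=2$ uses in addition that $\tilde P_e X = \tilde P_e m + \tilde P_e \Sigma^{1/2} Z$ and that conditional expectation commutes with the deterministic linear map $\Sigma^{1/2}$. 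For the linearity condition, I would simply invoke the well-known equivalence between \eqref{eq:LC} and \eqref{eq:LC-X} recalled at the start of Section~\ref{sec:background-nonstandard}. Finally, for the convergence of conditional expectations, linearity of the conditional expectation gives
\[
\EE[X \given Y>y] - m = \Sigma^{1/2} \EE[Z \given Y>y],
\]
so the two limits exist simultaneously and are tied together by $\tilde\ell = \Sigma^{1/2}\ell$.

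Once the equivalence is in hand, Theorem~\ref{th:SIR_extreme} applies to the standardized pair $(Z,Y)$ and yields $\ell \in E_e$. The conclusion then follows from the trivial computation
\[
\Sigma^{-1}\tilde\ell \;=\; \Sigma^{-1}\Sigma^{1/2}\ell \;=\; \Sigma^{-1/2}\ell \;\in\; \Sigma^{-1/2} E_e \;=\; \tilde E_e,
\]
where the last identity is the very definition of $\tilde E_e$.

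I do not anticipate any serious obstacle. The only mildly delicate step is the comparison of the two uniform integrability conditions after the affine change of variables, but this is routine under the standing assumption that $\Sigma$ is invertible, since the operator norms $\|\Sigma^{\pm 1/2}\|_{\mathrm{op}}$ are then finite; the remaining steps are essentially bookkeeping, which is why the authors likely omit the proof as announced in Section~\ref{sec:nonstandard-sirex-savex}.
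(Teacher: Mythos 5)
Your proposal is correct and is precisely the reduction the paper has in mind: the authors omit the proof of Proposition~\ref{prop:nonstandard-SIREX} on the grounds that it follows from the standardized case (Theorem~\ref{th:SIR_extreme}) combined with the classical affine change of variables $X = m + \Sigma^{1/2}Z$, which is exactly your argument. The one detail worth making explicit in the $k=2$ uniform integrability step is that conditioning on $\tilde P_e X$ is the same as conditioning on $P_e Z$; writing $\tilde P_e X = \tilde P_e m + \tilde P_e\Sigma^{1/2}Z$ is not enough by itself, and one should note that $\tilde E_e^\perp = \Sigma^{1/2}(E_e^\perp)$ (since $\tilde E_e = \Sigma^{-1/2}E_e$ with $\Sigma^{1/2}$ symmetric and invertible) gives $\tilde P_e\Sigma^{1/2} = \tilde P_e\Sigma^{1/2}P_e$ and $P_e\Sigma^{-1/2} = P_e\Sigma^{-1/2}\tilde P_e$, hence $\sigma(\tilde P_e X) = \sigma(P_e Z)$.
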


\begin{proposition}[non-standardized TIREX2 principle]\label{prop:nonstd-SAVEX}
  Assume that $(X,Y)$ and the extreme SDR space satisfy the assumptions of Proposition~\ref{prop:nonstandard-SIREX} (non-standardized TIREX1 principle) and that in addition,
  \begin{enumerate}
  \item \ (second order uniform integrability):
\begin{align}
&\lim_{A\to \infty }\limsup_{y\to y^+} \EE[ \tilde h_{k,A}(X) \given Y>y ]  = 0 ,\qquad k = 1,2 \;,\label{eq:unif_integrabilbity_2_nonstandard}
\end{align}
where  
 $\tilde h_{1,A} (X) =  \|X\|^2 \un\{\|X\| >A\}  $ and $\tilde h_{2,A}(X) = \EE[ \|X\|^2 \un\{\|X\| >A\} \given \tilde P_eX]$, 

\item \ (CCV) The covariate vector $X$ satisfies the  non-standardized constant variance condition~\eqref{eq:CCV-X}  relative to~$\tilde P_e$, 
\item \ (Convergence of conditional expectations)  For some $\tilde S\in\rset^{p\times p} $,
  \begin{equation}
    \label{eq:limitEZ2-largeY}
      \EE[XX^\top \given  Y>y ]\xrightarrow[y\to \supy]{} \tilde S + \tilde \ell \tilde \ell^\top, 
    \end{equation}
where $\tilde \ell$ is the limit appearing in Proposition~\ref{prop:nonstandard-SIREX}. 
  \end{enumerate}
  Then  $$\Span( \Sigma^{-1} (\tilde S - \Sigma)) \subset \tilde E_e, $$
  \ie  $\tilde Q_e \Sigma^{-1} (\tilde S  - \Sigma) =0.$ 
\end{proposition}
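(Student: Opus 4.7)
My plan is to reduce Proposition~\ref{prop:nonstd-SAVEX} to the already-established standardized version, Theorem~\ref{th:SAVEx}, via the change of variable $Z = \Sigma^{-1/2}(X - m)$, which places the hypotheses and the conclusions of the two statements in one-to-one correspondence. Recall from Section~\ref{sec:background-nonstandard} that $\tilde E_e = \Sigma^{-1/2} E_e$, so that showing $\Sigma^{-1}(\tilde S - \Sigma)$ has columns in $\tilde E_e$ amounts to showing that $\Sigma^{1/2}\Sigma^{-1}(\tilde S - \Sigma) = \Sigma^{-1/2}(\tilde S - \Sigma)$ has columns in $E_e$, at which point the standardized result can be directly invoked.

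The first step is to verify that each hypothesis transfers. The second-order uniform integrability for $\tilde h_{1,A}, \tilde h_{2,A}$ in the non-standardized formulation is equivalent to the analogous condition for $h_{1,A}, h_{2,A}$ on $Z$, because $\|X\|$ and $\|Z\|$ are comparable up to additive and multiplicative constants determined by $m$ and $\Sigma^{1/2}$. The non-standardized CCV~\eqref{eq:CCV-X} is equivalent to the standardized CCV~\eqref{eq:CCV} relative to $P_e$ (and similarly LC, which is already used by Proposition~\ref{prop:nonstandard-SIREX}). Finally, the assumed convergence of $\EE[XX^\top|Y>y]$, combined with Proposition~\ref{prop:nonstandard-SIREX}, yields the convergence of $\EE[ZZ^\top|Y>y]$ to a limit of the form $S + \ell\ell^\top$ with $\ell = \Sigma^{-1/2}\tilde \ell$, by expanding $X = m + \Sigma^{1/2}Z$ and extracting the $ZZ^\top$ term.

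Applying Theorem~\ref{th:SAVEx} then produces $Q_e(S - I_p) = 0$, i.e., the columns of $S - I_p$ lie in $E_e$. To translate back, I would expand
\[
\EE[XX^\top|Y>y] = mm^\top + m\EE[Z|Y>y]^\top\Sigma^{1/2} + \Sigma^{1/2}\EE[Z|Y>y]m^\top + \Sigma^{1/2}\EE[ZZ^\top|Y>y]\Sigma^{1/2},
\]
pass to the limit, and match with $\tilde S + \tilde\ell\tilde\ell^\top$. After collecting, an identity of the form $\tilde S - \Sigma = \Sigma^{1/2}(S - I_p)\Sigma^{1/2} + R$ emerges, where $R$ gathers cross-terms built from $m$ and $\tilde\ell$. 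Left-multiplication by $\Sigma^{-1}$ and then by $\tilde Q_e$ kills the leading piece $\Sigma^{-1/2}(S - I_p)\Sigma^{1/2}$, since $\Sigma^{-1/2}$ maps $E_e$ into $\tilde E_e$ and right-multiplication by $\Sigma^{1/2}$ preserves the column-space inclusion; the residual terms are absorbed using the TIREX1 conclusion $\Sigma^{-1}\tilde \ell \in \tilde E_e$ from Proposition~\ref{prop:nonstandard-SIREX}.

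The main obstacle I anticipate is precisely this last accounting of the $m$-dependent cross terms: unlike the classical SAVE principle~\eqref{eq:SAVE-princple-nonstd}, which is expressed through the translation-invariant quantity $\Var{X|Y}$, the tail-version uses $\EE[XX^\top|Y>y]$, so contributions from the mean $m$ do not automatically cancel and must be tracked through the linear change of variables. Once this algebraic bookkeeping is resolved, the probabilistic content of the argument is carried entirely by Theorem~\ref{th:SAVEx} and no new tail-level estimate is needed.
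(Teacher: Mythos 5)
Your reduction to Theorem~\ref{th:SAVEx} via $Z=\Sigma^{-1/2}(X-m)$ is exactly the route the paper intends: the supplementary material gives no proof of Proposition~\ref{prop:nonstd-SAVEX}, stating only that it follows from the classical change-of-variables arguments combined with the standardized result. Your verification that the hypotheses transfer (comparability of the uniform integrability conditions, equivalence of LC/CCV in the two parametrizations, and the identifications $\tilde\ell=\Sigma^{1/2}\ell$ and $Q_e(S-I_p)=0$) is sound.

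However, the final step as you describe it does not close. Expanding $X=m+\Sigma^{1/2}Z$ in condition 3 as literally stated yields
\[
\tilde S-\Sigma=\Sigma^{1/2}(S-I_p)\Sigma^{1/2}+mm^\top+m\tilde\ell^\top+\tilde\ell m^\top .
\]
After left multiplication by $\tilde Q_e\Sigma^{-1}$, the leading term vanishes (the columns of $S-I_p$ lie in $E_e$, hence those of $\Sigma^{-1/2}(S-I_p)\Sigma^{1/2}$ lie in $\Sigma^{-1/2}E_e=\tilde E_e$), and $\tilde Q_e\Sigma^{-1}\tilde\ell m^\top=0$ by Proposition~\ref{prop:nonstandard-SIREX}; but the remaining piece $\tilde Q_e\Sigma^{-1}m\,(m+\tilde\ell)^\top$ is \emph{not} absorbed by the TIREX1 conclusion, since nothing in the hypotheses forces $\Sigma^{-1}m\in\tilde E_e$. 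So the claimed absorption of the residual fails, and with the uncentered moment $\EE[XX^\top\given Y>y]$ the conclusion is false in general when $m\neq 0$. The resolution is that condition 3 must be read with the centered second moment, $\EE[(X-m)(X-m)^\top\given Y>y]\to\tilde S+\tilde\ell\tilde\ell^\top$, which is the convention used everywhere else in the paper: in the classical non-standardized SAVE principle~\eqref{eq:SAVE-princple-nonstd} and in the estimator $B_n^{m,\Sigma}$ of~\eqref{eq:Bndeterministic-nstd}, which satisfies $B_n^{m,\Sigma}=\Sigma^{1/2}B_n\Sigma^{1/2}$. Under that reading $\tilde S=\Sigma^{1/2}S\Sigma^{1/2}$ exactly, no cross terms appear, and your reduction goes through in one line.
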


\subsection{Estimation with non-standardized covariates}\label{sec:nonstandard-estimation}
Consider the non-standardized versions of the matrices $M_{\text{TIREX1}},M_{\text{TIREX2}}$ from Section~\ref{sec:estimationWithUniform} defined as follows:
  \begin{equation}
    \label{eq:Cndeterministic-nstd}
    \begin{gathered}
    \tilde{ M}_{\text{TIREX1}} = \int_0^1C_n^{m}(u)C_n^{m}(u)^\top  \ud u \,,\text{  with  }\\
    C_n^m(u) = \frac{n}{k}\EE[(X-m)\un\{ \tilde Y < F^-(uk/n)\}] \;,           
    \end{gathered}
  \end{equation}
and 
  \begin{equation}
    \label{eq:Bndeterministic-nstd}
    \begin{gathered}
      \tilde{ M}_{\text{TIREX2}} = \int_0^1 B_n^{m,\Sigma}(u)B_n^{m,\Sigma}(u)^\top  \ud u \,,\text{ with }  \\
      B_n^{m,\Sigma}(u) = \frac{n}{k}\EE[\big( (X-m)(X-m)^\top - \Sigma \big)\un\{ \tilde Y < F^-( uk/n)\} ] .        
    \end{gathered}
  \end{equation}
  In view of Propositions~\ref{prop:nonstandard-SIREX} and~\ref{prop:nonstd-SAVEX}, under the same assumptions therein, $\Span(\tilde M_{\text{TIREX1}})$ and $\Span(\tilde M_{\text{TIREX2}})$ become close to $\Sigma \tilde E_e$ as $n\to\infty$, in the sense that
  \[\lim_{n\to \infty }  \tilde Q_e\Sigma^{-1} \tilde M_{\text{TIREX1}}
    = \lim_{n\to \infty }\tilde Q_e \Sigma^{-1} \tilde M_{\text{TIREX2}} = 0, \]
  where $\tilde Q_e$ is the orthogonal projector on $\tilde E_e^\perp$. 

  Notice that we can write $C_{n}^{m},B_n^{m,\Sigma}$  in terms of $C_n, B_n$ as follows:
  \begin{equation}
    \label{eq:linkCnBn-nonstandard}
    \begin{aligned}
      C_n^{m}(u) & = \Sigma^{1/2}C_n(u) \\
      B_n^{m,\Sigma}(u) &= \Sigma^{1/2}B_n(u) \Sigma^{1/2}
    \end{aligned}
  \end{equation}
  Despite the apparent simplicity of~\eqref{eq:linkCnBn-nonstandard}, in the estimation step  with unknown covariate's mean and covariance, one must replace $m$ and $\Sigma$ in definitions~\eqref{eq:Cndeterministic-nstd} and~\eqref{eq:Bndeterministic-nstd} with some estimates, \emph{e.g.} the  empirical ones which we denote by $\hat m, \hat\Sigma$. Namely we consider the processes
  \begin{equation}
    \label{eq:empiricalCnBn-nonstd}
    \begin{aligned}
      \hat C_n^{\hat m}(u)
      & =\frac{1}{k}\sum_{i=1}^n(X_i- \hat m)\un\{ \tilde Y_i \le \hat F^-(uk/n)\} \;,\\ 
      \hat B_n^{\hat m,\hat \Sigma}(u)
      &= \frac{1}{k}\sum_{i=1}^n \Big( (X_i-\hat m)(X_i-\hat m)^\top - \hat \Sigma\Big)
      \un\{ \tilde Y_i \le  \hat F^-( uk/n)\} ]
    \end{aligned}
  \end{equation}
and define the non-standardized TIREX1 and TIREX2 tail empirical processes respectively as 
\begin{align}
  \sqrt{k} \Big(\hat C_n^{\hat m} - C_n^{m} \Big)
\text{ and  }
\sqrt{k} \Big(\hat B_n^{\hat m,\hat \Sigma} - B_n^{m, \Sigma} \Big) . 
    \label{nonstand-sirexSavexProcesses}
\end{align}

  We assume that the conditions for the central limit theorem regarding the estimators $\hat m$ and $\hat \Sigma$ are met. For instance, we assume that $X$ admits fourth order moments, an assumption which is needed anyway for the weak convergence of the TIREX2 process, see Corollary~\ref{cor:weakCV-SIREX-SAVEX}. Thus we work under the assumption that
  \begin{equation}
    \label{eq:cvMeanVarianceRootN}
    \hat m = m + O_{\PP}(1/\sqrt{n}) \;;\quad \hat \Sigma = \Sigma + O_{\PP}(1/\sqrt{n} ). 
  \end{equation}

  \begin{proposition}[Weak convergence of non-standardized TIREX processes]\label{prop:weakCV-nonstd-sirexSavex}
    Under Assumption~\eqref{eq:cvMeanVarianceRootN},
    \begin{enumerate}
    \item The standardized TIREX1 process $\sqrt{k}(\hat C_n - C_n)$ converges weakly  in $\ell^\infty([0,1])$  to a tight Gaussian process $W_{1}$ if and only if its non-standardized version defined in~\eqref{nonstand-sirexSavexProcesses} converges weakly, in the same space,  to the Gaussian process $\Sigma^{1/2}W_1$.
      \item If weak convergence of the TIREX1 process holds true, then  the standardized TIREX2 process 
       $\sqrt{k}(\hat B_n - B_n)$  converges weakly in $\ell^\infty([0,1])$ to a tight Gaussian process $W_{2}$ if and only if its non-standardized version defined in~\eqref{nonstand-sirexSavexProcesses} converges weakly, in the same space,  to the Gaussian process $\Sigma^{1/2}W_2\Sigma^{1/2}$.
    \end{enumerate}
  \end{proposition}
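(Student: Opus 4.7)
My strategy is a Slutsky-type argument: I will show that replacing $(m,\Sigma)$ by the empirical $(\hat m,\hat\Sigma)$ only introduces remainders of order $\sqrt{k/n}=o(1)$ after the $\sqrt k$ rescaling, and then invoke the continuous mapping theorem with the invertible linear maps $M\mapsto \Sigma^{1/2}M$ and $M\mapsto \Sigma^{1/2}M\Sigma^{1/2}$.

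First, I would fix $u \in (0,1]$ and decompose $\hat C_n^{\hat m}(u)$ and $\hat B_n^{\hat m,\hat\Sigma}(u)$ by expanding $X_i - \hat m = (X_i - m) - (\hat m - m)$ inside~\eqref{eq:empiricalCnBn-nonstd}. This produces, in view of the algebraic links~\eqref{eq:linkCnBn-nonstandard}, the identities
\begin{align*}
\hat C_n^{\hat m}(u) &= \Sigma^{1/2}\hat C_n(u) - (\hat m - m)\,A(u),\\
\hat B_n^{\hat m,\hat\Sigma}(u) &= \Sigma^{1/2}\hat B_n(u)\Sigma^{1/2} - (\hat\Sigma - \Sigma)\,A(u) \\
&\quad - \Sigma^{1/2}\hat C_n(u)(\hat m-m)^\top - (\hat m-m)\hat C_n(u)^\top \Sigma^{1/2} \\
&\quad + (\hat m-m)(\hat m-m)^\top A(u),
\end{align*}
where $A(u) := k^{-1}\sum_{i=1}^n \un\{\tilde Y_i \le \hat F^-(uk/n)\}$. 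From the definition of the empirical quantile one has $A(u) = \lceil uk\rceil/k$, hence $\sup_{u\in[0,1]} |A(u) - u|\le 1/k$ (see Fact~\ref{fact:cdf}).

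Next, I would bound each correction term uniformly in $u$. By~\eqref{eq:cvMeanVarianceRootN}, $\sqrt k\,\|\hat m - m\|$ and $\sqrt k\,\|\hat \Sigma - \Sigma\|$ are both $O_{\PP}(\sqrt{k/n}) = o_{\PP}(1)$. Moreover, the deterministic map $u \mapsto C_n(u)$ is uniformly bounded on $[0,1]$ (a consequence of the uniform integrability assumption (i) in Theorem~\ref{th:SIR_extreme}), and, assuming weak convergence of either the standardized or the non-standardized TIREX1 process, the algebraic link in Step~1 yields $\sup_u \|\hat C_n(u) - C_n(u)\| = O_{\PP}(1/\sqrt k)$; in particular $\sup_u \|\hat C_n(u)\| = O_{\PP}(1)$. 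Combining these estimates with $\sup_u A(u) \le 2$, every remainder in the two decompositions above is $o_{\PP}(1/\sqrt k)$ uniformly in $u\in [0,1]$.

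Finally, I would conclude that
\begin{align*}
\sqrt k\bigl(\hat C_n^{\hat m}(u) - C_n^m(u)\bigr) &= \Sigma^{1/2}\,\sqrt k\bigl(\hat C_n(u) - C_n(u)\bigr) + o_{\PP}(1),\\
\sqrt k\bigl(\hat B_n^{\hat m,\hat\Sigma}(u) - B_n^{m,\Sigma}(u)\bigr) &= \Sigma^{1/2}\,\sqrt k\bigl(\hat B_n(u) - B_n(u)\bigr)\,\Sigma^{1/2} + o_{\PP}(1),
\end{align*}
uniformly in $u$, and both equivalences of weak convergence in $\ell^\infty([0,1])$ follow from the continuous mapping theorem applied to the homeomorphisms $M\mapsto \Sigma^{1/2}M$ and $M\mapsto \Sigma^{1/2}M\Sigma^{1/2}$ (invertible since $\Sigma$ is positive definite), giving the stated limits $\Sigma^{1/2}W_1$ and $\Sigma^{1/2}W_2\Sigma^{1/2}$. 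The main delicate point lies in the second claim, where the quadratic structure of $\hat B_n^{\hat m,\hat\Sigma}$ generates cross terms of the form $(\hat m - m)\hat C_n(u)^\top\Sigma^{1/2}$; these are precisely controlled by the tightness of the (assumed) convergent TIREX1 process, so no hypothesis beyond~\eqref{eq:cvMeanVarianceRootN} and those of Theorem~\ref{th:SIR_extreme} is required.
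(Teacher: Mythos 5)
Your proof is correct and follows essentially the same route as the paper's: the same expansion of $X_i-\hat m$ (the paper phrases it via $Z_i+\Sigma^{-1/2}(m-\hat m)$, yielding the identical correction terms), the same $O_{\PP}(\sqrt{k/n})=o_{\PP}(1)$ bounds on the remainders using $\sup_u A(u)\le 1$ and, for TIREX2, $\sup_u\|\hat C_n(u)\|=O_{\PP}(1)$ from the assumed TIREX1 convergence, and the same conclusion via Slutsky and the continuous invertible maps $M\mapsto\Sigma^{1/2}M$ and $M\mapsto\Sigma^{1/2}M\Sigma^{1/2}$. Your explicit computation $A(u)=\lceil uk\rceil/k$ is a slightly sharper version of the paper's bound $\Delta_n(u)\le 1$, but plays the same role.
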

  \begin{proof}[Proof of Proposition~\ref{prop:weakCV-nonstd-sirexSavex}]~\\~
    \begin{enumerate}
    \item 
    Substituting $X-m$ with $\Sigma^{1/2}Z$ we obtain
    \begin{align}
      \hat C^{\hat m}(u) &= \frac{1}{k} \sum_{i=1}^n \Sigma^{1/2}(Z_i + m - \hat m)
                           \un\{\tilde Y_i \le  \hat F^-( uk/n) \}  \nonumber \\
                         & = \Sigma^{1/2} \left\{ \hat C_n(u) +  \Delta_n(u)  (m - \hat m) \right\}
                           \label{eq:decompose-HatCnm}
    \end{align}
    where $\hat C_n$ is defined in~\eqref{eq:hatCn} in terms of $Z$ and
    \begin{align}
      \Delta_n (u) &:= \frac{n}{k} \hat F\big( \hat F^-( u k/n)\big) 
      \le \frac{n}{k} \hat F\big( \hat F^-(  k/n)\big) 
                        = \frac{n}{k} \hat F(\tilde Y_{(k)}) 
       = 1       \label{eq:Delta-le-1}                   . 
      \end{align}
     Combining the  latter upper  bound, \eqref{eq:decompose-HatCnm} and~\eqref{eq:linkCnBn-nonstandard} we obtain 
      \begin{align}
        \sqrt{k}\left( \hat C_n^{\hat m} - C_n^m\right)
        &=   \Sigma^{1/2} \sqrt{k}( \hat C_n(u) - C_n(u)) + R_n(u), 
          \label{eq:equiv-nonstand-stand-Cn}
      \end{align}
      where $\sup_{u\in[0,1]} R_n(u) = O_{\PP}(\sqrt{k/n}) = o_{\PP}(1) $ and the main term $\sqrt{k}( \hat C_n(u) - C_n(u))$ is the standardized TIREX1 process. The first assertion of the statement follows from the Slutsky's lemma.

\item 
      The argument for the second order method is similar though the computation is more involved. We have   
      \begin{align*}
        \hat B_n^{\hat m, \hat\Sigma}(u)
        & =\Sigma^{1/2}\bigg\{ \frac{1}{k}\sum_{i\le n} \Big(\big(Z_i + \Sigma^{-1/2}(m - \hat m)\big)
          \big(Z_i + \Sigma^{-1/2}(m - \hat m)\big)^\top  - \Sigma^{-1/2} \hat \Sigma\Sigma^{-1/2}\Big) \times\dotsb\\
   &\qquad \qquad \dotsb       \un\{\tilde Y_i \le  \hat F^-( uk/n) \}\bigg\}\Sigma^{1/2}  \\
        &= \Sigma^{1/2}\left\{ \hat B_n(u) + A_{1,n}\Delta_n(u)  +
       A_{2,n}(u) \right\} \Sigma^{1/2}
      \end{align*}
      with $\Delta_n(u)\le 1$ as in~\eqref{eq:Delta-le-1} and
      \begin{align*}
        A_{1,n} & = \big(I_p - \Sigma^{-1/2}\hat\Sigma\Sigma^{-1/2}\big) +
                  \Sigma^{-1/2} (m - \hat m)(m - \hat m)^\top \Sigma^{-1/2}\,,  \\
        A_{2,n} &=   \Sigma^{-1/2}(m - \hat m)\hat C_n^\top(u) + \hat C_n(u)(m - \hat m)^\top \Sigma^{-1/2}.
      \end{align*}
      Under the assumption that the TIREX1 empirical process converges weakly we have that $\sup_u \hat C_n(u) = O_{\PP}(1)$, and using~\eqref{eq:cvMeanVarianceRootN} and~\eqref{eq:linkCnBn-nonstandard} we obtain
      \begin{align*}
        \sqrt{k}\Big(  \hat B_n^{\hat m, \hat\Sigma}(u) - B_n^{m,\Sigma}(u)\Big)
        &= \Sigma^{1/2} \sqrt{k}\Big(  \hat B_n(u) - B_n(u)\Big) \Sigma^{1/2} + R'_n(u)
      \end{align*}
      with $\sup_u R'_n(u) = O_{\PP}(\sqrt{k/n}) = o_{\PP}(1)$. The  second assertion follows. 
          \end{enumerate}
       \end{proof}

\end{document}